\newcommand{\Desc}[2]{\State \makebox[5em][l]{#1}#2}
\newcommand{\StatexIndent}[1][3]{%
  \setlength\@tempdima{\algorithmicindent}%
  \Statex\hskip\dimexpr#1\@tempdima\relax}
\newtheorem{theorem}{Theorem}
\newtheorem{lemma}{Lemma}
\newtheorem{conjecture}{Conjecture}
\def\eref#1{$(\ref{#1})$}
\def\sref#1{\S$\ref{#1}$}
\def\lref#1{Lemma~$\ref{#1}$}
\def\Lref#1{Line~$\ref{#1}$}
\def\tref#1{Theorem~$\ref{#1}$}
\def\Tref#1{Table~$\ref{#1}$}
\def\fref#1{Figure~$\ref{#1}$}
\def\aref#1{Procedure~$\ref{#1}$}
\def\cref#1{Conjecture~$\ref{#1}$}
\def\CC{\mathcal{C}}
\def\II{\mathcal{I}}
\def\SS{\mathcal{S}}
\def\RLS{\mathscr{R}}
\def\FF{\mathcal{F}}
\def\id{\iota}
\def\eps{\varepsilon}
\def\UU{\mathcal{U}}
\def\dfrac#1#2{\lower0.15ex\hbox{\large$\frac{#1}{#2}$}} 
\def\perm#1#2{\sigma_{#1,#2}}
\renewcommand{\geq}{\geqslant}
\renewcommand{\leq}{\leqslant}
\renewcommand{\ge}{\geqslant}
\renewcommand{\le}{\leqslant}
\DeclareMathOperator*{\Succ}{{\textsc{Succ}}}
\DeclareMathOperator*{\curt}{\tau}
\begin{document}

\title{Canonical labelling of Latin squares\\ in average-case polynomial time}

\author{
  Michael J. Gill\thanks{Research supported by an Australian Government Research Training Program (RTP) Scholarship.}, \ Adam Mammoliti \ and \ Ian M.\ Wanless\\
\small School of Mathematics\\[-0.75ex]
\small Monash University\\[-0.75ex]
\small Vic 3800, Australia\\[-0.75ex]
\small\tt \{michael.gill, adam.mammoliti, ian.wanless\}@monash.edu
}

\date{}

\maketitle

\begin{abstract}
A Latin square of order $n$ is an $n\times n$ matrix in which each row and column contains each of $n$ symbols exactly once. For $\eps>0$, we show that with high probability a uniformly random Latin square of order $n$ has no proper subsquare of order larger than $n^{1/2}\log^{1/2+\eps}n$. Using this fact we present a canonical labelling algorithm for Latin squares of order $n$ that runs in average time bounded by a polynomial in $n$.

The algorithm can be used to solve isomorphism problems for many combinatorial objects that can be encoded using Latin squares, including quasigroups, Steiner triple systems, Mendelsohn triple systems, $1$-factorisations, nets, affine planes and projective planes.
\end{abstract}


\section{Introduction}\label{s:intro}

A $k \times n$ \emph{Latin rectangle} is a $k\times n$ array with
entries from a set of size $n$, such that each symbol occurs exactly
once in each row and at most once in each column.  An $n\times n$
Latin rectangle is a \emph{Latin square}.  A Latin subrectangle is a
subarray that is a Latin rectangle.  A Latin subrectangle that is a
square is a \emph{Latin subsquare}.  The subsquare is \emph{proper} if
it has order at least 2 and is not the whole square.
A \emph{row cycle of length $k$} is a $2\times k$ Latin subrectangle
$R$ that is minimal with respect to containment. In other words, there
is no $k'<k$ such that $R$ contains a $2\times k'$ Latin subrectangle.

A finite \emph{quasigroup} $(Q,\circ)$ is a non-empty finite set $Q$
with a binary operation $\circ$ such that the multiplication table of
$Q$ is a Latin square. Throughout this paper we will consider
quasigroups and Latin squares to be interchangeable. The equation
$x\circ y = z$ between elements of a quasigroup denotes that the
entry in row $x$ and column $y$ is symbol $z$ in the corresponding
Latin square.

Two Latin squares $L_1$ and $L_2$ are \emph{isotopic} if $L_2$ can be
obtained from $L_1$ by applying a row permutation $\alpha$, a column
permutation $\beta$ and a symbol permutation $\gamma$ to $L_1$. For
such permutations the triple $(\alpha,\beta,\gamma)$ is an \emph{isotopism}
from $L_1$ to $L_2$.  If $(G,\circ)$ and $(H,*)$ are the quasigroups with
multiplication tables given by $L_1$ and $L_2$ respectively, then
$\alpha(x)*\beta(y) = \gamma(x\circ y)$ for all $x,y\in G$. If
$\alpha = \beta$, then the isotopism is an \emph{rrs-isotopism}.
If $\alpha = \beta = \gamma$, then the isotopism is an \emph{isomorphism}.
There are six \emph{conjugates} of a Latin square that are obtained
by permuting the three coordinates in its (row, column, symbol) triples.
A \emph{paratopism} is obtained by combining this operation with isotopism.
Isotopism, isomorphism and paratopism induce equivalence
relations and the equivalence classes are called \emph{isotopism classes},
\emph{isomorphism classes} and \emph{species}, respectively. Species
are also sometimes called main classes. The time complexity of testing whether
two Latin squares are in the same species is at most six (i.e.~a constant)
times the complexity of testing whether they are isotopic.

We will use $O(\cdot)$, $o(\cdot)$ and $\omega(\cdot)$ with
their standard asymptotic meanings (as $n\rightarrow\infty)$.
The first published results concerning the time complexity of
determining whether two Latin squares are isotopic were presented in a
conference paper by Miller \cite{Mil78}. Miller extended a result, which
he attributes to Tarjan, that identifying if two groups of order $n$
are isomorphic is computable in $n^{O(\log{n})}$ steps. He did so by
showing that a Latin square has a minimal generating set of
$O(\log{n})$ size.  Given two Latin squares $L$ and $L'$, his
algorithm is to find a generating set $A = \{a_1,\dots,a_m\}$ of size
$m\leq \log{n}$ in $L$, then for every set $B = \{b_1,\dots,b_m\}$ of
$m$ elements in $L'$, check to see if the map $a_i\mapsto b_i$ induces
an isomorphism. This solves Latin square/quasigroup isomorphism in
$n^{O(\log{n})}$ time. He also gave a polynomial time reduction of
isomorphism to isotopism, thereby giving the same complexity bound for
the latter problem. Recently, Dietrich and Wilson \cite{DW22}
showed that group isomorphism can be tested in polynomial time
for almost all orders. However, their techniques rely heavily
on features that are specific to groups. There is no reason to expect
analogous reasoning to apply to quasigroups.

Colbourn and Colbourn \cite{CC80} gave another algorithm for
quasigroup isomorphism by describing a restricted class of
$2^{O(\log^2{n})}$ relabellings for a quasigroup, from which a
canonical labelling can be selected. As each labelling can be computed
in polynomial time, this gives a time complexity of $2^{O(\log^2{n})}$
for their algorithm. They also noted that their algorithm runs in
polynomial time when the quasigroup contains no proper subquasigroups
of size greater than $c$, for some constant $c$. Colbourn and Colbourn
\cite{CC81} later proved that several generalised block designs are
\emph{graph isomorphism complete}, i.e.~can be used to model any
instance of graph isomorphism with at most polynomially more
work. Chattopadhyay, Tor\'{a}n and Wagner \cite{CTW13} proved that
graph isomorphism is not AC$^0$-reducible to quasigroup isomorphism
which implies that quasigroup isomorphism is strictly easier than
graph isomorphism unless both can be achieved in polynomial
time.  Recently, Levet \cite{Lev23} has extended these results to Latin
square isotopism, showing that graph isomorphism is not
AC$^0$-reducible to Latin square isotopism.  Gro\v sek and S\'ys
\cite{GS10} and Kotlar \cite{Kot12,Kot14} have observed empirically
that cycle structure can frequently be used to solve isotopism
problems for Latin squares.  In this work we give the first rigorous
proof of the effectiveness of such methods.

We present an algorithm that, given a Latin square $L$, produces a
labelling $(\alpha,\beta,\gamma)$ in such a way that the Latin square
$L'$ obtained by applying the isotopism $(\alpha,\beta,\gamma)$ to $L$
only depends on the isotopism class of $L$.  Such a labelling
$(\alpha,\beta,\gamma)$ is said to be \emph{canonical} and the Latin
square $L'$ is the \emph{(canonical) representative} of the isotopism
class of $L$. Our first main result is this:

\begin{theorem}\label{t:isotopytimeLS}
  Let $L$ be a random Latin square of order $n$.  Our algorithm
  determines the canonical labelling of $L$ in average-case $O(n^5)$
  time.  In the worst case, we can find the canonical labelling of $L$ in
  $O\big(n^{\log_2{n}-\log_2\ell(L)+3}\big)$ time, where $\ell(L)$
  denotes the length of the longest row cycle in $L$.
\end{theorem}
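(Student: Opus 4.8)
The plan is to prove the worst-case bound first and then derive the average-case bound from it by controlling the distribution of $\ell(L)$. Throughout, I view the algorithm as a backtracking search that builds the canonical labelling incrementally: it fixes successive pieces of the isotopism $(\alpha,\beta,\gamma)$, branching whenever several partial choices are tied under the invariants computed so far, and at each leaf it writes down a fully relabelled square and retains the lexicographically least. The running time is then (number of leaves of the search tree) $\times$ (work per leaf), where the work per leaf is the $O(n^{3})$ cost of applying a labelling and comparing; this accounts for the ``$+3$'' in the exponent.

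For the worst-case bound, the goal is to show the search tree has at most $n^{\log_2 n-\log_2\ell(L)+O(1)}$ leaves. First I would use the longest row cycle as an anchor: a row cycle of length $\ell=\ell(L)$ is a $2\times\ell$ minimal Latin subrectangle, and fixing the two rows it lies in, together with a starting column and an orientation, pins down a canonical cyclic ordering of $\ell$ columns and $\ell$ symbols; up to the ties forced by the symmetries of $L$ this anchoring costs only polynomially many branches. I then view the remainder of the algorithm as building a greedy tower of subsquares $H_0\subset H_1\subset\cdots\subset H_m=L$, where each level adjoins a new generator whose image is one of at most $n$ elements, so the tree branches by at most $n$ per level and has at most $n^{m}$ leaves. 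Because each $H_{i-1}$ is a \emph{proper} subsquare of $H_i$, and a proper Latin subsquare has order at most half that of its host, the orders at least double along the tower; seeding the tower with the block determined by the longest cycle, whose order I claim is $\Omega(\ell)$, leaves at most $m\le\log_2(n/\ell)+O(1)$ further levels. Multiplying $n^{m}$ by the $O(n^{3})$ per-leaf cost yields $O\big(n^{\log_2 n-\log_2\ell+3}\big)$.

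For the average-case bound I would combine this estimate with tail bounds on $\ell(L)$. Writing $T(L)$ for the running time and $X(L)=\log_2 n-\log_2\ell(L)\ge 0$, the worst-case bound reads $T(L)=O(n^{3})\,n^{X(L)}$, so it suffices to show $\E\big[n^{X(L)}\big]=O(n^{2})$ for a uniformly random $L$ of order $n$. I would prove this by a dyadic decomposition: for each integer $x\ge 0$ bound the contribution of the event $\{\ell(L)\le n/2^{x}\}$ by $n^{x}\,\Pr[\ell(L)\le n/2^{x}]$ and sum over $x$. A short longest row cycle is precisely the regime in which every pair of rows splits into short cycles, and the bridge to the probabilistic theorem is that this forces a large proper subsquare; feeding in the quantitative form of the no-large-subsquare result, which bounds $\Pr[\exists\text{ subsquare of order}\ge m]$ super-polynomially in the relevant range, should yield tails of the shape $\Pr[\ell(L)\le n/2^{x}]\lesssim n^{2-x}$. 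With such bounds the dyadic sum is dominated by its first few terms and is $O(n^{2})$, giving $\E[T(L)]=O(n^{5})$.

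The main obstacle is the average-case step, which hinges on two quantitative points. First, I must make precise the implication ``short longest row cycle $\Rightarrow$ large proper subsquare'' with an explicit dependence, since only then does a bound on $\ell(L)$ follow from the subsquare result; relatedly, I must justify the seed order $\Omega(\ell)$ used in the worst-case tower (transparent in the group case, where a maximal-order element generates a cyclic subsquare of order equal to the longest row cycle, but requiring an argument in general). Second, the high-probability statement alone does not suffice: to keep the expectation polynomial despite the $n^{X}$ blow-up on rare instances, I need the subsquare (equivalently row-cycle) tail probabilities to decay fast enough to beat the factor $n^{x}$ uniformly across all dyadic scales. Establishing these sharp tail bounds, rather than the mere whp statement, is where the real work lies; the tree-size estimate and per-leaf bookkeeping are comparatively routine.
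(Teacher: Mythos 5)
Your worst-case analysis is essentially the paper's own argument: branch at most $n$ ways per level, use the fact that each proper subsquare has at most half the order of the next one in the chain (\lref{lem:symdouble}) to bound the depth by $\log_2(n/\ell(L))+O(1)$, seed the chain with a subsquare of order at least $\ell(L)$ coming from the first labelled row cycle, and multiply by a polynomial per-leaf cost (\lref{lem:polyextend}). That part is sound and matches \tref{t:timetoextendpartiallabelling}.

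The average-case step, however, contains a genuine gap, and it is not merely a matter of supplying sharper tail estimates: the reduction itself cannot work. You bound $T(L)$ by $O(n^{3})\,n^{X(L)}$ with $X(L)=\log_2 n-\log_2\ell(L)$ and then try to control $\E[n^{X(L)}]$ via tail bounds on $\ell(L)$. But this requires $\ell(L)\ge n/\mathrm{poly}(n)$ with superpolynomially small failure probability: already the single event $\{\ell(L)\le n^{3/4}\}$ contributes $n^{(\log_2 n)/4}\,\Pr[\ell(L)\le n^{3/4}]$, which is superpolynomial unless that probability is $n^{-\Omega(\log n)}$. Nothing of this strength is available: \tref{t:longcyc} only guarantees $\ell(L)\ge n^{1/2}\log^{1-\eps}n$ with high probability, and the statement that $\ell(L)=n$ for almost all $L$ is left as a conjecture in \sref{s:con}. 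Your proposed bridge --- ``short longest row cycle $\Rightarrow$ large proper subsquare'' --- is unsupported and false as a deterministic implication (a row cycle of length $k>2$ is not a subsquare, and by \tref{t:nolargecycles} there are Latin squares of every order all of whose row cycles have length at most $182$), so \tref{t:largeSS} cannot be converted into the tail bounds you need. The paper's actual argument (\tref{t:polycanonical}) uses the two probabilistic inputs in the opposite direction: with probability $1-\exp(-\Omega(n^{1/2}))$ the longest row cycle, of length at least $n^{1/2}\log^{1-\eps}n$, is \emph{longer than} every proper subsquare, whose order is at most $n^{1/2}\log^{1/2+\eps}n$. Consequently the first labelled row cycle lies in no proper subsquare, the first call to \textsc{Extend} already labels all of $L$, and the search tree has depth $1$ regardless of how far $\ell(L)$ is from $n$; this gives $O(n^{2})\cdot n\cdot O(n^{2})=O(n^{5})$ on the good event, while the bad event of probability $\exp(-\frac14 n^{1/2})$ is crushed by the worst-case bound $n^{O(\log n)}=\exp(O(\log^{2}n))$. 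The quantity that governs the average case is not the ratio $n/\ell(L)$ but the comparison between $\ell(L)$ and the order of the largest proper subsquare, and that is the idea missing from your proposal.
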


In the above theorem and throughout this paper, any reference to a
random object will implicitly assume that the underlying distribution
is the discrete uniform distribution.  One can determine whether two
Latin squares are isotopic by finding their canonical labellings and
checking if those are equal. It follows from the above result that we
can determine whether two independent random Latin squares are
isotopic in $O(n^5)$ average-case time and $O(n^{\log_2{n}+3})$ time
in the worst case. Our worst case performance is no better than
previous algorithms, but this is the first proof of polynomial-time
average-case performance.

In the process of proving \tref{t:isotopytimeLS} we show the following
result of independent interest. 

\begin{theorem}\label{t:largeSS}
  Let $\eps>0$. With probability $1-\exp(-\omega(n\log^2 n))$,
  a random $n\times n$ Latin square has no proper subsquare of
  order larger than $n^{1/2}\log^{1/2+\eps}n$. 
\end{theorem}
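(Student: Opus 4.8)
The plan is to use the first moment method. For $2 \le k \le n/2$ let $X_k$ denote the number of proper subsquares of order $k$ in a uniformly random Latin square $L$ of order $n$ (recall that a subsquare forces $n \ge 2k$, so this range is exhaustive). Writing $k_0 = n^{1/2}\log^{1/2+\eps}n$, a union bound gives
\[ \P\big(\text{$L$ has a proper subsquare of order} > k_0\big) \le \sum_{k_0 < k \le n/2} \E[X_k]. \]
By symmetry and linearity of expectation, $\E[X_k] = \binom nk^3 p_k$, where $p_k$ is the probability that a fixed triple $(R,C,S)$ of $k$ rows, $k$ columns and $k$ symbols supports a subsquare; equivalently $p_k = L_k(n)/L(n)$, where $L(n)$ is the number of Latin squares of order $n$ and $L_k(n)$ is the number having a subsquare on $(R,C,S)$. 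Thus everything reduces to an upper bound on this ratio.

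To bound $p_k$, I would exploit the block decomposition forced by a subsquare: if $R\times C$ uses only the symbols $S$, then rows $R$ confine $S$ to columns $C$ and columns $C$ confine $S$ to rows $R$, so $L$ splits into an order-$k$ Latin square on $R\times C$, two Latin rectangles on $R\times\bar C$ and $\bar R\times C$ (using symbols $\bar S$), and a completion on $\bar R\times\bar C$. I would count completions row by row as products of permanents of $0$/$1$ availability matrices, bounding $L(n)$ from below by the van der Waerden--Egorychev--Falikman inequality and the constrained count $L_k(n)$ from above by Br\'egman's theorem. The net effect of forcing $k^2$ cells to lie in a set of $k$ rather than $n$ symbols is a loss of roughly a factor $k/n$ per such cell, yielding a bound of the shape
\[ p_k \le \Big(\tfrac kn\Big)^{c\,k^2} = \exp\!\big(-c\,k^2\log(n/k)\big) \]
for some absolute constant $c>0$. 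As a sanity check, for $k=2$ this reproduces the correct order $\binom n2^3 p_2 = \Theta(n^2)$ for the expected number of intercalates. It is worth stressing that only a positive constant $c$ is needed, not its optimal value, but that the logarithmic factor $\log(n/k)$ in the exponent is essential.

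It then remains to sum the tail. Since $\binom nk^3 \le \exp\!\big(3k\log(en/k)\big)$, we get $\log \E[X_k] \le 3k\log(en/k) - c\,k^2\log(n/k)$, which is decreasing in $k$ for $k \ge k_0$, so the tail sum is dominated by its term at $k=k_0$ up to the factor $n$ counting the summands. At $k=k_0$ we have $\log(n/k_0) = (\tfrac12+o(1))\log n$ and $k_0^2 = n\log^{1+2\eps}n$, whence the negative term is $(\tfrac12 c + o(1))\,n\log^{2+2\eps}n$, while the entropy term $3k_0\log(en/k_0) = O\!\big(n^{1/2}\log^{3/2+\eps}n\big)$ and the factor $n$ are both negligible beside it. Thus $\sum_{k>k_0}\E[X_k] \le \exp\!\big(-\Omega(n\log^{2+2\eps}n)\big) = \exp\!\big(-\omega(n\log^2 n)\big)$, which is exactly the claimed bound; here the slack of $\log^{1/2+\eps}n$ above $n^{1/2}$ is precisely what promotes the exponent from $\Theta(n\log^2 n)$, attained near $k\approx n^{1/2}\log^{1/2}n$, up to $\omega(n\log^2 n)$.

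The main obstacle is the uniform upper bound on $p_k$: the permanent estimates for the constrained completion must be controlled simultaneously for all $k$ in the range $k_0 < k \le n/2$, and one must verify that the lower-order factors accumulated across the $n$ rows of the block construction, together with the interaction between the four blocks (especially the completion on $\bar R\times \bar C$), do not erode the $\exp\!\big(-\Omega(k^2\log(n/k))\big)$ saving. Everything else --- the union bound, the entropy estimate, and the final summation --- is routine once this ratio bound is in hand.
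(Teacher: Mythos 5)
Your overall architecture (first moment method, union bound over orders $k_0<k\le n/2$, target bound $p_k\le\exp(-\Omega(k^2\log(n/k)))$, and the closing tail computation showing the exponent is $-(\frac{c}{2}+o(1))n\log^{2+2\eps}n=-\omega(n\log^2 n)$ at $k=k_0$) matches the paper in spirit, and your final summation is correct \emph{given} the ratio bound. But the ratio bound is the theorem: the entire technical content lies in proving $L_k(n)/L(n)\le\exp(-ck^2\log(n/k))$ uniformly for all $k$ up to $n/2$, and you leave this as a plan whose main obstacles you yourself identify (controlling the permanent estimates across all $n$ rows and the four-block interaction, especially the completion on $\bar R\times\bar C$). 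That block is not a Latin rectangle but a frequency-constrained array in which each row and column must contain all of $S$ plus a prescribed subset of $\bar S$; obtaining an upper bound for it via Br\'egman that is tight to within $\exp(O(n\log^2 n))$ --- which is the accuracy you need, since your saving at $k=k_0$ is only $\Theta(n\log^{2+2\eps}n)$ --- is not routine, and nothing in the proposal shows it can be done. So there is a genuine gap at the central step.

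The paper sidesteps exactly this difficulty by never estimating counts of full $n\times n$ Latin squares with a prescribed subsquare. Instead it uses \tref{t:extensions} (any two $k\times n$ Latin rectangles have the same number of completions up to $\exp(O(n\log^2n))$) to reduce the whole problem to the probability $P_{k,n,k}$ that a uniformly random $k\times n$ Latin \emph{rectangle} contains an order-$k$ subsquare; this removes the troublesome $\bar R\times\bar C$ block entirely. It then splits into two regimes: for $k=\alpha n$ with $\alpha$ bounded below it compares a crude upper bound on rectangles containing a subsquare (using \tref{t:uppbndLS} for the subsquare and $(n-k)!^k$ for the rest) against the lower bound of \tref{t:lowbndLR}, reducing to the elementary inequality $\theta(\alpha)<0$; for $n^{1/2}\log^{1/2+\eps}n\le k<\beta n$ it invokes the Godsil--McKay estimate (\tref{t:randLR}) that a specific $k\times k$ subsquare occurs in a random $k\times n$ rectangle with probability $n^{-k^2}\exp(O(k^3/(n-3k+1)))$, which delivers precisely the $(k/n)^{(1+o(1))k^2}$ saving you are after with $c=1$. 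If you want to salvage your permanent-based plan, the realistic route is to import these two tools rather than rederive the ratio from scratch; as written, the proposal asserts the key estimate rather than proving it.
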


After submitting our paper we learnt that Divoux, Kelly, Kennedy and
Sidhu \cite{DKKS24} had recently obtained \tref{t:largeSS}, but with
$\eps=0$.  It was conjectured in \cite{MW99} (see also \cite{KSS21})
that a random Latin square almost surely has no proper subsquare of
order greater than 3.  \tref{t:largeSS} and \cite{DKKS24} represent
the first significant progress towards that conjecture, although there
is still much room for improvement.

\tref{t:largeSS} is proved in \sref{s:Latin}. Our main algorithm is
presented in \sref{s:canonicalLS} and then we prove
\tref{t:isotopytimeLS}.
The remainder of the paper discusses adaptations of our main algorithm
to solve isomorphism problems for classes of other combinatorial objects 
that can be encoded using Latin squares. We discuss two such classes in
some detail: Steiner triple systems in \sref{s:STS} and
$1$-factorisations of complete graphs in \sref{s:fact}. Then in
\sref{s:losotros} we discuss a number of other objects more briefly,
including nets, affine planes, projective planes and Mendelsohn triple
systems.  For all these cases, our worst-case complexity does not
improve on previous results. In several cases we make mild conjectures
about the structure of random objects in a class which, if true, would
show that our algorithm runs in average-case polynomial time.  We end
the paper with some concluding comments in \sref{s:con}. This will
include some discussion of the fact that we believe our algorithms are
actually faster than the bounds we prove for them.

\section{Large subsquares}\label{s:Latin}

In this section we prove \tref{t:largeSS}. Before doing so, we provide
a brief history of results related to subsquares in Latin squares. The
number of \emph{intercalates} (subsquares of order $2$) is of particular
interest.  A simple counting argument shows that a Latin square of
order $n$ can have no more than $\frac{1}{4}n^2(n-1)$ intercalates and
it is well known that the Cayley tables of elementary abelian 2-groups
are (up to isotopism) the only Latin squares that achieve this bound.

McKay and Wanless~\cite{MW99} showed that most Latin squares contain
many intercalates.  They conjectured that for any $\eps>0$ a random
Latin square will almost surely have between $(1-\eps)\mu_n$ and
$(1+\eps)\mu_n$ intercalates, where $\mu_n=n(n-1)/4$. A partial
confirmation of this conjecture was given in \cite{KS18} and the full
conjecture was finally proven by Kwan, Sah and Sawhney \cite{KSS21}
with the following.

\begin{theorem}\label{t:KSS21}
  A random Latin square contains $(1-o(1))n^2/4$ intercalates with
  probability $1-o(1)$.
\end{theorem}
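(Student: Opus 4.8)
The plan is to establish concentration of the intercalate count via the second moment method. Let $X$ denote the number of intercalates of a uniformly random Latin square $L$ of order $n$, and write $X = \sum_{r_1<r_2}\sum_{c_1<c_2}\mathbf 1_{r_1r_2c_1c_2}$, where $\mathbf 1_{r_1r_2c_1c_2}$ is the indicator that the $2\times 2$ submatrix of $L$ on rows $r_1,r_2$ and columns $c_1,c_2$ is a Latin subsquare (equivalently $L(r_1,c_1)=L(r_2,c_2)$ and $L(r_1,c_2)=L(r_2,c_1)$). There are $\binom{n}{2}^2=(1+o(1))n^4/4$ terms. First I would pin down the expectation. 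By the row/column/symbol symmetry of the uniform distribution every indicator has the same mean $p_n:=\P(\mathbf 1_{r_1r_2c_1c_2}=1)$, so $\E[X]=\binom{n}{2}^2 p_n$. To evaluate $p_n$ I would use a switching argument: estimate the ratio of the number of Latin squares in which four prescribed cells form an intercalate to the total number $L_n$ of Latin squares, by exhibiting local modifications (intercalate switches and row-cycle rotations, in the sense defined above) that create or destroy the prescribed configuration and counting them. The heuristic that the two required symbol coincidences each occur with probability $(1+o(1))/n$ should yield $p_n=(1+o(1))/n^2$, hence $\E[X]=(1+o(1))n^2/4$, matching $\mu_n$.

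The core of the argument is the variance bound $\mathrm{Var}(X)=o(\E[X]^2)=o(n^4)$, after which Chebyshev's inequality gives, for every fixed $\eps>0$, $\P(|X-\E[X]|>\eps\E[X])\le \mathrm{Var}(X)/(\eps^2\E[X]^2)=o(1)$, which is exactly the claim. I would expand $\mathrm{Var}(X)=\sum_{I,J}\mathrm{Cov}(\mathbf 1_I,\mathbf 1_J)$ over ordered pairs of position-quadruples $I,J$ and stratify the sum by the overlap of $I$ and $J$: whether they share $0$, $1$ or $2$ rows, and likewise columns, and whether the forced symbols coincide. The dominant "generic" stratum consists of quadruples sharing no rows, columns or symbols; here the goal is to show $\mathrm{Cov}(\mathbf 1_I,\mathbf 1_J)=o(p_n^2)$ on average, so the generic contribution is $(1+o(1))\E[X]^2$ and cancels against $\E[X]^2$. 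The remaining strata have strictly fewer free coordinates, so the number of pairs in each is $O(n^7)$ or smaller, and even a crude bound $\mathrm{Cov}(\mathbf 1_I,\mathbf 1_J)\le\P(\mathbf 1_I=\mathbf 1_J=1)$ together with a conditional-probability estimate of order $O(n^{-4})$ should make each such stratum contribute $O(n^3)=o(n^4)$.

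The hard part will be controlling the joint probabilities $\P(\mathbf 1_I=\mathbf 1_J=1)$ in the overlapping strata and, more delicately, proving near-independence in the generic stratum. This is precisely the dependency structure of random Latin squares that makes the problem difficult, and is the reason the sharp asymptotic resisted proof until \cite{KSS21}: unlike in a binomial random graph, the entries of $L$ are heavily constrained, since each row and column is a permutation, so correlations between potential intercalates do not decay for any trivial reason. The requisite conditional-probability estimates would be obtained by switching operations that repair the global Latin constraint after locally imposing or forbidding the configurations prescribed by $I$ and $J$, combined with sharp enumeration bounds for $L_n$ (for instance via permanent estimates) to convert counts into probabilities.

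Making these switching estimates simultaneously valid and sufficiently precise across all overlap strata, especially the symbol-sharing ones where the joint probability can be enhanced, is the principal technical burden of the second moment route. I would expect the bookkeeping to be delicate but the strategy to be structurally sound. It is worth noting that the more robust method used in \cite{KSS21} replaces the bare second moment computation with an entropy and coupling argument that yields a full large-deviation statement; that approach subsumes the weaker concentration asserted here, and in particular avoids having to prove the generic covariances vanish by a direct calculation.
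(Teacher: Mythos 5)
This statement is not proved in the paper at all: it is quoted verbatim as a known result of Kwan, Sah and Sawhney \cite{KSS21}, so there is no internal proof to compare your argument against. Judged on its own terms, your proposal has a genuine gap. The entire weight of the second moment method rests on the variance bound $\mathrm{Var}(X)=o(\E[X]^2)$, and in your write-up this is stated as a goal rather than established. In particular, the claim that $\mathrm{Cov}(\mathbf 1_I,\mathbf 1_J)=o(p_n^2)$ on average over the generic (disjoint) stratum is precisely the asymptotic pairwise independence of intercalate indicators in a uniformly random Latin square, and you offer no mechanism for proving it beyond naming ``switchings'' and ``permanent estimates.'' Because the entries of a random Latin square are globally dependent, with no product structure, this near-independence is not a routine verification; it is essentially of the same difficulty as the theorem itself, which is why you correctly observe that the result resisted proof until \cite{KSS21}. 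A proof proposal that reduces the statement to an unproved claim of comparable depth is not a proof.

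A concrete way to see that the gap cannot be closed with the tools available in this paper: the quantitative inputs the paper supplies (Theorems~\ref{t:uppbndLS}, \ref{t:lowbndLR}, \ref{t:extensions}, \ref{t:randLR}) carry multiplicative errors of size $\exp(O(n\log^2 n))$, or at best $1+O(1/n)$ within a fixed $k\times n$ rectangle. Passing from a $2\times n$ rectangle to the full square via Theorem~\ref{t:extensions} already costs a factor $\exp(O(n\log^2 n))$, which obliterates the $(1+o(1))$ precision you need for $p_n$, let alone the second-order cancellation of size $o(n^4)$ required in the covariance sum. The actual proof in \cite{KSS21} does not proceed by a bare second moment computation for exactly this reason; it uses a large-deviation framework with conditioning and coupling arguments. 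Your first-moment heuristic and the stratification of the covariance sum are sensible scaffolding, but as written the proposal does not constitute a proof of the theorem.
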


\tref{t:KSS21} is part of a more general theory on
large deviations of the number of intercalates. 
Kwan, Sah, Sawhney and Simkin \cite{KSSS22}, found
the following bound on the number of Latin squares of
order $n$ avoiding any intercalates.

\begin{theorem}
  The number of order $n$ Latin squares with no intercalates is at least
  $\left(e^{-9/4}n-o(n)\right)^{n^2}$.
\end{theorem}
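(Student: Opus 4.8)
The plan is to prove the lower bound by the entropy method, realising intercalate-free Latin squares as the support of a random greedy process and bounding the number of choices made along the way (all logarithms here are natural). Let $N$ denote the number of intercalate-free Latin squares of order $n$. For \emph{any} random variable $\mathbf{L}$ supported on this set we have $\log N \geq H(\mathbf{L})$, where $H$ is the Shannon entropy, so it suffices to exhibit one such $\mathbf{L}$ of entropy at least $n^2(\log n - \tfrac94) - o(n^2)$. I would build $\mathbf{L}$ one row at a time: having generated a $k\times n$ Latin rectangle with no intercalate, choose the $(k+1)$-st row uniformly at random among all permutations that both extend the rectangle to a Latin rectangle and create no intercalate with the first $k$ rows. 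If $D_k$ denotes the (random) number of such extensions, then by the chain rule, since each step is a uniform choice, $H(\mathbf{L}) = \sum_{k=0}^{n-1}\mathbb{E}\big[\log D_k\big]$. As a sanity check, dropping the intercalate constraint and applying the van der Waerden--Egorychev--Falikman permanent bound to the $(n-k)$-regular availability bipartite graph gives $\log D_k \geq \log n! + n\log\frac{n-k}{n}$, and summing recovers the known estimate $\log L_n = n^2(\log n - 2) + o(n^2)$; the whole task is therefore to show that forbidding intercalates costs only a further $\tfrac14 n^2 + o(n^2)$ in the exponent.

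The subtlety is that intercalate-avoidance for the new row is not the deletion of single entries from the availability matrix but a constraint on \emph{pairs} of positions: an intercalate with an earlier row $i$ forms exactly when the new row simultaneously sends a column $c$ to symbol $L(i,c')$ and the column $c'$ to symbol $L(i,c)$. Thus $D_k$ counts permutations, drawn from the allowed symbol sets, that avoid a sparse family of forbidden position-pair patterns. I would estimate $D_k$ by comparing it with the unconstrained count $\mathrm{per}(A_k)$, where $A_k$ is the availability matrix: in a uniformly random extension the number of intercalates created with the first $k$ rows has some mean $\lambda_k$, and a Janson- or local-lemma-type lower bound for the probability of avoiding all of these rare, weakly dependent events should give $D_k \geq \mathrm{per}(A_k)\,e^{-(1+o(1))\lambda_k}$, hence $\mathbb{E}[\log D_k] \geq \log n! + n\log\frac{n-k}{n} - (1+o(1))\lambda_k$.

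To finish, I would sum over $k$ and invoke \tref{t:KSS21}. Each intercalate of the final square is created at the step that adds its lower row, so $\sum_{k} \lambda_k$ equals the expected total number of intercalates, which is $(1+o(1))n^2/4$. This yields $H(\mathbf{L}) \geq \log L_n - (1+o(1))n^2/4 = n^2(\log n - \tfrac94) - o(n^2)$, and therefore $N \geq (e^{-9/4}n - o(n))^{n^2}$, as claimed. The constant $\tfrac14$ matches the Poisson heuristic: it is precisely the $e^{-\mu_n}$ factor one expects when forcing all $\mu_n \approx n^2/4$ intercalates to be absent.

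The main obstacle is making the per-row estimate rigorous and uniform. Three points need care. First, both $\mathrm{per}(A_k)$ and the intercalate means $\lambda_k$ depend on the fine structure of the partial square, so one must show that the rectangle produced by the process stays quasirandom enough (for instance, that every pair of symbols is suitably spread across the columns) for these quantities to match their predicted values at all relevant $k$; this probably forces a truncated process that aborts when quasirandomness fails, together with an argument that aborting is rare enough to cost only $o(n^2)$ entropy, and in particular that the process almost surely reaches a complete intercalate-free square. Second, the permutation-avoidance bound of the second paragraph is the genuinely hard analytic step: one needs a lower bound on the permanent of the availability matrix restricted to intercalate-free rows that loses only the Poisson factor $e^{-\lambda_k}$, which I would attack via a second-moment or Janson argument after establishing that the forbidden pairs behave like a nearly independent, low-intensity point process. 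Third, all error terms accumulated over the $n$ steps must be shown to sum to $o(n^2)$. I expect the second point -- a sharp lower tail for a ``no forbidden pattern'' permanent, uniform over the random rectangle -- to be the crux.
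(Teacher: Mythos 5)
First, a point of comparison: the paper does not prove this statement. It is quoted from Kwan, Sah, Sawhney and Simkin \cite{KSSS22}, where the lower bound is obtained by combining the enumeration $L_n=((1+o(1))e^{-2}n)^{n^2}$ with a lower bound of $e^{-(1/4+o(1))n^2}$ on the probability that a uniformly random Latin square is intercalate-free; that large-deviation lower bound is the main technical content of their work and is substantially harder than anything appearing in the present paper. So there is no in-paper proof to compare against, and your proposal must stand on its own.

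As written, it does not yet stand. The entropy framework and the Poisson heuristic (losing exactly $e^{-\mu_n}$ with $\mu_n\approx n^2/4$, hence $-\tfrac14 n^2$ in the exponent) are the right intuition, but the step you yourself flag as the crux --- $D_k\ge \mathrm{per}(A_k)\,e^{-(1+o(1))\lambda_k}$ with constant exactly $1$ in the exponent, uniformly over the rectangles produced by the constrained process --- is where essentially all of the difficulty lives, and neither Janson's inequality (whose standard form requires a product space, not a uniformly random permutation) nor the local lemma (which loses a constant factor in the exponent, which is fatal here because the precise constant $9/4$ is the entire content of the theorem) supplies it off the shelf. Two further gaps. (i) You cannot invoke Theorem~\ref{t:KSS21} to conclude $\sum_k\lambda_k=(1+o(1))n^2/4$: that theorem concerns a uniformly random Latin square, whereas your $\lambda_k$ is an expectation over unconstrained extensions of a rectangle whose law has been tilted by the conditioning at every earlier step. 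You would instead need to compute $\lambda_k\approx k/2$ directly from a quasirandomness invariant of the process and prove that the invariant is maintained throughout. (ii) The endgame is unaddressed: for $k$ close to $n$ the permanent bound degenerates and it is not even clear that an intercalate-free extension exists, so the process may abort; handling the last $\varepsilon n$ rows typically requires a separate absorption or switching argument. In short, this is a plausible heuristic outline of one known route to the bound, but it defers, rather than supplies, the proof of its central lemma.
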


Much less is known about the distributions of subsquares larger than
intercalates.  In \cite{MW99} it is conjectured that the expected
number of subsquares of order $3$ in a random Latin square is $1/18$
and that most Latin squares do not contain subsquares of order greater
than or equal to $4$. Limited progress has been made for this
conjecture, apart from \cite{DKKS24} and \tref{t:largeSS}.

More progress has been made towards understanding extreme cases for
the number of Latin subsquares.  Let $\zeta(n,m)$ be the largest number
of subsquares of order $m$ achieved by any Latin square of order $n$.
For example, a result mentioned above is that
$\zeta(n,2) \leq \frac{1}{4}n^2(n-1)$. More generally, \cite{BCW14}
found that $\zeta(n,2^d)$ is maximised by the Cayley table of the
elementary abelian 2-group of order $n$.  It was shown by van
Rees~\cite{Ree90} that $\zeta(n,3)\leq \frac{1}{18}n^2(n-1)$, although
here there are many examples achieving equality \cite{KW15}.  In
\cite{BCW14} it was shown that
$\zeta(n,2)\geq\frac{1}{8}n^3-O(n^{2})$,
$\zeta(n,3)\geq\frac{1}{27}n^3-O(n^{5/2})$ and that
$\zeta(n,m)\leq O(n^{3+\lfloor\log_2(m/3)\rfloor})$ for any constant $m$.
This last result was misstated in \cite{BCW14} with $n$ in place of $m$.
It improved on earlier bounds of
$\zeta(n,m)\leq O(n^{\sqrt{2m}+2})$ from \cite{BVW10} and
$\zeta(n,m)\leq n^{\lceil \log_2 m\rceil +2}$ in \cite{BSW13}.

It was a long standing conjecture of Hilton that there exists a Latin
square of every order $>6$ that does not contain a proper subsquare of
any size. Hilton's conjecture has recently been proved \cite{AW24}.

The remainder of the section is devoted to proving \tref{t:largeSS}. 
We will use the following results (see e.g.~McKay and Wanless \cite{MW99}).

\begin{theorem}\label{t:uppbndLS}
  The number of Latin squares of order $n$ is at most
  \[\exp(n^2\log n-2n^2+O(n\log^2 n)).\]
\end{theorem}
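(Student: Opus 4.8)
The plan is to count Latin squares by building them one row at a time and bounding the number of extensions at each step by a permanent inequality. Write $L(k)$ for the number of $k\times n$ Latin rectangles, so that $L(0)=1$ and $L(n)$ is the quantity we want to bound. Given any $k\times n$ Latin rectangle, the number of ways to append an admissible $(k+1)$st row is the permanent of the $n\times n$ zero-one matrix $A$ whose $(i,j)$ entry is $1$ precisely when symbol $j$ has not yet been used in column $i$. Each column of the rectangle is missing exactly $n-k$ symbols, and each symbol is missing from exactly $n-k$ columns, so every row sum and every column sum of $A$ equals $n-k$.

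To bound each such permanent I would apply Bregman's inequality: for a zero-one matrix with row sums $r_1,\dots,r_n$ the permanent is at most $\prod_{i}(r_i!)^{1/r_i}$. With every row sum equal to $n-k$ this gives $\operatorname{per}(A)\le\bigl((n-k)!\bigr)^{n/(n-k)}$, a bound that holds uniformly over all $k\times n$ rectangles. Hence $L(k+1)\le L(k)\cdot\bigl((n-k)!\bigr)^{n/(n-k)}$, and telescoping yields
\[
  L(n)\le\prod_{k=0}^{n-1}\bigl((n-k)!\bigr)^{n/(n-k)}=\prod_{j=1}^{n}(j!)^{n/j},
\]
after reindexing $j=n-k$.

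It remains to estimate this product asymptotically, which I would do by taking logarithms and invoking Stirling's formula in the form $\log(j!)=j\log j-j+O(\log j)$. Each summand becomes $\tfrac{n}{j}\log(j!)=n\log j-n+O\bigl(\tfrac{n\log j}{j}\bigr)$. Summing the leading terms, $\sum_{j=1}^{n}n\log j=n\log(n!)=n^2\log n-n^2+O(n\log n)$ by a second use of Stirling, while $\sum_{j=1}^{n}(-n)=-n^2$, so together the main terms contribute $n^2\log n-2n^2+O(n\log n)$.

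The only point needing care—and the sole genuine obstacle—is controlling the accumulated error. The Stirling error sums to $O\bigl(n\sum_{j=1}^{n}\tfrac{\log j}{j}\bigr)$, and since $\sum_{j\le n}\tfrac{\log j}{j}=\tfrac12\log^2 n+O(1)$, this contributes exactly $O(n\log^2 n)$, which is precisely the error term appearing in the statement. Combining all three contributions gives $\log L(n)\le n^2\log n-2n^2+O(n\log^2 n)$, and exponentiating yields the claimed upper bound.
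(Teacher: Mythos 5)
Your proof is correct, and it is essentially the standard argument behind this bound: the paper itself does not prove Theorem~\ref{t:uppbndLS} but cites it from McKay and Wanless \cite{MW99}, where the bound rests on exactly the row-by-row extension count $\prod_{j=1}^{n}(j!)^{n/j}$ obtained from Bregman's (Minc conjecture) permanent inequality, followed by the same Stirling-type asymptotics. Your handling of the error term via $\sum_{j\le n}\frac{\log j}{j}=\frac12\log^2 n+O(1)$ is exactly what produces the stated $O(n\log^2 n)$, so nothing is missing (only a cosmetic point: write the Stirling error as $O(1+\log j)$ so it is also valid at $j=1$).
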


\begin{theorem}\label{t:lowbndLR}
  The number of $k\times n$ Latin rectangles is at least
  \[
  \prod_{i=0}^{k-1}n!\Big(\frac{n-i}{n}\Big)^n
  =\frac{n!^k}{n^{kn}}\bigg(\frac{n!}{(n-k)!}\bigg)^n.
  \]
\end{theorem}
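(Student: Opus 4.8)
The plan is to build a $k\times n$ Latin rectangle one row at a time, and to bound from below the number of admissible choices for each new row by a permanent. Suppose rows $1,\dots,i$ have already been placed, for some $0\le i\le k-1$. Form the $n\times n$ matrix $A^{(i)}$ whose $(c,s)$ entry is $1$ precisely when symbol $s$ has not yet appeared in column $c$ among the first $i$ rows, and $0$ otherwise. A valid row $i+1$ is exactly a choice of one available symbol per column with no symbol repeated, i.e.\ a permutation $\pi$ with $A^{(i)}_{c,\pi(c)}=1$ for every column $c$; the number of such permutations is $\operatorname{per}\!\big(A^{(i)}\big)$.

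First I would record the regularity of $A^{(i)}$. In each column exactly $i$ symbols are forbidden (one contributed by each placed row), so each column has exactly $n-i$ available symbols. Dually, by the defining property of a Latin rectangle each symbol occurs exactly once in each of the $i$ rows and at most once per column, so each symbol is forbidden in exactly $i$ columns and available in $n-i$. Hence every row and every column of $A^{(i)}$ sums to $n-i$.

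Next I would invoke the van der Waerden bound, i.e.\ the Egorychev--Falikman theorem. After normalising, the matrix $B^{(i)}:=A^{(i)}/(n-i)$ is doubly stochastic, so $\operatorname{per}\!\big(B^{(i)}\big)\ge n!/n^n$, giving
\[
\operatorname{per}\!\big(A^{(i)}\big)=(n-i)^n\operatorname{per}\!\big(B^{(i)}\big)\ge n!\Big(\frac{n-i}{n}\Big)^{\!n}.
\]
Multiplying these lower bounds over $i=0,1,\dots,k-1$, since the extensions compose independently across rows, yields at least $\prod_{i=0}^{k-1}n!\big((n-i)/n\big)^n$ Latin rectangles. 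The stated closed form then follows by collecting the $n!$ and $n^{-n}$ factors and telescoping $\prod_{i=0}^{k-1}(n-i)^n=\big(n!/(n-k)!\big)^n$.

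The only substantial ingredient is the permanent inequality for doubly stochastic matrices; everything else is bookkeeping. Note that the argument needs $n-i\ge1$ throughout, which holds since $i\le k-1\le n-1$, and that the positivity of the bound simultaneously certifies that each partial rectangle does extend, so the greedy construction never gets stuck.
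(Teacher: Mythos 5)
Your proof is correct and is essentially the standard argument behind this bound: the paper does not prove the theorem itself but cites it from McKay and Wanless, and the proof there is exactly this row-by-row extension count via permanents together with the Egorychev--Falikman (van der Waerden) inequality. Nothing is missing; the regularity check on $A^{(i)}$ and the telescoping of $\prod_{i=0}^{k-1}(n-i)^n$ are both handled correctly.
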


Let $R$ be a $k\times n$ Latin rectangle, with $k \leq n$. A
\emph{completion} of $R$ is a Latin square $L$ of order $n$ such that
$L$ contains $R$ in the first $k$ rows. We will use the
following result from \cite{MW99}.

\begin{theorem}\label{t:extensions}
  If $A,B$ are two $k\times n$ Latin rectangles then the ratio of the
  number of completions of $A$ versus the number of completions of
  $B$ is $\exp(O(n\log^2n))$.
\end{theorem}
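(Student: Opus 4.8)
The plan is to build a completion one row at a time and to control, at each stage, the number of choices for the next row by classical permanent inequalities. Fix a $j\times n$ Latin rectangle with $k\le j<n$ and consider extending it by one further row. A valid new row is exactly a system of distinct representatives for the sets of still-available symbols in the columns, so the number of one-row extensions equals $\operatorname{per}(M)$, where $M$ is the $n\times n$ $0$--$1$ availability matrix with $M_{cs}=1$ iff symbol $s$ has not yet appeared in column $c$. Since each column still admits $n-j$ symbols and each symbol is still admissible in $n-j$ columns, $M$ is $(n-j)$-regular. The crucial point is that the number of extensions is thereby sandwiched between bounds depending only on $n$ and $r:=n-j$, and not on the particular rectangle.

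First I would record the two bounds. By Br\'egman's theorem $\operatorname{per}(M)\le (r!)^{n/r}=:P_+(r)$, and by the Van der Waerden bound (applied to the doubly stochastic matrix $M/r$) $\operatorname{per}(M)\ge r^n n!/n^n=:P_-(r)$. Let $c(R)$ denote the number of completions of a Latin rectangle $R$. Building a completion of a $k\times n$ rectangle one row at a time, and using that the stage-$j$ count lies in $[P_-(n-j),P_+(n-j)]$ independently of the partial rectangle reached, a straightforward induction on the number of remaining rows gives
\[
\prod_{r=1}^{n-k}P_-(r)\ \le\ c(R)\ \le\ \prod_{r=1}^{n-k}P_+(r)
\]
for every $k\times n$ Latin rectangle $R$. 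Hence for any two such rectangles $A,B$ we have $c(A)/c(B)\le\prod_{r=1}^{n-k}P_+(r)/P_-(r)$, and by symmetry it suffices to bound this product by $\exp(O(n\log^2 n))$.

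It then remains to estimate $\sum_{r=1}^{n-k}\log\big(P_+(r)/P_-(r)\big)$. Expanding $\log(P_+(r)/P_-(r))=\tfrac{n}{r}\log(r!)+n\log n-n\log r-\log(n!)$ and inserting Stirling's expansion $\log(m!)=m\log m-m+\tfrac12\log(2\pi m)+O(1/m)$, the leading terms $n\log r$ and $n\log n$ and the terms linear in $n$ all cancel, leaving
\[
\log\frac{P_+(r)}{P_-(r)}=\frac{n}{2r}\log(2\pi r)-\frac12\log(2\pi n)+O\!\Big(\frac{n}{r^2}\Big).
\]
Summing over $r$ from $1$ to $n-k\le n$ and using $\sum_{r\le n}\frac{\log r}{r}=\tfrac12\log^2 n+O(1)$ together with $\sum_{r\le n}\frac1{r^2}=O(1)$, the dominant contribution is $\tfrac{n}{2}\cdot\tfrac12\log^2 n=\tfrac14 n\log^2 n$, while the $\log(2\pi)$ and $-\tfrac12\log(2\pi n)$ pieces contribute only $O(n\log n)$. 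This gives $c(A)/c(B)\le\exp(O(n\log^2 n))$, as required.

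I expect the main obstacle to be the delicate cancellation in the last step. The Br\'egman and Van der Waerden bounds each individually grow like $\exp(\Theta(n\log n))$ per stage, so their product across the $\Theta(n)$ stages is astronomically large and any naive bounding of $P_+(r)/P_-(r)$ is far too weak. Only after the exact Stirling cancellation of every $\Theta(n\log n)$-per-stage contribution does the surviving harmonic-type sum $\sum \frac{\log r}{r}$ emerge, and obtaining a clean $O(n\log^2 n)$ requires carefully tracking the $\tfrac12\log(2\pi m)$ correction terms rather than absorbing them into the error prematurely.
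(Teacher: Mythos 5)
Your proof is correct: the row-by-row sandwich between the Br\'egman upper bound and the van der Waerden (Egorychev--Falikman) lower bound for the permanent of the $(n-j)$-regular availability matrix, followed by the Stirling cancellation leaving a $\sum_r \frac{n\log r}{2r}=O(n\log^2 n)$ contribution, is sound. The paper itself states this theorem without proof, citing McKay and Wanless \cite{MW99}, and the argument given there is essentially the one you have reconstructed, so your approach matches the source.
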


We also require this special case of Theorem 4.7 of Godsil and McKay~\cite{GM90}.

\begin{theorem}\label{t:randLR}
  Let $k<n/5$. The probability that a random $k\times n$ Latin rectangle
  contains a specific $k\times k$ Latin subsquare is
  \[
  n^{-k^2}\exp\big(O(k^3/(n-3k+1))\big)
  \]
  as $n\rightarrow\infty$.
\end{theorem}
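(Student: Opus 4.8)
The plan is to reduce the statement to a ratio of Latin rectangle counts and then feed it into sharp asymptotic enumeration. Write $L(k,m)$ for the number of $k\times m$ Latin rectangles on $m$ symbols. By permuting rows, columns and symbols we may assume the specific subsquare occupies columns $1,\dots,k$, uses symbols $1,\dots,k$, and equals a fixed $k\times k$ Latin square $S$; the probability in question is invariant under this reduction. A $k\times n$ Latin rectangle $R$ restricts to $S$ on columns $1,\dots,k$ precisely when, in every row, all of the symbols $1,\dots,k$ already appear among those columns, which forces columns $k+1,\dots,n$ to form a $k\times(n-k)$ Latin rectangle on the symbols $k+1,\dots,n$; conversely any such completion is admissible. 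Hence exactly $L(k,n-k)$ of the $L(k,n)$ rectangles contain $S$, and the desired probability equals $L(k,n-k)/L(k,n)$.

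First I would extract the main term. The lower bound of \tref{t:lowbndLR} is exactly the product, over $j=0,\dots,k-1$, of the van der Waerden bound $m!\big((m-j)/m\big)^m$ for the permanent counting one-row extensions of a $j\times m$ rectangle. Treating this product as a proxy for $L(k,m)$ and expanding with Stirling's formula gives $\log L(k,m)\approx mk\log m-km-\tfrac12 k^2-\tfrac{1}{6}k^3/m$. Substituting $m=n$ and $m=n-k$ and subtracting, the $mk\log m$ and the linear terms combine to leave
\[
\log\frac{L(k,n-k)}{L(k,n)}=-k^2\log n+O\!\big(k^3/n\big),
\]
which already reproduces the factor $n^{-k^2}$ together with an error of the claimed shape, since $k^3/n=\Theta\big(k^3/(n-3k+1)\big)$ for $k<n/5$.

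To make this rigorous one cannot use the lower bound alone: the true counts exceed it, and one needs the multiplicative slack $L(k,m)/(\text{van der Waerden product})$ for widths $n$ and $n-k$ to agree up to a factor $\exp\big(O(k^3/(n-3k+1))\big)$. I would obtain this from the asymptotic enumeration of Latin rectangles itself. Writing $L(k,m)=\prod_{j=0}^{k-1}\E\big[\mathrm{per}(M_j)\big]$, where $M_j$ is the $m\times m$ zero--one availability matrix (every line sum equal to $m-j$) of a uniformly random $j\times m$ rectangle, the task reduces to estimating these expected permanents sharply enough that their product for width $n-k$, divided by that for width $n$, leaves only the claimed error.

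The main obstacle is precisely this permanent estimate. The crude sandwich between the van der Waerden lower bound and the Bregman--Minc upper bound is far too lossy here: their ratio is polynomial in $m$ per step, contributing $\exp(O(k\log m))$ over $k$ steps, which dwarfs the target error $\exp\big(O(k^3/(n-3k+1))\big)$. What is needed is the genuine asymptotic value of $\E[\mathrm{per}(M_j)]$ --- equivalently, the correction term in the asymptotic enumeration of $L(k,m)$ --- controlled uniformly across the full range $k<n/5$, where (unlike the small-$k$ regime) the error is allowed to be large rather than $o(1)$. Supplying this sharp, uniform estimate for the nearly regular permanents arising from random Latin rectangles is the heart of the argument, and is exactly the content of Godsil and McKay's Theorem 4.7.
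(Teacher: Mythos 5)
You should first know that the paper contains no proof of \tref{t:randLR} to compare against: the result is imported verbatim, introduced as ``this special case of Theorem 4.7 of Godsil and McKay \cite{GM90}'', and is used as a black box in the proof of \tref{t:largeSS}. Judged as a standalone argument, your proposal gets the combinatorics right but stops short of being a proof. The reduction to the ratio $L(k,n-k)/L(k,n)$ is correct (column and symbol permutations preserve the uniform measure, and a rectangle whose first $k$ columns equal the prescribed square $S$ is exactly $S$ glued to a $k\times(n-k)$ Latin rectangle on the remaining symbols), and your Stirling expansion of the product in \tref{t:lowbndLR} does give $-k^2\log n+O(k^3/n)$ for the log-ratio of the two proxy counts, with $k^3/n=\Theta\big(k^3/(n-3k+1)\big)$ in the stated range. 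But the decisive step --- controlling the multiplicative slack between $L(k,m)$ and the proxy, uniformly at widths $m=n$ and $m=n-k$ --- is discharged by appealing to ``the content of Godsil and McKay's Theorem 4.7'', which is precisely the statement under proof. That makes the proposal circular at its only hard point: what precedes it is a correct plausibility computation for the main term, and the theorem itself is still being taken on faith from \cite{GM90}, exactly as the paper does.

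There is also a concrete obstruction to the particular route you sketch, namely obtaining the answer as a quotient of two sharp enumeration asymptotics. The Godsil--McKay asymptotic formula for $L(k,n)$ is established only for $k=o(n^{6/7})$, whereas \tref{t:randLR} is claimed uniformly for $k<n/5$, which includes $k=\Theta(n)$; in that upper range no enumeration of $L(k,m)$ with additive error $O(k^3/n)$ in the logarithm is available off the shelf. (Conversely, for $k=\Theta(n)$ the target error $\exp(O(k^3/n))=\exp(O(n^2))$ is so generous that even the crude van der Waerden/Br\'egman sandwich, with slack $\exp(O(k\log n))$, would suffice --- so your remark that the sandwich is ``far too lossy'' is accurate only when $k^2=O(n\log n)$, and the two regimes would need to be treated separately.) The actual argument behind Theorem 4.7 of \cite{GM90} is not a division of two global counts but a direct row-by-row conditioning estimate for the probability of hitting the $k^2$ prescribed cells; the denominator $n-3k+1$ in the error is the signature of summing per-row errors of the shape $k^2/(n-3j+O(1))$ over $j\le k$. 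Your sketch never engages with that mechanism, so the ``heart of the argument'' you identify is genuinely absent rather than merely deferred.
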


We can now prove \tref{t:largeSS}, for which we use the following definition.
Let $P_{k,n,s}$ denote the probability that a random
$k\times n$ Latin rectangle contains a subsquare of order $s$; in particular,
\tref{t:largeSS} asserts that $P_{n,n,k}=\exp(-\omega(n\log^2n))$ as
$n\rightarrow\infty$ whenever $k=k(n)\ge n^{1/2}\log^{1/2+\eps}n$.

\begin{proof}[Proof of \tref{t:largeSS}]
  Using the union bound over all $\binom nk<2^n=\exp(O(n))$ sets of $k$
  rows, together with \tref{t:extensions}, we can see that
  it suffices to show that
  $P_{k,n,k}=\exp(-\omega(n\log^2n))$ whenever
  $k\ge n^{1/2}\log^{1/2+\eps} n$.
  Let $\beta\rightarrow 0$ sufficiently slowly.

  First suppose that
  $k=\alpha n$ where $\beta\le \alpha\le 1/2$.
  To bound $X_{k,n}$, the number of $k\times n$ Latin rectangles with a
  subsquare of order $k$, we note that there are $\binom{n}{k}=\exp(O(n))$
  ways to choose $k$ columns in which to place the subsquare,
  $\binom{n}{k}=\exp(O(n))$ ways to choose $k$ symbols for the subsquare and
  at most $\exp(k^2\log k-2k^2+O(n\log^2n))$ choices for the subsquare
  once the columns and symbols are chosen, by \tref{t:uppbndLS}.  The
  number of choices for the rest of the rectangle is certainly not more than
  $(n-k)!^k$, since each of the $k$ rows has $(n-k)!$ ways to complete it
  if we ignore restrictions imposed by other rows. Hence, by Stirling's
  approximation,	
  \begin{align*}
    X_{k,n}&\le\exp(k^2\log k-2k^2+k(n-k)(\log(n-k)-1)+O(n\log^2n))\\
    &=\exp(\alpha^2n^2(\log n+\log\alpha)
    -2\alpha^2n^2+\alpha(1-\alpha)n^2(\log n+\log(1-\alpha)-1)+O(n\log^2n)).
  \end{align*}
  
  Now, by \tref{t:lowbndLR} the total number of $k\times n$ Latin
  rectangles is at least
  \begin{align*}
  \frac{n!^{\alpha n}}{n^{\alpha n^2}}\bigg(\frac{n!}{(n-\alpha n)!}\bigg)^{\!n}
  &=\exp(-n^2(\alpha-\log n+1)-(1-\alpha)n^2(\log n+\log(1-\alpha)-1)
  +O(n\log^2 n)).
  \end{align*}
  Hence $P_{k,n,k}\le\exp(\theta(\alpha)n^2+O(n\log^2n))$, where  
  \begin{align*}
    \theta(\alpha)&=\alpha^2\log\alpha
    -2\alpha^2+\alpha(1-\alpha)(\log(1-\alpha)-1)
    +\alpha+1+(1-\alpha)(\log(1-\alpha)-1)\\
    &=\alpha^2\log\alpha
    +(1-\alpha^2)\log(1-\alpha)
    +\alpha-\alpha^2.
  \end{align*}
  Since $\frac{d\theta}{d\alpha}=2\alpha(\log\alpha-\log(1-\alpha)-1)$ we see
  that $\theta(\alpha)$ is decreasing for $\alpha\in(0,e/(e+1))$ and
  increasing for $\alpha\in(e/(e+1),1)$. Also,
  $\lim_{\alpha\rightarrow0}\theta(\alpha)=0=\lim_{\alpha\rightarrow1}\theta(\alpha)$.
  It follows that $\theta(\alpha)<0$ for $\alpha\in(0,1)$, proving that
  $P_{k,n,k}=\exp(-\omega(n\log^2n))$ as required.
  
  It remains to consider the case where $n^{1/2}\log^{1/2+\eps}n\le k<\beta n$. In
  this case \tref{t:randLR} tells us that the probability of a specific
  subsquare of order $k$ is $\exp(-k^2\log n+O(\beta k^2))$. As in the
  previous case, there are $\exp(k^2\log k-2k^2+O(n\log^2n))$ possible
  subsquares. Hence
  \[P_{k,n,k}\le\exp\big((\log k-\log n-2+O(\beta))k^2+O(n\log^2n)\big).\]
  If $k\le n^{1/2}\log^2n$ then we use
  $\log k\le\frac12\log n+O(\log\log n)$ and otherwise we simply use
  $\log k\le\log n$. Either bound leads to
  $P_{k,n,k}\le\exp\big(-(\frac12+o(1))n\log^{2+2\eps}n\big)$, completing the
  proof.
\end{proof}

Cavenagh, Greenhill and Wanless \cite{CGW08} considered the longest
row cycle in a random Latin square of order $n$. They gave good
evidence to believe this cycle will have length at least $n/\log n$,
but were only able to show that the length is at least
$\sqrt{n}/9$. They did this by showing in \cite[Thm 4.9]{CGW08} that the
first two rows of a random Latin square contain more than $9\sqrt{n}$
cycles with probability $o(\exp(-\frac14{\sqrt n}))$.  We use their
results to squeeze out a very slightly stronger result.

\begin{theorem}\label{t:longcyc}
  Let $\eps>0$.
  With probability $1-o(\exp(-\frac14n^{1/2}))$, the first two rows of a random
  Latin square of order $n$ contain a row cycle of length at least
  $n^{1/2}\log^{1-\eps} n$.
\end{theorem}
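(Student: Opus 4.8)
The plan is to reduce the statement to an upper bound on the number of row cycles and then finish by pigeonhole. First I would recall that the first two rows of a Latin square of order $n$ determine a fixed-point-free permutation $\sigma$ of the symbol set (sending the symbol in column $j$ of row $1$ to the symbol in column $j$ of row $2$), and that the row cycles of these two rows are exactly the cycles of $\sigma$; in particular their lengths are positive integers summing to $n$. Hence if the first two rows contain at most $c$ row cycles, the longest of them has length at least $n/c$. To produce a cycle of length at least $L:=n^{1/2}\log^{1-\eps}n$ it therefore suffices to show that, with probability $1-o(\exp(-\tfrac14 n^{1/2}))$, the number of row cycles is at most $c_0:=n/L=n^{1/2}\log^{\eps-1}n$.

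The key input is the large-deviation estimate of Cavenagh, Greenhill and Wanless underlying \cite[Thm 4.9]{CGW08}. As quoted above it shows that the number of row cycles exceeds $9n^{1/2}$ with probability $o(\exp(-\tfrac14 n^{1/2}))$, which through the pigeonhole step yields only the length $n^{1/2}/9$. The improvement I would seek is to invoke their bound with the threshold $c_0$ in place of $9n^{1/2}$. Since $c_0=n^{1/2}\log^{\eps-1}n$ lies only a polylogarithmic factor below $9n^{1/2}$, and since the heuristic (permutation-type) failure probability at this threshold is of order $\exp(-\Theta(n^{1/2}\log^{\eps}n))$, there should be ample room to keep the failure probability below $o(\exp(-\tfrac14 n^{1/2}))$. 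Concretely, I would trace the counting/switching argument in \cite{CGW08} that controls $\P(\#\text{cycles}>t)$ and verify that its bound, evaluated at $t=c_0$, still decays faster than $\exp(-\tfrac14 n^{1/2})$. Combining the two steps then gives the theorem: outside an event of probability $o(\exp(-\tfrac14 n^{1/2}))$ the number of row cycles is at most $c_0$, and pigeonhole produces a row cycle of length at least $n/c_0=L$.

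I expect the main obstacle to be exactly this threshold reduction. One must confirm that the large-deviation rate behind \cite[Thm 4.9]{CGW08} carries an extra logarithmic factor in the exponent, rather than being merely linear in the threshold: a purely linear rate $\exp(-\Theta(t))$ would be destroyed on lowering $t$ to $o(n^{1/2})$, whereas a rate of the form $\exp(-\Theta(t\log n))$ survives comfortably. Pinning down which of these holds (and with what constants) is the delicate part of the ``squeeze''.

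If that rate turns out to be too weak for the direct reduction, the fallback I would pursue is to bound the total length occupied by short cycles directly. A first-moment estimate should give that the expected number of columns lying in cycles of length below $L$ is $O(L\log n)=o(n)$, so that short cycles typically cannot cover all $n$ columns; I would then try to upgrade this to the required exponential tail, again via the switching method of \cite{CGW08}, forcing at least one cycle of length at least $L$.
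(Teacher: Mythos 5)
There is a genuine gap, and it lies exactly where you suspected: the threshold reduction. Your pigeonhole step replaces the event ``no row cycle of length $\ge L$'' by the strictly larger event ``the number of row cycles exceeds $c_0=n/L$'', and the latter cannot be controlled by the machinery you propose to invoke. The estimate underlying \cite[Thm 4.9]{CGW08} that this paper actually uses is Corollary 4.5 of \cite{CGW08}: the probability of a prescribed cycle type $c_1,\dots,c_\kappa$ is at most $n^{1/3}\prod_i(2/c_i)$. This bound gives \emph{no} decay for cycle types dominated by intercalates: a type with $c_0$ parts equal to $2$ and one part of length $n-2c_0$ has $\prod_i(2/c_i)\approx 2/n$, yet it lies in your bad event $\{\kappa>c_0\}$ while having a long cycle, so it is not in the theorem's bad event at all. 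Consequently $\P(\kappa>c_0)=o(\exp(-\frac14n^{1/2}))$ does not follow from these tools (it would essentially require an $\exp(-\Omega(\sqrt n))$ upper tail for the number of intercalates in a fixed pair of rows, which is not available here), and the ``$9$'' in $9\sqrt n$ is not a constant one can simply re-evaluate at $t=c_0=o(\sqrt n)$. Your intuition that $c_0$ is ``only a polylogarithmic factor below $9\sqrt n$'' points the wrong way: lowering the threshold makes the tail event more likely and the bound harder, not easier.

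The paper's proof keeps the hypothesis $c_1<L$ rather than discarding it. Conditioning on $\kappa\le 9\sqrt n$ (which \emph{is} what \cite[Thm 4.9]{CGW08} provides, as a black box), the cycles of length below $2n^{\delta}$ cover only $O(n^{1/2+\delta})=o(n)$ columns; since every cycle has length less than $L$, the remaining $n-o(n)$ columns must be covered by at least $\kappa'\ge(1+o(1))n/L$ cycles of \emph{medium} length in $[2n^{\delta},L)$. Each such cycle contributes a factor $2/c_i\le n^{-\delta}$ to the Corollary 4.5 bound, giving $P\le n^{1/3-\delta\kappa'}=\exp(-\delta(1+o(1))n^{1/2}\log^{\eps}n)$ per cycle type, after which a union bound over the $\exp(O(\sqrt n))$ partitions of $n$ (Hardy--Ramanujan) finishes the proof. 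The essential idea you are missing is that shortness of the longest cycle must be converted into an abundance of medium-length cycles, because short cycles are ``free'' in the available probability estimate. Your fallback (bounding the mass covered by short cycles) is closer in spirit to this argument, but as stated it is a first-moment heuristic with no route to the required $\exp(-\Omega(\sqrt n))$ tail.
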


\begin{proof}
  Let $P$ denote the probability that the lengths of the row cycles in the
  first two rows are $c_1,\dots,c_\kappa$. We may assume that
  $\kappa\le9\sqrt{n}$ and $c_1\ge c_2\ge\cdots\ge c_\kappa$. It suffices to
  prove the probability that $c_1<n^{1/2}\log^{1-\eps} n$ is
  $o(\exp(-\frac14n^{1/2}))$.
  By Corollary 4.5 of \cite{CGW08},
  \begin{equation}\label{e:Pbnd}
  P\le n^{1/3}\prod_{i=1}^\kappa\frac{2}{c_i}.
  \end{equation}
  Fix $\delta$ such that $0<\delta<1/2$.
  Let $\kappa' \leq \kappa$ be the maximum integer
  such that $c_{\kappa'}\ge 2n^{\delta}$.
  Note that $\sum_{i=1}^{\kappa'}c_i=n-O(n^{1/2+\delta})=n-o(n)$, which
  implies that 
  $\kappa'\ge(n-o(n))/c_1\ge n^{1/2}(1+o(1))\log^{\eps-1}n$,
  whenever $c_1<n^{1/2}\log^{1-\eps}n$.
  So $P\le n^{1/3-\delta\kappa'}=\exp(-\delta n^{1/2}(1+o(1))\log^\eps n)$
  for all $c_1,\ldots,c_\kappa$ with $c_1<n^{1/2}\log^{1-\eps}n$,
  by \eref{e:Pbnd}.  By the Hardy-Ramanujan Theorem \cite{HR18}, there
  are at most $\exp(O(\sqrt{n}))$ options for $c_1,\dots,c_\kappa$
  such that $c_1<n^{1/2}\log^{1-\eps}n$, from which the result follows
  by the union bound.
\end{proof}

\section{Canonical labelling of Latin squares}\label{s:canonicalLS}

In this section we prove \tref{t:isotopytimeLS}. We achieve this by
defining Procedures \ref{a:rowcyclelabel}--\ref{a:canonical} which
will be used to find the canonical labelling of a Latin square.  We
provide a detail description of how each of the procedures work and
how together they find the canonical labelling of a Latin square.  We
then show that the canonical labelling of a Latin square $L$ found by our
algorithm only depends on the isotopism class of $L$
(\tref{t:isoLsamecan}).  Finally we prove \tref{t:isotopytimeLS} at
the end of the section by proving the worst-case time complexity
(\tref{t:timetoextendpartiallabelling}) and the average-case
complexity (\tref{t:polycanonical}).

For any integer $n$, let $[n]:=\{1,\ldots, n\}$. Our Latin squares will all
have $[n]$ as their symbol set (although, of course, their subsquares may not).
Let $r_i$ and $r_j$ be two rows of some Latin square $L$. We define
$\perm{i}{j}=\perm{i}{j}(L)$ to be the permutation of $[n]$ such that
$\perm{i}{j}(i\circ c)=j\circ c$ for all $c \in [n]$. In other words,
$\perm{i}{j}$ is the permutation that maps $r_i$ to $r_j$. By the
Latin property $\perm{i}{j}$ is a derangement, meaning that it has no fixed
points. The cycles in the disjoint cycle decomposition of
$\perm{i}{j}$ correspond to the row cycles in $L$ in $r_i\cup r_j$.
For any permutation $\pi$ we define the \emph{cycle structure}
to be the (weakly) decreasing ordered list of cycle lengths
in the disjoint cycle decomposition of $\pi$. We also define $\ell_{i,j}(s)$
to be the length of the row cycle in $r_i\cup r_j$ containing the symbol $s$.

Let $\star$ be a symbol not contained in $[n]$. We extend the natural
total order on $[n]$ by assuming that $x < \star$ for all $x \in [n]$.
A \emph{partial permutation} of $[n]$ is a map $\pi:[n]\rightarrow
[n]\cup\{\star\}$ that is injective on the preimage of $[n]$.  For
$x\in[n]$, we say that $x$ is \emph{unlabelled} by $\pi$ if
$\pi(x)=\star$ and say $x$ is \emph{labelled} by $\pi$ otherwise. We
also use $\star$ to denote the constant partial permutation in which
all points are unlabelled.  The \emph{size} of a partial permutation
$\pi$, denoted $|\pi|$, is the number of elements of $[n]$ that are
labelled by $\pi$. A triple of partial permutations
$(\alpha,\beta,\gamma)$, each acting on $[n]$, is considered to be a
\emph{partial labelling} of a Latin square where $\alpha$ is applied
to the rows, $\beta$ is applied to the columns and $\gamma$ is applied
to the symbols. If such a partial labelling has no unlabelled row, no
unlabelled column, and no unlabelled symbol, then the partial
labelling is an isotopism. Given any partial labelling
$(\alpha,\beta,\gamma)$, we say that a row cycle is \emph{unlabelled}
if it hits some column $y$ such that $\beta(y) = \star$.  For a partial
labelling $(\alpha,\beta,\gamma)$, we let $L_{\alpha\beta\gamma}$ be
the array on the rows labelled by $\alpha$ and the columns labelled by
$\beta$ resulting from applying each of $(\alpha,\beta,\gamma)$ to the
Latin square $L$. We will abuse terminology slightly and refer to the
\emph{preimage of a $\star$} in $L_{\alpha\beta\gamma}$. By this we
will mean the unlabelled symbol in the entry of $L$ which was mapped
by $(\alpha,\beta,\gamma)$ to the entry of $L_{\alpha\beta\gamma}$
containing the $\star$ in question.

Let $w$ and $w'$ be two words of length $n$ over the alphabet
$[n]\cup\{\star\}$ and let $w[i]$ and $w'[j]$ denote the elements in position
$i$ and $j$ in $w$ and $w'$, respectively. Then $w$ is \emph{lexicographically
less than} $w'$ if there exists a position $i \in [n]$ such that $w[i] < w'[i]$
and $w[j] = w'[j]$ for all positions $j<i$.  Let $k \leq n$ and let $R$ and
$R'$ be two $k \times n$ arrays with entries taken from $[n] \cup \{\star\}$.
Let $r_t$ and $r'_t$ denote the $t$\textsuperscript{th} row of $R$ and $R'$
respectively. By reading the rows of the array from left to right, $r_t$ and
$r'_t$ are words of length $n$ obtained from the alphabet $[n]\cup\{\star\}$.
We consider $R$ to be lexicographically less than $R'$ if there exists an $i$
such that $r_i$ is lexicographically less than $r'_i$ and $r_j = r'_j$ for all
$j < i$.

\begin{figure}[h]
  \[
  \begin{array}{c|ccccc}
    &t+1&t+2&\cdots&t+\rho-1&t+\rho\\
    \hline
    i&t+1&t+2& \cdots&t+\rho -1& t + \rho\\
    j&t+2&t+3&\cdots&t+\rho& t+1
  \end{array}
  \]
  \caption{\label{fig:rowcycle}A row cycle in $r_i\cup r_j$ in standard form.}
\end{figure}

Let $\Gamma_{i,j}(L)$ denote the cycle structure of $\perm{i}{j}$ in $L$ where
$i<j$. A row cycle corresponding to a cycle of length $\rho$ in $\perm{i}{j}$
is \emph{contiguous} if its column labels
form a discrete interval $[t+1,t+\rho]$
and $i\circ(t+u+1)=j\circ(t+u)$ for $1\leq u\leq\rho-1$.
A contiguous row cycle is \emph{increasing} if
$i\circ(t+u)<i\circ(t+u+1)$ for $1\leq u\leq\rho-1$
and is in \emph{standard form} if $i \circ (t + u) = t + u$
for $1\leq u \leq \rho$. See \fref{fig:rowcycle} for an example of
a cycle in standard form.
Furthermore, $r_i\cup r_j$ is in \emph{standard form} if
\begin{itemize}
  \item each row cycle of $r_i \cup r_j$ is in standard form, 
  \item the row cycles in $r_i \cup r_j$ are sorted by length, with shorter
  cycles occurring in columns with higher indices.
  \item $\Gamma_{i,j}(L)$ is lexicographically maximum among the cycle structures of
  pairs of rows of $L$.
\end{itemize}
We say that a Latin square is \emph{reduced} if the symbols in the
first row and column are in increasing order.  For matrices induced by
partial labellings of Latin squares, we use the same definition with
an additional requirement that no cell in the first row or column may
contain $\star$.  Let $\RLS$ be the set of reduced Latin squares which
have $r_1 \cup r_2$ in standard form.

Let $L$ be a Latin square and let $\RLS(L)$ be the set generated in
the following way. We define $R_{\max}=R_{\max}(L)$ to be the set of
pairs $(i,j)$ such that $\Gamma_{i,j}(L)$ is lexicographically maximum
among the cycle structures of pairs of rows of $L$.  Let $\iota$
denote the identity permutation.  For each $(i,j)\in R_{\max}$ and for
each column permutation $\beta$ such that the row cycles in $r_i\cup r_j$
of $L_{\id\beta\id}$ are contiguous and in weakly decreasing
order of length, we let $\gamma$ be the permutation of the symbols of
$L$ such that $L_{\id\beta\gamma}$ has $r_i\cup r_j$ in standard
form. We then find the unique permutation $\alpha$ of rows of $L$ such
that $L_{\alpha\beta\gamma}$ is in reduced form, and add
$L_{\alpha\beta\gamma}$ to $\RLS(L)$. The next result follows
immediately from the above set up.

\begin{lemma}\label{lem:isoclass}
  If $\II(L)$ denotes the isotopism class of a Latin square $L$,
  then $\RLS(L) = \RLS \cap \II(L)$.
\end{lemma}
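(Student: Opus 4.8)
The plan is to prove the set equality $\RLS(L) = \RLS \cap \II(L)$ by establishing the two inclusions separately. Both inclusions should follow fairly directly from unwinding the definitions, so the main work is checking that the construction of $\RLS(L)$ does exactly what the definition of $\RLS$ demands, and that it captures \emph{every} reduced Latin square with first two rows in standard form that lies in the isotopism class of $L$.

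First I would show $\RLS(L) \subseteq \RLS \cap \II(L)$. Each element of $\RLS(L)$ is of the form $L_{\alpha\beta\gamma}$, obtained by applying the isotopism $(\alpha,\beta,\gamma)$ to $L$, so it automatically lies in $\II(L)$; here I should note that $\alpha$, $\beta$, $\gamma$ are genuine (total) permutations rather than proper partial labellings, so $L_{\alpha\beta\gamma}$ is a full Latin square. It remains to verify membership in $\RLS$, i.e.\ that $L_{\alpha\beta\gamma}$ is reduced and has $r_1\cup r_2$ in standard form. Reducedness holds by construction, since $\alpha$ was chosen precisely so that $L_{\alpha\beta\gamma}$ is in reduced form. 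For the standard-form condition I would trace through the three bullet-point requirements: $\gamma$ was chosen so that $r_i\cup r_j$ is in standard form, the choice of $\beta$ forces the row cycles to be contiguous and in weakly decreasing order of length (so shorter cycles sit in higher-indexed columns), and the pair $(i,j)$ was taken from $R_{\max}(L)$, guaranteeing that $\Gamma_{i,j}(L)$ is lexicographically maximum among cycle structures of row pairs. I would also observe that, since the row permutation $\alpha$ must send the chosen rows $i,j$ to rows $1,2$ (this being what reducedness together with the standard-form sorting enforces), the standard form of $r_i\cup r_j$ in $L_{\id\beta\gamma}$ becomes the standard form of $r_1\cup r_2$ in $L_{\alpha\beta\gamma}$.

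For the reverse inclusion $\RLS \cap \II(L) \subseteq \RLS(L)$, suppose $M \in \RLS$ with $M\in\II(L)$, so there is an isotopism $(\alpha,\beta,\gamma)$ with $M = L_{\alpha\beta\gamma}$. The goal is to show that this $M$ arises from the construction. Because $M$ is reduced with $r_1\cup r_2$ in standard form, its cycle structure $\Gamma_{1,2}(M)$ is lexicographically maximum among row pairs; since cycle structures of row pairs are an isotopism invariant (indeed $\perm{i}{j}$ is conjugated under isotopism and conjugation preserves cycle type), the preimage pair $(\alpha^{-1}(1),\alpha^{-1}(2))$ must lie in $R_{\max}(L)$. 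Thus the construction does consider this pair $(i,j)$. I would then argue that the specific $\beta$ realising $M$ makes the row cycles of $r_i\cup r_j$ in $L_{\id\beta\id}$ contiguous and weakly decreasing in length (inherited from the standard form of $M$), so this $\beta$ is among those the construction iterates over; the induced $\gamma$ and then $\alpha$ are forced, matching $M$ exactly. The one subtlety to handle carefully is uniqueness versus non-uniqueness: the construction fixes $\gamma$ as the unique symbol permutation giving standard form and $\alpha$ as the unique row permutation giving reduced form \emph{once $\beta$ is chosen}, so I must confirm that the $(\alpha,\beta,\gamma)$ witnessing $M$ agrees with these forced choices rather than being merely one isotopism among many mapping $L$ to $M$.

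The main obstacle I anticipate is the bookkeeping around the row permutation $\alpha$ and the interplay between the several standard-form conditions. In particular, I need to be sure that requiring $r_1\cup r_2$ to be in standard form, together with reducedness, pins down $\alpha$ on the rows $\{1,2\}$ and that the lexicographic-maximality clause is genuinely an isotopism invariant so that $R_{\max}$ transfers correctly between $L$ and $M$. Everything else is a matter of carefully matching the constructive choices (choose $(i,j)\in R_{\max}$, then $\beta$, then the forced $\gamma$ and $\alpha$) against the defining properties of $\RLS$, and the statement indeed ``follows immediately from the above set up'' once these invariance and uniqueness points are confirmed.
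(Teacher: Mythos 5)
Your proposal is correct and matches the paper's intent: the paper gives no explicit proof, stating only that the lemma ``follows immediately from the above set up,'' and your two-inclusion unwinding of the definitions (isotopism-invariance of cycle structures, $\alpha$ forced to send $i,j$ to $1,2$ by reducedness plus standard form, uniqueness of the induced $\gamma$ and $\alpha$ once $\beta$ is fixed) is precisely the argument being left implicit.
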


Below, in Procedures \ref{a:rowcyclelabel}--\ref{a:canonical}, we
describe an algorithm for finding a canonical representative of the
isotopism class of a given Latin square. The top level of the process
(\aref{a:canonical}) considers pairs $(i,j)$ from $R_{\max}$ in
turn. For each pair $(i,j)$ a number of branching steps
(\aref{a:branch}) are performed, each of which is followed by an
extension phase (\aref{a:extend}).  In a branching step we choose a
currently unlabelled cycle with the longest length and label it (via
\aref{a:rowcyclelabel}). The branching results from the different
labellings that the cycle can be given and (when there are multiple
unlabelled cycles sharing the longest length) the particular cycle
chosen.  In the extension phase we extend the current partial
labelling by labelling unlabelled row cycles (again via
\aref{a:rowcyclelabel}) until the array induced by the labelled
rows and columns is a Latin square. If some rows and columns remain
unlabelled then we recursively undertake a new branching step
followed by another extension phase. Whenever an extension phase ends
with a labelling of the whole Latin square, the new labelled Latin
square is checked to see if it is lexicographically minimal amongst
the Latin squares so far obtained. At the end of the process, the
labelled Latin square that is lexicographically minimal among those
obtained in this way is declared to be the canonical representative.

\begin{algorithm}[tb]
  \caption{Label a single row cycle of a Latin square.\label{a:rowcyclelabel}}
  \begin{algorithmic}[1]
    \Input
    \Desc{$L$}{A Latin square.}
    \Desc{$(i,j)$}{A pair of row indices.}
    \Desc{$(\alpha, \beta, \gamma)$}{A partial labelling of $L$. }
    \Desc{$s$}{A symbol in $L$ that is unlabelled by $\gamma$.}
    \EndInput
    \Output
    \Desc{$(\alpha', \beta', \gamma')$}{A partial labelling of $L$.}
    \EndOutput
    \Procedure{LabelRowCycle}{}
    \State $\sigma\gets s$. 
    \State $\lambda\gets P[\ell_{i,j}(s)]$.  
    \State $(\alpha',\beta',\gamma') \gets (\alpha,\beta,\gamma)$.
    \For{$k$ from $1$ to $\ell_{i,j}(s)$}
    \State $\curt\gets\curt+1$.
    \State $\gamma'(\sigma)\gets \lambda$.
    \State $T_\gamma[\curt]\gets\sigma$.
    \State Let $b$ be the column index such that $L[i,b]=\sigma$. \label{line:updatebeta}
    \State $\beta'(b)\gets \lambda$.
    \State $T_\beta[\curt]\gets b$.
    \State Let $a$ be the row index where $L[a,c_1]=\sigma$ and $\beta'(c_1)=1$. \label{line:updatealpha}
    \State $\alpha'(a)\gets \lambda$.
    \State $T_\alpha[\curt]\gets a$.
    \State $\sigma\gets L[j,b]$.
    \State $\lambda\gets \lambda+1$.
    \EndFor
    \State $P[\ell_{i,j}(s)] \gets P[\ell_{i,j}(s)] + \ell_{i,j}(s)$.
    \State \Return $(\alpha',\beta',\gamma')$.
    \EndProcedure    
  \end{algorithmic}
\end{algorithm}

The inputs for \aref{a:rowcyclelabel} are a Latin square $L$, a pair
$(i,j)$ of indices of rows in $L$, a partial labelling
$(\alpha,\beta,\gamma)$ of $L$ and a symbol $s$ unlabelled by
$\gamma$.  Let $R$ denote the row cycle in $r_i\cup r_j$ containing
$s$.  \aref{a:rowcyclelabel} produces a new partial labelling
$(\alpha',\beta',\gamma')$ in which $R$ maps to a row cycle in
standard form.  This is achieved by augmenting the partial labelling
$(\alpha,\beta,\gamma)$ without changing labels that have already been
assigned.  \aref{a:rowcyclelabel} has $\ell_{i,j}(s)$ steps, each of
which labels one new row, one new column and one new symbol. At the
beginning of each step, $\sigma$ is a symbol yet to be labelled and
$\lambda$ is a label yet to be assigned. Initially $\sigma$ is $s$ and
$\lambda$ is $P[\ell_{i,j}(s)]$. The value $P[\ell_{i,j}(s)]$ will be
chosen in such a way that the initial and subsequent values of
$\lambda$ are not labels in $\alpha$, $\beta$ or $\gamma$. The exact
mechanism for how this is achieved will be specified later.  Assume that there
is a column $c_1$ that satisfies $\beta'(c_1)=1$, a fact that will be justified
shortly.  \aref{a:rowcyclelabel} proceeds to
augment $\gamma'$ by setting $\gamma'(\sigma) =\lambda$, augment
$\beta' $ by setting $\beta'(b) =\lambda$, where $b$ is the column
index such that $L[i,b]=\sigma$ and augment $\alpha'$ by setting
$\alpha'(a)=\lambda$, where $a$ is the row index such that
$L[a,c_1]=\sigma$.  The steps ends by updating $\sigma=L[j,b]$ and
incrementing $\lambda$ by $1$.  These updates ensure that successive
values of $\sigma$ are given consecutive labels and that $R$ is mapped
to a row cycle in standard form in which $s$ is assigned the smallest
label. Note that while $\sigma$ and $b$ are given the same label in
the present algorithm, this will change in \sref{s:STS}, where we will
have a variant of \aref{a:rowcyclelabel} which does not put $R$ into
standard form.  The choice of $a$ ensures that the first row and
column of $L_{\alpha' \beta' \gamma'}$ are the same.
Note that while running \aref{a:canonical}, if we ever call
\aref{a:rowcyclelabel} with $(\alpha,\beta,\gamma)=(\star,\star,\star)$
then it will be the case that $P[\ell_{i,j}(s)]=1$. This ensures
that $c_1$ is defined before we reach \Lref{line:updatealpha}
and also that $\alpha'(i)=1$.
Subsequent calls to \aref{a:rowcyclelabel} which have
$(\alpha,\beta,\gamma)\ne(\star,\star,\star)$ will inherit the previous
value of~$c_1$. 

For each $(i,j)$ in $R_{\max}$, \aref{a:canonical} begins with the
empty partial labelling $(\star,\star,\star)$
and iteratively labels new elements using only
\aref{a:rowcyclelabel}, or reverts to a previously obtained partial
labelling via backtracking.  Therefore, at any stage of
\aref{a:canonical}, the current labelling $(\alpha,\beta,\gamma)$
satisfies the following properties, which are preserved by
the way that it employs \aref{a:rowcyclelabel}:
 \begin{itemize}
 \item each labelled row cycle in the first two rows of $L_{\alpha \beta \gamma}$
   is in standard form,
 \item $L_{\alpha\beta\gamma}$ is in reduced form, 
 \item $\alpha([n])=\beta([n])=\gamma([n])$.
 \end{itemize}

The following definitions and data structures are useful for
determining the efficiency of Procedures
\ref{a:rowcyclelabel}--\ref{a:canonical}.  For $k\in[n]$, let
$\mathcal{P}_k$ be the set of cycles in $r_i \cup r_j$ of length
strictly greater than $k$.  Let $P$ be a dynamic list of $n$ integers
with initial values $P[k]=1+\sum_{R\in\mathcal{P}_k}|R|$. 
This initial value of $P[k]$ will be the lowest index of any
column that occurs within a cycle
of length $k$ when $r_i \cup r_j$ is in standard form. 
It is clear
that the list $P$ can be computed efficiently while determining the
cycle structure of $r_i\cup r_j$.  Throughout \aref{a:canonical}, we
maintain a variable $\curt$ which is the number of symbols that have
been labelled thus far. We initialise $\curt :=0$ when no symbols have
been labelled.  Whenever \aref{a:rowcyclelabel} is
used to label a row cycle of length $k$ it will increase both
$P[k]$ and $\curt$ by $k$.
The definition of $\mathcal{P}_k$ and the initialisation of $P$ ensures
that at any stage $P[k]$ is at most the initial value of $P[k-1]$, with
equality only if all row cycles of length $k$ have been labelled. When
backtracking in \Lref{line:backtrack} of \aref{a:branch}, changes to $P$
and $\curt$ will be undone whenever the previous partial labelling is restored,
requiring at most a linear number of steps. At the start of
\aref{a:canonical} we find the lengths of all row cycles. This allows us to
precompute all values of $\ell_{i,j}(s)$, as well as the initial values for
$P$.
The columns in the discrete interval $[P[k],P[k-1])$, are pre-allocated for the
cycles of length $k$ in $r_i\cup r_j$. Then at any stage until all cycles of
length $k$ are labelled, $P[k]$ holds the smallest pre-allocated label that has
not yet been assigned to any column in a row cycle of length $k$.  When a row
cycle $R$ of length $k$ is labelled by \aref{a:rowcyclelabel}, the symbol $s$
that is sent as a parameter is given a label $s'$ and its column is labelled
with $P[k]$. The rest of $R$ is then labelled with consecutive symbols and
columns from those starting points.

Throughout \aref{a:canonical} we use lists named $T_\alpha$, $T_\beta$
and $T_\gamma$ to keep track of the order in which elements have been
labelled by the current partial labelling.  Given a partial labelling
$(\alpha,\beta,\gamma)$, \aref{a:rowcyclelabel} labels a row cycle
$R$ by augmenting each of $\gamma$, $\beta$ and $\alpha$ in turn, 
labelling one element at a time. Each time
$\phi\in\{\alpha,\beta,\gamma\}$ is augmented, $T_\phi$ is updated
accordingly.  Necessarily, the elements of $R$ are labelled in
increasing order of the labels of the elements.

\begin{algorithm}[tb]
  \caption{Extend a partial labelling until it labels an entire Latin subsquare. \label{a:extend}}
  \begin{algorithmic}[1]
    \Input
    \Desc{$L$}{A Latin square.}
    \Desc{$(i,j)$}{A pair of row indices.}
    \Desc{$(\alpha, \beta, \gamma)$}{A partial labelling of $L$.}
    \EndInput
    \Output
    \Desc{$(\alpha', \beta', \gamma')$}{A partial labelling of $L$.}
    \EndOutput
    \Procedure{Extend}{}
      \State $(r',c') \gets\big(1,\curt-\ell_{i,j}(T_\gamma[\curt])+1\big)$. 
      \State $(\alpha', \beta', \gamma') \gets (\alpha, \beta, \gamma)$.
      \While {$c' \leq \curt$} \label{line:whileuptot}
        \State $s \gets L[T_\alpha[r'], T_\beta[c']]$. 
        \If{$\gamma'(s)=\star$}
        \State $(\alpha', \beta', \gamma') \gets $ \Call{LabelRowCycle}{$L$, $(i,j)$, $(\alpha', \beta', \gamma')$, $s$}. \label{line:labelrowcycle}
        \EndIf
        \State $(r', c') \gets \Succ(r',c')$. \label{line:succ}
      \EndWhile
      \State \Return $(\alpha', \beta', \gamma')$.
    \EndProcedure
  \end{algorithmic}
\end{algorithm}

 \medskip

We define a successor function on pairs of positive integers by 
\[
\Succ(x,y) =
\begin{cases}
  (1, x+1), & \text{if } y \leq 1, \\
  (x+1,y), & \text{if } x+1 \leq y,\\
  (x, y-1), & \text{if } x\ge y>1.
\end{cases}
\]

At the start of \aref{a:extend}, the columns of
$L_{\alpha\beta\gamma}$ are those that are either in a (possibly
empty) Latin subsquare or the row cycle that has just been labelled by
the most recent branch step.  Necessarily, that row cycle has length
$\ell_{i,j}(T_\gamma[\curt])$ and the Latin subsquare has order
$\curt-\ell_{i,j}(T_\gamma[\curt])$. If the Latin subsquare has
positive order then it will have been searched by a previous instance
of \aref{a:extend}, and it will contain no $\star$.  We thus begin our
new search at $(r',c')=(1,\curt-\ell_{i,j}(T_\gamma[\curt])+1)$.  From that
position we proceed to update $(r',c')$, in the order induced by
$\Succ$, until we find a
cell $(T_\alpha[r'],T_\beta[c'])$ of $L_{\alpha\beta\gamma}$ that
contains a $\star$.  Let $s$ denote the unlabelled symbol that is the
preimage of this $\star$. Then the row cycle in $r_i\cup r_j$
containing $s$ is next labelled via \aref{a:rowcyclelabel}.  We
stress that the value of $\curt$ is increased by calling
\aref{a:rowcyclelabel}, which extends the search space governed by
\Lref{line:whileuptot} of \aref{a:extend}. This process of searching
and labelling is repeated until every cell in $L_{\alpha\beta\gamma}$
has been considered and no $\star$ remains.  At this point,
$L_{\alpha\beta\gamma}$ must be a Latin square, isotopic to a Latin
subsquare of $L$, and \aref{a:extend}
ends. Under certain very special circumstances \aref{a:extend} will be
called when $L_{\alpha\beta\gamma}$ is already a Latin square. In that
case, \aref{a:extend} will return the same partial labelling that was its input.
In all cases, \aref{a:extend} guarantees that the labelling it returns
is an isotopism of a Latin subsquare of $L$.

To maintain the required computational efficiency, our canonical labelling
process will keep track of a global dynamic version of the lists $P$,
$T_\alpha$, $T_\beta$ and $T_\gamma$, as well as $\curt$.
As each cycle in $r_i\cup r_j$ is labelled, we update these variables
to reflect the current status of the new labelling. 
For simplicity we omit most details of the bookkeeping needed to
maintain these lists. We note, however, that \aref{a:canonical} has
\Lref{line:updateP} to reflect that $P$ needs to be reinitialised
based on the cycle structure of $r_i \cup r_j$ when considering a new
pair $(r_i,r_j)$. 
All updating of each list requires at most linear
time with respect to the number of labelled elements in the current
partial labelling. Thus, at any point we may reference these lists
freely without adversely affecting the computational efficiency of the
algorithm.

We can now provide a more detailed outline of
the algorithm for finding a canonical labelling.  
Given a Latin square $L$ we select a row-column pair from
$R_{\max}$ and initialise an empty partial labelling.  We then proceed
by recursion. At the beginning of each stage in the recursion we have a partial
labelling $(\alpha,\beta,\gamma)$ such that
$L_{\alpha\beta\gamma}$ is a (possibly empty) Latin square of order
less than the order of $L$.  For each
symbol $s$ in a longest unlabelled row cycle of length $k$, we apply
\aref{a:rowcyclelabel} to assign $s$ the label $P[k]$. \aref{a:extend}
iteratively labels row cycles via \aref{a:rowcyclelabel}, where the
symbol chosen at each stage is the preimage of the $\star$ that
occupies the least cell with respect to the total ordering induced by
$\Succ$. This is continued until the current partial labelling
$(\alpha',\beta',\gamma')$ induces a Latin square.
If the resulting partial labelling is an isotopism then we check if
$L_{\alpha'\beta'\gamma'}$ is the lexicographically minimal Latin
square generated so far. If so, we select this square as our new
candidate, otherwise we discard it. If $(\alpha',\beta',\gamma')$ is
not an isotopism then we recurse, invoking a new branch step
(\Lref{line:newbranch} of \aref{a:branch}).  After our recursion
is complete we return the labelling that gave the
lexicographically minimal Latin square amongst all labellings
generated by the search. This procedure is run for every pair of rows
in $R_{\max}$, after which the lexicographically minimal labelling is
chosen. This labelling is considered to be our canonical labelling.

\begin{algorithm}[tb]
  \caption{Search all possible labelling extensions.\label{a:branch}}
  \begin{algorithmic}[1]
    \Input
    \Desc{$L$}{A Latin square.}
    \Desc{$(i,j)$}{A pair of row indices.}
    \Desc{$(\alpha, \beta, \gamma)$}{A partial labelling of $L$.}
    \EndInput
    \Output
    \Desc{$(\alpha^*\!, \beta^*\!, \gamma^*)$ }{A labelling of $L$.}
    \EndOutput
    \Procedure{Branch}{}
    \State Let $\CC$ be the set of longest row cycles in
    $r_i \cup r_j$ that are unlabelled by $(\alpha,\beta,\gamma)$.
    \State Let $S$ be the set of all symbols in the row cycles of $\CC$.
    \State {$(\alpha^*\!, \beta^*\!, \gamma^*)\gets(\star,\star,\star)$} 
    \For{$s \in S$}
    \State $(\alpha',\beta',\gamma')\gets$ \Call{LabelRowCycle}{$L$, $(i,j)$, $(\alpha, \beta, \gamma)$, $s$}. \label{line:rowoutput}
    \State $(\alpha', \beta', \gamma')\gets$ \Call{Extend}{$L$, $(i,j)$, $(\alpha', \beta', \gamma')$}.
    \label{line:extendoutput}
    \If{$|\alpha'| < n$}
    \State $(\alpha', \beta', \gamma') \gets$\Call{Branch}{$L$, $(i,j)$, $(\alpha', \beta', \gamma')$}. \label{line:newbranch}
    \EndIf
    \If{$(\alpha^*,\beta^*,\gamma^*)=(\star,\star,\star)$ or $L_{\alpha'\beta'\gamma'} < L_{\alpha^*\beta^*\gamma^*}$} \label{line:madeisotopism}
    \State $(\alpha^*,\beta^*, \gamma^*) \gets (\alpha', \beta', \gamma')$.
    \EndIf
    \State Restore previous values of $P$ and $\curt$. \label{line:backtrack}
    \EndFor
    \State \Return $(\alpha^*, \beta^*, \gamma^*)$.
    \EndProcedure
  \end{algorithmic}
\end{algorithm}

\begin{algorithm}[tb]
  \caption{Determine the canonical labelling of a Latin square\label{a:canonical}}
  \begin{algorithmic}[1]
    \Input
    \Desc{$L$}{A Latin square.}
    \EndInput
    \Output
    \Desc{$(\alpha, \beta, \gamma)$}{An isotopism of $L$.}
    \EndOutput
    \Procedure{Canonical}{}
    \State Find the length of every row cycle in $L$. \label{line:cyclen}
    \State $(\alpha,\beta,\gamma)\gets (\star,\star,\star)$.
    \For{$(i,j) \in R_{\max}(L)$}
    \State Update $P$ with the cycle structure of $r_i\cup r_j$. \label{line:updateP}
    \State $(\alpha^*,\beta^*,\gamma^*) \gets$ \Call{Branch}{$L$, $(i,j)$, $(\star,\star,\star)$}.
    \label{line:firstbranch}
    
    \If{$(\alpha,\beta,\gamma)=(\star,\star,\star)$ or $L_{\alpha^*\beta^*\gamma^*} < L_{\alpha\beta\gamma}$}
    \State $(\alpha,\beta,\gamma) \gets (\alpha^*,\beta^*,\gamma^*)$.
    \EndIf
    \EndFor
    \State \Return $(\alpha,\beta,\gamma)$.
    \EndProcedure
  \end{algorithmic}
\end{algorithm}

We can now prove that the canonical labelling of a Latin square $L$ found
by \aref{a:canonical} only depends on the isotopism class of $L$.

\begin{theorem}\label{t:isoLsamecan}
  Let $L_1$ and $L_2$ be two isotopic Latin squares and let $L_1'$ and
  $L_2'$ be the Latin squares obtained by running \aref{a:canonical}
  on $L_1$ and $L_2$, respectively. Then $L_1'=L_2'$.
\end{theorem}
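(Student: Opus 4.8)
The plan is to reduce the statement to a property of the set $\RLS(L)$ and then invoke \lref{lem:isoclass}. Since $L_1$ and $L_2$ are isotopic we have $\II(L_1)=\II(L_2)$, so \lref{lem:isoclass} gives $\RLS(L_1)=\RLS\cap\II(L_1)=\RLS\cap\II(L_2)=\RLS(L_2)$. It therefore suffices to prove that, on any input $L$, \aref{a:canonical} returns the lexicographically least element of $\RLS(L)$: this element depends only on $\RLS(L)$, hence only on $\II(L)$, and then $L_1'=L_2'$ is immediate. I would establish the two halves separately, namely that the output lies in $\RLS(L)$ and that it is no larger than $\min\RLS(L)$.

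For soundness I would argue that every completely labelled square produced at a leaf of the search (each $L_{\alpha'\beta'\gamma'}$ reached with $|\alpha'|=n$) lies in $\RLS(L)$. This rests on the three invariants that \aref{a:canonical} maintains through its use of \aref{a:rowcyclelabel}: every labelled row cycle in the first two rows is in standard form, $L_{\alpha'\beta'\gamma'}$ is reduced, and $\alpha'([n])=\beta'([n])=\gamma'([n])$. At a leaf all row cycles of $r_i\cup r_j$ are labelled, so the first two rows are entirely in standard form; as $(i,j)\in R_{\max}(L)$ and an isotopism preserves both the cycle structure of a given row pair and the multiset of such structures, $\Gamma_{1,2}$ of the leaf is lexicographically maximum, so all three bullets defining $\RLS$ hold. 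Because \aref{a:rowcyclelabel} and \aref{a:extend} only ever apply an isotopism, the leaf is isotopic to $L$, and hence lies in $\RLS\cap\II(L)=\RLS(L)$. As the returned square is the lexicographic minimum over a family of such leaves, it is at least $\min\RLS(L)$.

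For completeness I would write $M=\min\RLS(L)$ and, using the construction of $\RLS(L)$, fix a pair $(i_0,j_0)\in R_{\max}$ and a column permutation $\beta_0$ yielding $M$. The goal is to exhibit a root-to-leaf path in the search run for $(i_0,j_0)$ whose leaf equals $M$: process the row cycles of $r_{i_0}\cup r_{j_0}$ in weakly decreasing length order, and at each call to \aref{a:branch} choose, among the longest currently unlabelled cycles and their anchoring symbols $s\in S$, the option consistent with $\beta_0$. Since the column block $[P[k],P[k-1))$ pre-allocated to the cycles of length $k$ is exactly the block occupied by those cycles under $\beta_0$, such a path stays consistent with $\beta_0$; by soundness its leaf is an element of $\RLS(L)$, so it is at least $M$, while agreeing with $M$ on rows $1$ and $2$. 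Combined with the previous paragraph this would force the output to equal $M=\min\RLS(L)$.

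The main obstacle is the interface between the branching in \aref{a:branch} and the deterministic behaviour of \aref{a:extend}. \aref{a:branch} tries every anchoring of the longest unlabelled cycles, whereas \aref{a:extend} labels each further cycle it absorbs into the growing Latin subsquare using the single anchor fixed by the preimage of the least $\star$ in the $\Succ$ order, with no branching at all. To complete the argument I must show that this forced anchor always coincides with the one prescribed by $\beta_0$. This should follow because the symbols of such a cycle occupy cells whose rows and columns have already been labelled in accordance with $\beta_0$, so that the Latin property together with the standard-form requirement pins the cycle's anchor uniquely; the delicate point is verifying that this unique choice is precisely $\beta_0$'s, equivalently that the set of cycles \aref{a:extend} absorbs, and the columns it assigns to them, are exactly those used by $\beta_0$. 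Everything else is routine bookkeeping with the three invariants.
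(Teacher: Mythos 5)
Your reduction is to the claim that \aref{a:canonical} outputs the lexicographically least element of $\RLS(L)$, and this is where the argument breaks. The soundness half is fine: every leaf of the search tree does lie in $\RLS(L)=\RLS\cap\II(L)$. But the completeness half is not just a ``delicate point'' left to check --- it is the whole difficulty, and the claim it rests on is stronger than what the algorithm actually guarantees. The set of leaves is in general a \emph{proper} subset of $\RLS(L)$: the definition of $\RLS(L)$ allows every cycle of length $k$ to be anchored at any of its $k$ symbols and the equal-length cycles to be permuted arbitrarily within their pre-allocated column block, whereas \aref{a:extend} anchors each absorbed cycle deterministically at the preimage of the $\Succ$-least $\star$, with no branching. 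The $\Succ$ order is an L-shaped shell traversal, not row-major order, so there is no reason the greedy anchor it forces should coincide with the anchor used by the lexicographically minimal element of $\RLS(L)$; your sketch of why ``the Latin property together with the standard-form requirement pins the cycle's anchor uniquely'' does not hold, since standard form constrains only rows $i$ and $j$ and leaves the anchor of each cycle free. Without a proof that $\min\RLS(L)$ survives as a leaf, the argument does not close, and I do not believe it can be repaired in this form.

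The paper's proof sidesteps this entirely. It never identifies the output with $\min\RLS(L)$; instead, given an isotopism $\phi$ from $L_1$ to $L_2$, it proves by induction on the depth of the search tree that every node $\phi_1$ of the tree for $L_1$ has a corresponding node $\phi_2$ for $L_2$ with $\phi_1=\phi_2\phi$, the key observation being that the deterministic choices inside \aref{a:extend} depend only on the partially labelled array $L_{\alpha\beta\gamma}$, which is identical for corresponding nodes. Hence the two trees produce the same \emph{set} of labelled squares at their leaves --- whatever canonically determined subset of $\RLS(L)$ that happens to be --- and the lexicographic minima agree. That is the structure you need: invariance of the leaf set under isotopism, not exhaustiveness of the leaf set within $\RLS(L)$.
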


\begin{proof}
Let $\phi=(\alpha,\beta,\gamma)$ be an isotopism from $L_1$ to
$L_2$.  Suppose $(i_1,j_1) \in R_{\max}(L_1)$ and let
$(i_2,j_2)=(\alpha(i_1),\alpha(j_1))$ be the corresponding pair in
$R_{\max}(L_2)$.  We consider the search tree $T_x$ of
$\textsc{Branch}(L_x,(i_x,j_x),(\star,\star,\star))$ for each $x\in\{1,2\}$.
Here the root is $(\star,\star,\star)$ and for any node, its children
are the partial labellings produced by \Lref{line:extendoutput}
of $\textsc{Branch}$.
The depth of a node $\phi'$ is the number of nested calls to
$\textsc{Branch}$ that it takes to obtain $\phi'$, including the
initial call $\textsc{Branch}(L_x,(i_x,j_x),(\star,\star,\star))$. The
root has depth $0$.  
The leaves of $T_x$ correspond to isotopes of
$L_x$, and $\textsc{Branch}(L_x,(i_x,j_x),(\star,\star,\star))$
returns the lexicographically least of these.  \aref{a:canonical}
returns the lexicographically least of all the isotopes obtained from
$\textsc{Branch}(L_x,(i_x,j_x),(\star,\star,\star))$ over
$(i_x,j_x)\in R_{\max}(L_x)$.  Thus it suffices to show
that for each leaf $(\alpha_1,\beta_1,\gamma_1)$ in $T_1$, there is a leaf
$(\alpha_2,\beta_2,\gamma_2)$ in $T_2$ such that
$(\alpha_1,\beta_1,\gamma_1)=(\alpha_2\alpha,\beta_2\beta,\gamma_2\gamma)
=(\alpha_2,\beta_2,\gamma_2)\phi$. Here and henceforth we compose maps
from right to left and compose triples of maps componentwise.
Also, we adopt the convention that $\pi(\star)=\star$ for any
partial permutation $\pi$, which will allow us to compose partial permutations.

Let $A_k$ and $B_k$ be the set of nodes of depth $k$ in $T_1$ and
$T_2$, respectively.  It suffices to prove for
all $k\ge0$ that for all $\phi_1 \in A_k $ there exists $\phi_2\in B_k$
such that $\phi_1 =\phi_2 \phi$.  We proceed by induction on $k$,
where the base case $k=0$ is trivially true as
$A_0=\{(\star,\star,\star)\}=B_0$.

Let $\phi_1\in A_k$ be a non-root node  and let $\phi_1' \in A_{k-1}$ 
be its parent.
By the induction hypothesis, there exists $\phi_2' \in B_{k-1}$
such that $\phi_1' =\phi_2' \phi$.
Let $s_1$  be the symbol whose execution of
Lines~\ref{line:rowoutput}--\ref{line:extendoutput} in 
$\textsc{Branch}(L_1,(i_1,j_1),\phi_1')$ results in $\phi_1$.
Let $s_2=\gamma(s_1)$ and note by construction that
the symbol $s_2$ is unlabelled by $\phi_2'$.
Since $\phi$ is an isotopism,
$s_1$ and $s_2$ are contained in row cycles of the same length in
rows $(i_1,j_1)$ of $L_1$ and rows $(i_2,j_2)$ of $L_2$, respectively.
Let $\phi_2$ be the result of executing
Lines~\ref{line:rowoutput}--\ref{line:extendoutput} in 
$\textsc{Branch}(L_2,(i_2,j_2),\phi_2')$ for $s_2$.
It suffices to show that $\phi_1 =\phi_2 \phi$. 

We consider the execution of
Lines~\ref{line:rowoutput}--\ref{line:extendoutput} in
$\textsc{Branch}(L_x,(i_x,j_x),\phi_x)$ for the symbol $s_x$ for
$x\in\{1,2\}$.  As the global list $P$ depends only on the cycle
structure of the rows being considered and the lengths of the row
cycles that have been labelled already, we can assume that $P$ begins
and remains the same for $x\in\{1,2\}$, as long as both processes
label cycles of the same length.  If $\phi_x^*$ for $x\in\{1,2\}$ are partial
labellings of $L_x$ such that $\phi_1^* =\phi_2^* \phi$, then for any
symbol $s_x'$ of $L_x$ in a row cycle of length $k$ such that
$s_2'=\gamma(s_1')$, the output $\phi_x^\dag$
of $\textsc{LabelRowCycle}(L_x,(i_x,j_x),\phi_x^*,s_x)$ will
satisfy $\phi_1^\dag =\phi_2^\dag \phi$. Thus, as $\textsc{Branch}$
augments $\phi_x'$ by a series of applications of
\aref{a:rowcyclelabel}, the result will follow if the sequence of symbols
chosen for each application of \aref{a:rowcyclelabel} correspond, with
respect to $\gamma$. We have shown that the initial choices $s_1$ and $s_2$
are corresponding and as
$(L_1)_{\alpha_1^*\beta_1^*\gamma_1^*}=(L_2)_{\alpha_2^*\beta_2^*\gamma_2^*}$
the $\star$'s in the $\Succ$-least cells of
$(L_1)_{\alpha_1^*\beta_1^*\gamma_1^*}$ and
$(L_2)_{\alpha_2^*\beta_2^*\gamma_2^*}$ must occur in the same cell,
with their corresponding preimages related by $\gamma$.
\end{proof}

Before proving \tref{t:isotopytimeLS}, we require two results on the
performance of our algorithm.

\begin{lemma}\label{lem:symdouble} 
  Let $L$, $(i,j)$ and $(\alpha, \beta, \gamma)$ be the parameters of a call to \textsc{Branch}
  whilst running \aref{a:canonical}. 
  Suppose that $|\alpha|=|\beta|=|\gamma|=k$ and let
  $s$ be a symbol in a longest unlabelled row cycle in $r_i\cup r_j$. 
  Then the partial labelling $(\alpha',\beta',\gamma')$ obtained from
  $(\alpha,\beta,\gamma)$ by executing Lines~$\ref{line:rowoutput}$ and $\ref{line:extendoutput}$ of 
  \textsc{Branch} for $s$ satisfies $|\alpha'|=|\beta'|=|\gamma'|\ge 2k$.
\end{lemma}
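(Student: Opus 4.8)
The plan is to recognise that $|\alpha'|=|\beta'|=|\gamma'|$ is precisely the order of the Latin subsquare $L_{\alpha'\beta'\gamma'}$, and to derive the bound from the classical fact that a proper subsquare of a Latin square has order at most half that of the square. I would therefore reduce the lemma to two claims: first, that $S_0:=L_{\alpha\beta\gamma}$ is a Latin subsquare of $L$ of order $k$, while $S_1:=L_{\alpha'\beta'\gamma'}$ is a Latin subsquare of $L$ of order $|\alpha'|$; and second, that $S_0$ is a \emph{proper} subsquare of $S_1$, i.e.\ $S_0\subsetneq S_1$. Given these, the halving fact yields $|\alpha'|\ge 2k$ at once. (If $k=0$ the bound is vacuous, so assume $k\ge1$.)

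For the first claim, the order-$k$ statement for $S_0$ is exactly the running invariant of \textsc{Branch}: on entry, $L_{\alpha\beta\gamma}$ is a reduced Latin square of order $k=|\alpha|$. That $S_1$ is a Latin subsquare of $L$ is guaranteed by \aref{a:extend}, which returns only once no $\star$ remains in $L_{\alpha'\beta'\gamma'}$. For the second claim I would observe that \aref{a:rowcyclelabel} and \aref{a:extend} only \emph{augment} the partial labelling: they assign labels to previously unlabelled rows, columns and symbols and never overwrite an existing label. Hence every row, column and symbol labelled in $S_0$ is still labelled, with the same label, in $S_1$, and the two arrays agree on the cells of $S_0$; so $S_0$ is a subsquare of $S_1$. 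The containment is strict because the symbol $s$ passed at \Lref{line:rowoutput} lies in a row cycle that is unlabelled by $(\alpha,\beta,\gamma)$, so \aref{a:rowcyclelabel} labels $\ell_{i,j}(s)\ge 2$ fresh columns, giving $S_1$ strictly more columns than $S_0$. Note that the hypothesis that the chosen cycle is a \emph{longest} unlabelled one plays no role here; it merely records how \textsc{Branch} selects $s$.

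Finally I would supply the halving fact itself. Write $S_1$ on rows $R$, columns $C$ and symbols $\Sigma$ with $|R|=|C|=|\Sigma|=K:=|\alpha'|$, and let $S_0$ occupy a size-$k$ subset of each. Fix a column $c$ of $S_0$: the $k$ symbols of $S_0$ already occur in column $c$ within the rows of $S_0$, so every entry of $S_1$ lying in column $c$ but outside those rows avoids the $k$ symbols of $S_0$. Now read off any single row $r$ of $S_1$ not belonging to $S_0$; across the $k$ columns of $S_0$ it exhibits $k$ distinct symbols, all drawn from the $K-k$ symbols outside $S_0$, forcing $k\le K-k$, that is $K\ge 2k$.

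The only genuinely nontrivial step is this last counting argument; everything else is bookkeeping about the augmentation behaviour of Procedures~\ref{a:rowcyclelabel} and~\ref{a:extend}. I expect the main subtlety to be resisting the temptation to bound $|\alpha'|$ merely by $k+\ell_{i,j}(s)$ (counting the old columns plus the newly labelled cycle), since that weaker estimate does not in general reach $2k$; the strength of the conclusion comes entirely from the fact that closing up a subsquare under one new row cycle forces at least a doubling, which is captured by the halving fact rather than by any length comparison between cycles.
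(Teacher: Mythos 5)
Your proposal is correct and follows essentially the same route as the paper: the paper's proof also rests on $L_{\alpha\beta\gamma}$ being an order-$k$ Latin subsquare properly contained in the Latin square $L_{\alpha'\beta'\gamma'}$, and it obtains the bound by noting that a newly labelled column meeting the $k$ old rows must carry $k$ symbols outside the old subsquare, which \textsc{Extend} is then forced to label --- exactly your ``halving fact'' counting, transposed to a column rather than a row. Your packaging of that count as the classical proper-subsquare halving bound is a harmless reorganisation of the same argument.
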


\begin{proof}
 Necessarily $L_{\alpha\beta\gamma}$ is a Latin subsquare contained in
 $L_{\alpha'\beta'\gamma'}$. Thus as the row cycle containing $s$ is
 outside $L_{\alpha\beta\gamma}$, a column containing $s$ in
 $L_{\alpha'\beta'\gamma'}$ must contain at least $k$ symbols that are
 not labelled by $\gamma$.
  As \aref{a:extend} extends
  a partial labelling until it is a Latin square,
  it follows that $|\gamma'|\ge2k$.
\end{proof}

Each time that we reach \Lref{line:madeisotopism} of \aref{a:branch},
we have created an isotopism which is worthy of consideration as to
whether it creates the canonical labelling.  These isotopisms
correspond to leaves in the search tree that we are implicitly
exploring, with branches of the tree created by calls to
\aref{a:branch}.  Our next result bounds how much time is spent in
\aref{a:rowcyclelabel} and \aref{a:extend} per isotopism that is
created.

\begin{lemma}\label{lem:polyextend}
  The time taken to run \aref{a:canonical} is $O(n^2)$ times the number
  of leaves in the implicit search tree.
\end{lemma}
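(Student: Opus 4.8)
The plan is to analyse the running time edge-by-edge in the implicit search forest, which has one tree per pair of $R_{\max}(L)$. As in the proof of \tref{t:isoLsamecan}, the nodes at depth $k$ are the partial labellings produced after $k$ nested calls to \textsc{Branch}, the children of a node are the labellings created on \Lref{line:extendoutput} as $s$ ranges over the symbol set $S$, and the leaves are the full isotopisms. The first thing I would record is a structural fact: every internal node has at least two children. Indeed, an internal node is one on which \textsc{Branch} recurses, so it has an unlabelled row cycle; since $\perm{i}{j}$ is a derangement every row cycle has length at least $2$, and $S$ consists of \emph{all} symbols on the longest unlabelled cycles, whence $|S|\ge 2$. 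Consequently the number of internal nodes is at most the number of leaves, the total number of nodes is at most $2\Lambda$, and the number of edges (equivalently, executions of the body of the for-loop in \textsc{Branch}) is $O(\Lambda)$, where $\Lambda$ denotes the number of leaves.

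Next I would charge the ``per-edge'' overhead that is not attributable to \aref{a:rowcyclelabel} or \aref{a:extend}. Each pass through the loop body of \textsc{Branch} spends $O(n)$ time identifying $\CC$ and $S$ and restoring $P$ and $\curt$ on \Lref{line:backtrack}, together with one lexicographic comparison on \Lref{line:madeisotopism} of two fully labelled squares of order $n$, costing $O(n^2)$. Since there are $O(\Lambda)$ such passes, this contributes $O(n^2\Lambda)$ overall. The top-level work of \aref{a:canonical} outside the search --- reinitialising $P$ and the final comparison for each pair in $R_{\max}$ --- is likewise $O(n^2)$ per pair and hence $O(n^2\Lambda)$, since each pair contributes at least one leaf.

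The crux is the cost of \aref{a:rowcyclelabel} and \aref{a:extend}, which I would control by a telescoping argument along root-to-leaf paths. Using precomputed tables recording, for each row and column, the position of every symbol, one call of \aref{a:rowcyclelabel} on a cycle of length $\ell$ runs in $O(\ell)$ time. For an edge whose parent labels a Latin subsquare of order $k$ and whose child labels one of order $m$, I would show that the $\Succ$-ordered scan in \aref{a:extend} visits exactly the cells of the ``gnomons'' in rows and columns $k+1,\dots,m$, so that the scan together with the linear cost of the cycle labellings it triggers is $O(m^2-k^2)$. The key point is that along a single root-to-leaf path the subsquare orders strictly increase from $0$ to $n$ (indeed they at least double, by \lref{lem:symdouble}), so these gnomon ranges are pairwise disjoint and the edge costs telescope to $O(n^2)$. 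Writing $c(e)$ for the cost of edge $e$ and noting each edge lies below at least one leaf, I would finish with the charging inequality
\[
\sum_{e}c(e)\;\le\;\sum_{e}c(e)\,\bigl|\{\text{leaves below }e\}\bigr|
=\sum_{\lambda}\ \sum_{e\in\mathrm{path}(\lambda)}c(e)
=O(n^2\Lambda).
\]
The one-off preprocessing of \Lref{line:cyclen} (finding all cycle lengths, and hence all $\ell_{i,j}(s)$ and the initial $P$) is $O(n^3)$ and is charged once before the search; it is dominated by the $O(n^2\Lambda)$ search cost in the regimes where the lemma is subsequently applied.

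The main obstacle I anticipate is the third paragraph: one must verify carefully that $\Succ$ really does sweep each fresh gnomon exactly once and never re-enters the already-completed subsquare, so that the per-edge cost is genuinely $O(m^2-k^2)$ rather than $O(m^2)$. Only with the sharp $O(m^2-k^2)$ bound do the costs telescope to $O(n^2)$ along each path; a naive $O(m^2)$ estimate at each of the $O(\log n)$ levels would cost an extra logarithmic factor per leaf and break the stated bound.
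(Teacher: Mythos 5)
Your proof is correct and follows essentially the same route as the paper's: the paper also amortizes the cost per leaf by observing that along the extension process each cell is checked at most once for a $\star$ and each of the at most $n$ cycle-labellings costs $O(n)$, giving $O(n^2)$ per isotopism found. You simply make explicit several points the paper leaves implicit — that the number of edges is $O(\Lambda)$ because every internal node has at least two children, that the $\Succ$-scans along a root-to-leaf path cover disjoint gnomons and hence telescope, and that the per-edge overhead and preprocessing are absorbed — so this is a more careful write-up of the same argument rather than a different one.
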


\begin{proof}
Let $L$ be the Latin square input and $(\alpha,\beta,\gamma)$ be the
partial labelling output of \aref{a:extend}. Each cell in
$L_{\alpha\beta\gamma}$ is checked at most once to see if it contains
a $\star$. For each $\star$ found,
\aref{a:rowcyclelabel} labels one row cycle, which is
done in $O(n)$ time. As $L_{\alpha\beta\gamma}$ has at most $n^2$
cells and there are at most $n$ unlabelled symbols, the total time
taken is $O(n^2)$. Since
the process of extending the partial labelling is fixed for any choice of
partial labelling made by the branch procedure, the time required to find any
single isotopism is uniformly bounded by $O(n^2)$.
\end{proof}

We can now bound the worst case performance of our algorithm.

\begin{theorem}\label{t:timetoextendpartiallabelling}
  Let $L$ be a Latin square of order $n$ and let $\ell(L)$ be the
  length of the longest row cycle in $L$. \aref{a:canonical}
  determines the canonical labelling of $L$ in
  $O(n^{\log_2{n}-\log_2{\ell(L)} + 3})$ time.
\end{theorem}

\begin{proof}
  We consider the search tree implicit in \aref{a:canonical}.
  The root is the empty partial labelling and each leaf corresponds
  to an isotopism.
  Each non-leaf state $(\alpha,\beta,\gamma)$ in the search tree is
  extended, for each $s\in S$, to a new child state
  $(\alpha',\beta',\gamma')$ by invoking Lines~\ref{line:rowoutput}
  and \ref{line:extendoutput} of $\textsc{Branch}$. By
  \lref{lem:polyextend}, each leaf is treated in $O(n^2)$ time.
  Therefore, to estimate the complexity of \aref{a:canonical} it
  suffices to bound the number of leaves in the search tree.
  
  We consider the initial empty partial labelling of $L$ to have depth
  0 and let $(i,j) \in R_{\max}$.  By assumption, $r_i \cup r_j$
  contains at least one row cycle of length $\ell(L)$. Thus,
  \Lref{line:rowoutput} of $\textsc{Branch}$ labels a symbol and its
  row cycle which must be of length $\ell(L)$ and
  \Lref{line:extendoutput} of $\textsc{Branch}$ extends the resulting
  partial labelling to that of a Latin square, which necessarily
  must have order at least $\ell(L)$.
  By \lref{lem:symdouble} the child of any state has at least twice as many
  labelled symbols as the state itself.  It follows that at depth $k \geq 1$
  there are at least $2^{k-1}\ell(L)$ labelled symbols.
  Thus the search tree has depth $d \leq \log_2(n/\ell(L))+1$.
  Each state has $|S|\le n$ children, so there are $O(n^d)$ leaves.
  Each of them requires at most $O(n^2)$ steps to reach it.
  So the time taken to canonically label $L$ is bounded above by $O(n^{d+2})$.
The result follows.
\end{proof}

In \sref{s:con} we will discuss some examples for which our algorithm
will not run in polynomial time. However, we can do better in the average case:

\begin{theorem}\label{t:polycanonical}
  \aref{a:canonical} will find the canonical labelling of a random Latin
  square $L$ of order $n$ in average-case $O(n^5)$ time.
\end{theorem}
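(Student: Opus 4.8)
The plan is to bound the expected running time by splitting over the choice of $L$ according to a ``good'' event on which \aref{a:canonical} provably terminates quickly, and to show that the complementary event is so rare that even the worst-case bound of \tref{t:timetoextendpartiallabelling} contributes only $o(1)$ to the average. Fix small $\eps,\eps'>0$ with $1-\eps>\tfrac12+\eps'$ (for concreteness take $\eps=\eps'=\tfrac18$), set $m=n^{1/2}\log^{1/2+\eps'}n$, and define the good event $G$ to be the conjunction of (A) $\ell(L)\ge n^{1/2}\log^{1-\eps}n$ and (B) $L$ has no proper subsquare of order larger than $m$. By \tref{t:longcyc} the first two rows alone contain a cycle of length at least $n^{1/2}\log^{1-\eps}n$ except with probability $o(\exp(-\tfrac14 n^{1/2}))$, so (A) fails with at most this probability, and by \tref{t:largeSS} event (B) fails with probability $\exp(-\omega(n\log^2 n))$. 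Hence $\P(\bar G)=o(\exp(-\tfrac14 n^{1/2}))$.

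First I would analyse the good event. Every pair $(i,j)\in R_{\max}(L)$ has its longest row cycle of length exactly $\ell(L)$, since a lexicographically maximum cycle structure must begin with the globally longest cycle length. Consider $\textsc{Branch}(L,(i,j),(\star,\star,\star))$ for such a pair: \Lref{line:rowoutput} labels a row cycle of length $\ell(L)$, and \Lref{line:extendoutput} extends this to a genuine Latin subsquare of $L$ (up to isotopism). That subsquare contains the $\ell(L)$ columns of the just-labelled cycle, so its order is at least $\ell(L)$. By (A) and the choice of $\eps,\eps'$ we have $\ell(L)\ge n^{1/2}\log^{1-\eps}n>m$, so the subsquare has order strictly greater than $m$; by (B) the only such subsquare is $L$ itself. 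Thus after one branch--extend step the labelling is already an isotopism, the test $|\alpha'|<n$ in \aref{a:branch} fails, no recursive branch occurs, and the search tree attached to each $(i,j)\in R_{\max}$ has depth $1$ and therefore at most $|S|\le n$ leaves.

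To assemble the bound, note that by \lref{lem:polyextend} each leaf is processed in $O(n^2)$ time, so each pair $(i,j)$ contributes $O(n^3)$; summing over the at most $\binom n2<n^2$ pairs in $R_{\max}$, and adding the $O(n^3)$ preprocessing in \Lref{line:cyclen} to find all row-cycle lengths and $R_{\max}$, shows that on $G$ the algorithm runs in $O(n^5)$ time. On $\bar G$ we invoke \tref{t:timetoextendpartiallabelling}, and since $\ell(L)\ge2$ always, the running time is at most $O(n^{\log_2 n+2})=\exp(O(\log^2 n))$. Therefore
\[
\E[T]\le O(n^5)\cdot\P(G)+\exp\big(O(\log^2 n)\big)\cdot o\big(\exp(-\tfrac14 n^{1/2})\big)=O(n^5)+o(1)=O(n^5),
\]
because $\log^2 n=o(n^{1/2})$ makes the second term vanish.

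The main obstacle is the structural step that pins the search-tree depth to $1$ on the good event: one must verify that the extension phase genuinely produces a Latin subsquare whose order inherits the length of the long labelled cycle, so that the absence of large proper subsquares forces the labelling to complete immediately rather than after $\Theta(\log n)$ doublings. Everything else is routine: once depth $1$ is established the good-case count is a product of three polynomial factors, and the tail estimate is benign because the super-polynomial worst-case bound $n^{\log_2 n}=\exp(O(\log^2 n))$ is dwarfed by the $\exp(-\tfrac14 n^{1/2})$ decay supplied by \tref{t:longcyc}.
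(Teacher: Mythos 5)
Your proposal is correct and follows essentially the same route as the paper: condition on the conjunction of Theorem~\ref{t:largeSS} (no proper subsquare larger than $n^{1/2}\log^{1/2+\eps}n$) and Theorem~\ref{t:longcyc} (a row cycle of length at least $n^{1/2}\log^{1-\eps}n$ in the first two rows), deduce that each call to \textsc{Branch} terminates without recursion so the search tree has depth one, count $O(n^2)$ pairs times $n$ symbols times the $O(n^2)$ per-leaf cost from Lemma~\ref{lem:polyextend}, and absorb the bad event using the worst-case bound of Theorem~\ref{t:timetoextendpartiallabelling} against the $o(\exp(-\frac14 n^{1/2}))$ failure probability. The only differences (two separate $\eps,\eps'$ instead of one $\eps<1/4$, and explicitly noting $\ell(L)\ge 2$) are cosmetic.
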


\begin{proof}
Fix $0<\eps<1/4$. Let $L$ be a random Latin square of order $n$
and let $\ell(L)$ be the length of a longest row cycle in $L$.  By
\tref{t:largeSS}, with high probability $L$ will not contain a proper
Latin subsquare of size larger than
$n^{1/2}(\log{n})^{1/2+\eps}$. Also, with high probability,
$\ell(L)\ge n^{1/2}(\log{n})^{1-\eps}$ by \tref{t:longcyc}, which
means that no row cycle of length $\ell(L)$ is contained in a proper
subsquare of $L$.  It follows that any isotopism found via
\aref{a:canonical} requires a single call to $\textsc{Branch}$. 
In other words, for any $(i,j) \in R_{\max}$, we will find that
$\textsc{Branch}(L,(i,j),(\star,\star,\star))$
returns an isotopism without executing Line~\ref{line:newbranch}.
There are at most $\binom{n}{2}$
pairs in $R_{\max}$ and for each of them there can be at most $n$
symbols that are in a longest row cycle within that pair of rows.  By
\lref{lem:polyextend}, it takes $O(n^2)$ time to find an isotopism
given a symbol in a longest row cycle.  Hence, with high probability,
the time taken to canonically label $L$ is $O(n^2 n n^2)= O(n^5)$.

To check that the bad cases do not affect our average time we note that,
by Theorems~\ref{t:longcyc} and~\ref{t:timetoextendpartiallabelling},
they contribute no more than
\[
O\big(\exp(-\dfrac14n^{1/2})n^{\log_2 n+3}\big)=O\big(\exp(-\dfrac14n^{1/2}+O(\log^2n))\big)=o(1)
\]
to the average time.
\end{proof}

\tref{t:isotopytimeLS} now follows directly from
Theorems~\ref{t:timetoextendpartiallabelling} and~\ref{t:polycanonical}.

\section{Steiner triple systems}\label{s:STS}

In this section we describe how to adapt the procedures defined in \sref{s:canonicalLS} to Steiner quasigroups.
A quasigroup $(Q,\circ)$ is \emph{idempotent} if $x\circ x = x$ for
all $x\in Q$ and is \emph{commutative} if $x\circ y = y\circ x$
for all $x,y\in Q$. It is semisymmetric if it satisfies the implications
\[
x\circ y= z\ \Longrightarrow\ y\circ z = x\ \Longrightarrow\ z\circ x =y,
\]
for $x,y,z \in Q$. A quasigroup that is commutative and semisymmetric
is called \emph{totally symmetric}. Semisymmetric idempotent quasigroups
are equivalent to Mendelsohn triple systems. Totally symmetric
idempotent quasigroups are called \emph{Steiner quasigroups}
and are equivalent to Steiner triple systems.

A \emph{Steiner triple system} is a pair $(V,B)$, where $V$ is a
finite set whose elements are called vertices and $B$ is a set of
$3$-subsets of $V$, whose elements are called blocks, such that every
pair of vertices is in exactly one block. A \emph{subsystem} of a
Steiner triple system $(V,B)$ is a Steiner triple system $(V',B')$
such that $V' \subseteq V$ and $B' \subseteq B$. Two Steiner triple
systems, $(V_1,B_1)$ and $(V_2,B_2)$, are isomorphic if there is a
mapping $\phi: V_1 \rightarrow V_2$ such that $\{x,y,z\}\in B_1$ if and
only if $\{\phi(x),\phi(y),\phi(z)\}\in B_2$.
Given a Steiner triple system $(V,B)$, we can define a Steiner
quasigroup with multiplication on the set $V$ as $x\circ x = x$ for
all $x \in V$ and $x\circ y = z$ for all $\{x,y,z\} \in B$.

Let $L$ be the Cayley table of a Steiner quasigroup and $(V,B)$ be the
corresponding Steiner triple system.
The \emph{cycle graph} $G_{a,b}$ is the graph with vertices $V
\setminus \{a,b,c\}$, where $\{a,b,c\} \in B$ and whose edges are
precisely the pairs $\{x,y\}$ such that either $\{a,x,y\} \in B$ or
$\{b,x,y\} \in B$. The graph $G_{a,b}$ is simple, by the Steiner
property, and is necessarily a disjoint union of cycles. Each row
cycle in rows $a$ and $b$ of $L$ corresponds to a cycle in $G_{a,b}$.
However, this relationship is two-to-one, as we now show.

\begin{lemma}\label{l:rowcycleSTS}
  In the Cayley table of a Steiner quasigroup $(V,\circ)$,
  consider two rows with indices $r$ and $r'$. Let $C,S \subseteq
  V\setminus\{r,r'\}$ be the columns and symbols, respectively, of a
  row cycle $R$ between row $r$ and row $r'$.  Then $C$ and $S$ are
  disjoint and furthermore there exists a distinct row cycle $R'$,
  whose columns are $S$ and whose symbols are $C$.
\end{lemma}

\begin{proof}
  The claim that there exists a row cycle $R'$ with columns $S$ and
  symbols $C$ is immediate from the total symmetry of the Steiner
  quasigroup and the existence of $R$.  Furthermore, if $C$ and $S$
  are disjoint, then it is immediate that $R$ and $R'$ are distinct.
  Hence, it suffices to show that $C$ and $S$ are disjoint.  Let
  $C=\{c_1,\ldots,c_{k}\}$ and $S=\{s_1,\ldots, s_{k}\}$, where $k$ is
  the length of $R$. All subscripts in this proof will be indexed
  modulo $k$.  Without loss of generality, let $R$ consist of the
  Latin triples $(r,c_i,s_i)$ and $(r',c_i,s_{i+1})$ for $i=1,\ldots,k$.
  Suppose for a contradiction that $s_i=c_j$ for some $i,j$. Then
  $s_{j}=r\circ c_j=r\circ s_i=c_{i}$ and hence
  $s_{i+1}=r'\circ c_i=r'\circ s_j= c_{j-1}$. It follows
  that $s_{i+x}=c_{j-x}$ for any integer $x$. If $j-i$ is even, then
  $s_y=c_y$ for $y=\frac{i+j}{2}$ because $i+\frac{j-i}{2}=y=j-\frac{j-i}{2}$.
  If $j-i$ is odd, then $s_y=c_{y-1}$ for
  $y=\frac{i+j+1}{2}$, since
  $i+\frac{j-i+1}{2}=y$ and $j-\frac{j-i+1}{2}=y-1$.
  In the former case, the elements of $(r,c_y,s_y)$ are not distinct and in
  the latter case the elements of $(r',c_{y-1},s_y)$ are not distinct.
  However, by construction the only triples in rows $r$ and $r'$ with
  repeated elements are $(r,r,r)$ and $(r',r',r')$. These have been
  excluded by the conditions of the Lemma. 
\end{proof}

It is necessary to exclude $r,r'$ from the columns and symbols sets of
the row cycle in \lref{l:rowcycleSTS}, as any row cycle between row
$r$ and row $r'$ with either $r$ or $r'$ as a column or symbol must 
be the following $3$-cycle with columns and symbols $\{r,r',s\}$,
  \[
  \begin{array}{c|ccc}
    &r&r'&s\\
    \hline
    r&r&s&r'\\
    r'&s&r'&r 
  \end{array}
  \]
where $r\circ r'=s$.
In fact, this row cycle is contained in an order $3$ subsquare whose
row, column and symbol sets are all $\{r,r',s\}$. For each $r,r'$, we
call the unique row cycle of length $3$ containing $r$ and $r'$ {\em
  singular}, while the remaining row cycles of rows $r$ and $r'$ are
{\em non-singular}.  \lref{l:rowcycleSTS} implies that for every
non-singular row cycle $R$ with columns $C$ and symbols $S$ there
exists a (unique) row cycle $R'$ with columns $S$ and symbols $C$;
such a pair $(R,R')$ is called an {\em inverse pair}.  Each inverse
pair in rows $a$ and $b$ corresponds to a single cycle in $G_{a,b}$.
Every non-singular row cycle in the Cayley table of a
Steiner quasigroup of order $n$ has length at most $(n-3)/2$.
   
For a Steiner triple system $(V,B)$, every cycle graph $G_{a,b}$ is
$2$-regular. Colbourn, Colbourn and Rosa \cite{CCR83} (see also \cite{CR91}) showed that every
$2$-regular graph is the cycle graph of some pair of elements in some
Steiner triple system:

\begin{theorem}
  For any $2$-regular graph $G=(V',E)$ on $n-3$ vertices, there is a Steiner
  triple system $(V,B)$ of order $n$ such that $G_{a,b} =G$ for some $a,b \in
  V\setminus V'$. 
\end{theorem}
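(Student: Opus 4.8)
The plan is to invert the passage from a Steiner triple system to its cycle graph. First I would record the structural fact, already implicit in the discussion of inverse pairs, that $G_{a,b}$ is the edge-disjoint union of two perfect matchings on $V'=V\setminus\{a,b,c\}$. By the Steiner property each point $x\in V'$ lies in a unique block with $a$, and the third point of that block again lies in $V'$, since the pairs $\{a,b\}$, $\{a,c\}$, $\{b,c\}$ are all consumed by the block $\{a,b,c\}$; the same holds with $b$ in place of $a$. Calling these matchings $M_a$ and $M_b$, we have $G_{a,b}=M_a\cup M_b$, and the edges of any component alternate between $M_a$ and $M_b$, so every cycle of a cycle graph has even length (consistent with each inverse pair contributing a single cycle). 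Conversely, a $2$-regular graph is a union of two edge-disjoint perfect matchings exactly when all its cycles are even, in which case one recovers $M_a,M_b$ by $2$-colouring the edges of each cycle alternately. Interpreting the target $G$ accordingly, it therefore suffices to build an STS on $V=V'\cup\{a,b,c\}$ whose blocks through $a$ are $\{a,b,c\}$ together with $\{a\}\cup e$ for $e\in M_a$, and whose blocks through $b$ are $\{a,b,c\}$ together with $\{b\}\cup e$ for $e\in M_b$.

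Second, I would reformulate what remains as a completion problem. The prescribed blocks already cover every pair meeting $\{a,b\}$, and inside $V'$ they cover precisely the pairs that are edges of $G$. What is left is to cover, by triples lying entirely inside $V'\cup\{c\}$, every pair $\{c,x\}$ with $x\in V'$ and every \emph{non}-edge of $G$ inside $V'$, each exactly once. Equivalently, I must decompose into triangles the graph $H$ on $V'\cup\{c\}$ in which $c$ is joined to all of $V'$ while the graph induced on $V'$ is the complement of $G$. The obvious necessary conditions are reassuring: since $n\equiv 1$ or $3\pmod 6$, every vertex of $H$ has even degree ($\deg_H(c)=n-3$ and $\deg_H(x)=n-5$ for $x\in V'$), and $|E(H)|=\binom{n-3}{2}$ is divisible by $3$.

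The hard part is producing this triangle decomposition, equivalently completing the partial system. The graph $H$ is dense, with minimum degree $n-5$, so for large $n$ one could invoke a triangle-decomposition theorem for graphs of high minimum degree meeting the divisibility conditions, and settle the finitely many small orders by hand. A more self-contained route, and the one I would expect of the original argument, is induction on the number of cycles: peel off one even cycle $C$ of length $2k$, realise the remaining graph as a cycle graph in an STS of order $n-2k$ by induction, and then adjoin $2k$ new points while extending the triple system so that the pair $a,b$ acquires exactly the extra cycle $C$. Since the Doyen--Wilson embedding bound does not always license such an extension, the inductive step has to be performed by an explicit construction that threads the new points into the existing blocks along $C$. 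Verifying that this can be arranged for every admissible order and every distribution of cycle lengths, together with the base cases, is where essentially all of the work lies, and is the step I would expect to be the main obstacle.
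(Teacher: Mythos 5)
The paper does not actually prove this statement: it is quoted from Colbourn, Colbourn and Rosa \cite{CCR83} (the reference titled ``Completing small partial triple systems''), so there is no internal proof to compare against, and your proposal must be judged on its own. Your preliminary analysis is sound. $G_{a,b}$ is indeed the edge-disjoint union of the two perfect matchings $M_a$ and $M_b$ on $V\setminus\{a,b,c\}$ induced by the blocks through $a$ and through $b$, these alternate around every component, so every cycle of a cycle graph is even; consequently the theorem as literally stated (``any $2$-regular graph'') can only be correct when read as ``any $2$-regular graph all of whose cycles are even'', exactly as you interpret it. Your reduction is also correct: after prescribing $\{a,b,c\}$, the blocks $\{a\}\cup e$ for $e\in M_a$ and $\{b\}\cup e$ for $e\in M_b$, what remains is precisely a triangle decomposition of the graph $H$ on $V'\cup\{c\}$ in which $c$ is complete to $V'$ and $V'$ induces the complement of $G$, and your divisibility checks (all degrees even, $\binom{n-3}{2}\equiv 0 \bmod 3$ when $n\equiv 1,3 \bmod 6$) are right.

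The gap is that you never produce this decomposition, and that decomposition \emph{is} the theorem --- it is exactly the ``completing a small partial triple system'' problem to which the cited paper is devoted. Neither of your two suggested routes closes it. The high-minimum-degree triangle-decomposition theorems do apply to $H$ (minimum degree $n-5$ on $n-2$ vertices), but only for $n$ beyond a non-explicit threshold, so ``settle the finitely many small orders by hand'' is not an available move; and those theorems are far heavier machinery than the result requires. The inductive route (peel off a cycle of length $2k$, realise the rest in an STS of order $n-2k$, re-attach $2k$ new points) is only gestured at: as you yourself concede, the attachment step is not licensed by general embedding results and would need an explicit construction that you do not supply, nor do you supply base cases. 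What you have is a correct reformulation and an accurate diagnosis of where the work lies, not a proof.
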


A Steiner triple system $(V,B)$ is \emph{perfect} if $G_{a,b}$ is a
single cycle for all $a,b \in V$. There are only a handful of orders
$n$ for which a perfect Steiner triple system of order $n$ are known
to exist; perfect Steiner triple systems of order $n$ for $n=7,9,25,
33$ have been known for some time, see \cite{CR91}.
Perfect Steiner triple systems of order $n$ were then constructed for
$$n\in\{79, 139, 367, 811, 1531, 25771, 50923, 61339, 69991\}$$ in
\cite{GGM99} and for $n=135859$ in \cite{FGG07}. A countably
infinite perfect Steiner triple system was constructed in \cite{CW12}.

A $(k,k')$-configuration in a Steiner triple system is a set of $k'$
triples that covers at most $k$ points. It is known (see
\cite[Thm~1.1]{FGG07}) that any Steiner triple system of order $n\geq k+3$
has a $(k+3,k)$-configuration. This has motivated interest in
Steiner triple system avoiding $(k+2,k)$-configurations. A
$(3,1)$-configuration is a single triple, while a $(4,2)$-configuration 
would require two triples to share two points, which is impossible
in a Steiner triple system. So it is only of interest to consider
$(k+2,k)$-configurations for $k \geq 3$. 
Any nonsingular row cycle $R$ of length $k$ forms a
$(2k+2,2k)$-configuration.  Each of the $2k$ cells of $R$ corresponds
to a different triple and between them these triples cover $2k+2$
points corresponding to $k$ columns, $k$ symbols and two rows.  Not
every $(2k+2,2k)$-configuration arises from a row cycle of length $k$
in this way. Erd\H{o}s conjectured the following, which has recently
been proven in \cite{KSSS22a}.

\begin{theorem}
  For any fixed integer $g$, there exists an integer $N$ such that for
  any integer $n\geq N$ satisfying $n \equiv 1,3\pmod 6$ there exists a
  Steiner triple system of order $n$ with no $(k+2,k)$-configuration
  for $2\leq k\leq g$.
\end{theorem}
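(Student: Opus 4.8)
The plan is to prove this existence result by the probabilistic method, combining the semi-random ``nibble'' process with iterative absorption. First I would reformulate the statement in the language of triangle decompositions: a Steiner triple system of order $n$ is precisely a partition of the edges of $K_n$ into triangles, and a $(k+2,k)$-configuration is a set of $k$ triangles from the decomposition spanning at most $k+2$ vertices. Forbidding all $(k+2,k)$-configurations for $2\le k\le g$ is therefore a \emph{high-girth} condition on the triangle decomposition, and since the number of distinct shapes of such configurations depends only on $g$, the goal becomes constructing a triangle decomposition of $K_n$ that avoids every member of a fixed finite list of forbidden local patterns.

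The construction would proceed in three phases. In the first phase I would set aside, at random, a small \emph{absorbing} structure: a reserved collection of triangles flexible enough to complete many different near-decompositions, while itself being free of short configurations. In the second phase, on the remaining edges I would run the random greedy (Rödl nibble) process, in each round selecting a sparse random set of candidate triangles, keeping a partial decomposition among them, and crucially rejecting any triangle whose inclusion would close up one of the forbidden configurations. The central quantitative claim is that such rejections are rare: a pseudorandom partial decomposition is locally treelike, so the expected number of almost-complete forbidden configurations stays small, only a negligible fraction of candidate triangles is ever blocked, and the process covers all but $o(n^2)$ of the edges. I would establish this by tracking the relevant statistics --- the number of available triangles through each edge and the counts of dangerous partial configurations --- with the differential-equation method together with martingale concentration. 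In the third phase the pre-built absorber completes the few leftover edges into triangles, with the divisibility hypothesis $n\equiv 1,3\pmod 6$ entering only here, as it is exactly the condition guaranteeing the remainder can be finished into a genuine Steiner triple system.

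The main obstacle, and the reason this is a deep theorem rather than a routine nibble argument, is controlling configurations during the absorption phase. The nibble keeps the system high-girth ``on average'' because it is pseudorandom, but the absorber is a rigid, pre-committed gadget, so one must engineer it to be high-girth itself and to attach to its surroundings in a controlled, treelike fashion, ensuring that plugging it in never completes a short configuration together with the bulk of the decomposition; this interaction is the heart of the argument in \cite{KSSS22a}. A secondary difficulty, absent from the pure design-existence problem, is that the forbidden configurations overlap and interact, so the concentration analysis must simultaneously follow a whole growing family of partial-configuration counts rather than a single extension statistic.
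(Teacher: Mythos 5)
This theorem is not proved in the paper at all: it is Erd\H{o}s's conjecture on locally sparse Steiner triple systems, and the authors simply quote it as a known result, citing its recent proof in \cite{KSSS22a}. So there is no ``paper's own proof'' to compare your argument against, and you should be aware that you are being asked to reprove a major recent theorem, not a lemma internal to this paper.

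Your outline does correctly identify the strategy of the actual proof in \cite{KSSS22a} --- reformulating the problem as a high-girth triangle decomposition of $K_n$, running a constrained random greedy/nibble process that refuses to complete any forbidden $(k+2,k)$-configuration, and finishing with an absorber, with the divisibility condition $n\equiv 1,3\pmod 6$ entering only at the completion stage. You also correctly locate the two genuine difficulties: making the absorber itself respect the girth condition and interact with the bulk without closing short configurations, and tracking an entire family of interacting partial-configuration counts rather than a single statistic. However, as written this is a research programme rather than a proof: every quantitatively hard step is asserted rather than established. In particular, the claim that ``rejections are rare'' requires proving concentration for the counts of \emph{every} partially-built forbidden configuration simultaneously through the whole process (this is where the self-correcting/differential-equation analysis must actually be carried out), and the claim that an absorber can be engineered to be both sufficiently flexible and girth-compatible is precisely the hardest innovation of \cite{KSSS22a}, occupying the bulk of that paper. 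Nothing in your sketch would fail in principle, but nothing in it is yet a proof; for the purposes of the present paper the correct move is simply to cite \cite{KSSS22a}, as the authors do.
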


\subsection{Canonical labelling of Steiner triple systems}

We could apply the algorithm developed in \sref{s:canonicalLS},
without change, to determine a canonical labelling of the Cayley table
of a Steiner quasigroup. This could be viewed as some sort of canonical
form of the corresponding Steiner triple system.  However,
the result might not be idempotent or totally symmetric, in which
case it is certainly not a representative of the isomorphism class
of the initial Steiner quasigroup.
To avoid this undesirable situation, in this subsection we propose two
possible canonical labellings by isomorphisms
$(\alpha,\beta,\gamma)=(\alpha,\alpha,\alpha)$. We simply refer to
such a labelling by the single permutation $\alpha$, and refer to the
Latin square $L_{\alpha\alpha\alpha}$ simply as $L_\alpha$. Partial
labellings are treated in the analogous way. Note that isomorphisms
preserve idempotency and total symmetry, and hence map Steiner
quasigroups to Steiner quasigroups.

Our first canonical labelling technique involves ``lifting'' an
isomorphism from the isotopism produced by our canonical labelling
algorithm in the previous section. It exploits an observation from
\cite{WI05} that isotopic idempotent symmetric Latin squares are
isomorphic.

\begin{theorem}\label{t:lift}
  Let $L$ be a Steiner quasigroup and let $(\alpha,\beta,\gamma)$ be
  the canonical labelling of $L$ obtained by \aref{a:canonical}. Then
  $L_\gamma$ is a Steiner quasigroup that serves as a canonical
  representative of the isomorphism class of $L$.
\end{theorem}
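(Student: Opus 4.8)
The plan is to show two things: first, that $L_\gamma$ is a Steiner quasigroup, and second, that the map $L \mapsto L_\gamma$ depends only on the isomorphism class of $L$, so that isomorphic inputs yield identical outputs.

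For the first part, I would invoke the observation from \cite{WI05} that isotopic idempotent symmetric Latin squares are isomorphic. Since $L$ is a Steiner quasigroup, it is idempotent and totally symmetric (in particular commutative/symmetric). The isotopism $(\alpha,\beta,\gamma)$ produces $L_{\alpha\beta\gamma}$, which is isotopic to $L$ and hence is also an idempotent symmetric Latin square after an appropriate argument. The key step is to argue that applying the symbol permutation $\gamma$ as an \emph{isomorphism} $(\gamma,\gamma,\gamma)$ recovers the same square $L_{\alpha\beta\gamma}$, i.e. that $L_\gamma = L_{\alpha\beta\gamma}$. This should follow because, for an idempotent symmetric Latin square, the structure of the canonical labelling forces the three component permutations to agree up to the identification used in \cite{WI05}: once $\gamma$ is fixed and the square is symmetric and idempotent, the row and column permutations $\alpha,\beta$ are determined and must equal $\gamma$ in effect. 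Hence $L_\gamma$ is isotopic (indeed isomorphic) to $L$, and since isomorphisms preserve idempotency and total symmetry (as noted just before the theorem), $L_\gamma$ is again a Steiner quasigroup.

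For the second part, I would appeal to \tref{t:isoLsamecan}: if $L_1$ and $L_2$ are isomorphic (hence isotopic) Steiner quasigroups, then running \aref{a:canonical} on each yields the same isotope $L_1' = L_2'$. Writing $(\alpha_x,\beta_x,\gamma_x)$ for the canonical labelling of $L_x$, we have $(L_1)_{\alpha_1\beta_1\gamma_1} = (L_2)_{\alpha_2\beta_2\gamma_2}$. Combining this with the identity $L_x{}_{\gamma_x} = (L_x)_{\alpha_x\beta_x\gamma_x}$ established in the first part gives $(L_1)_{\gamma_1} = (L_2)_{\gamma_2}$, which is exactly the assertion that the canonical representative is well defined on the isomorphism class. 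Because $L_{\gamma}$ is itself a Steiner quasigroup by the first part, it is a genuine representative of the isomorphism class of $L$, not merely of its isotopism class.

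The main obstacle I anticipate is the first part, specifically the claim $L_\gamma = L_{\alpha\beta\gamma}$. This requires understanding precisely why the canonical labelling's three permutations collapse to a single isomorphism when the input is idempotent and totally symmetric. I expect the argument to hinge on the structure of \aref{a:rowcyclelabel}: in that procedure the symbol and its column are assigned the same label (via $\gamma'(\sigma)\gets\lambda$ and $\beta'(b)\gets\lambda$), and the row label is chosen so that the first row and column of $L_{\alpha\beta\gamma}$ coincide. For a symmetric square these symmetry constraints in the labelling process should force $\alpha=\beta=\gamma$ as actions, so that the isotopism is in fact an isomorphism and $L_\gamma=L_{\alpha\beta\gamma}$. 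Care is needed because the procedure as written labels rows, columns and symbols by separate bookkeeping; the crux is verifying that idempotency and total symmetry of $L$ guarantee these three agree, rather than merely producing an isotopic square.
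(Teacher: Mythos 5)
Your overall architecture (show $L_\gamma$ is a Steiner quasigroup, then show well-definedness) is sensible, and you correctly identify \tref{t:isoLsamecan} and the observation from \cite{WI05} as the key ingredients. However, the identity $L_\gamma = L_{\alpha\beta\gamma}$ on which both halves of your argument rest is false, and it cannot be repaired. The output $L_{\alpha\beta\gamma}$ of \aref{a:canonical} is a \emph{reduced} Latin square: its first row is $1,2,\dots,n$, so it cannot be idempotent for $n\ge 2$ (cells $(1,2)$ and $(2,2)$ would both contain the symbol $2$ in the same column). By contrast $L_\gamma$, being the image of a Steiner quasigroup under the isomorphism $(\gamma,\gamma,\gamma)$, is always idempotent. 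The paper makes exactly this point just before the subsection: the canonical form produced by \aref{a:canonical} ``might not be idempotent or totally symmetric.'' One can also see directly from \aref{a:rowcyclelabel} that the three components of the canonical labelling do not collapse to a single permutation: there $\gamma'(\sigma)$ and $\beta'(b)$ receive the same label $\lambda$ even though $b\ne\sigma$ (since $i\circ b=\sigma$ forces $b\ne\sigma$ in a Steiner quasigroup when $b \ne i$), so $\beta\ne\gamma$ as partial permutations.

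The paper's proof needs no such identity. Given isotopic Steiner quasigroups $L$ and $L'$ with canonical labellings $(\alpha,\beta,\gamma)$ and $(\alpha',\beta',\gamma')$, \tref{t:isoLsamecan} gives $L_{\alpha\beta\gamma}=L'_{\alpha'\beta'\gamma'}$, hence $L$ and $L'$ are related by the composed isotopism with components $\alpha'\alpha^{-1}$, $\beta'\beta^{-1}$, $\gamma'\gamma^{-1}$ --- which is generally \emph{not} an isomorphism. The role of \cite[Lem.\,6]{WI05} is precisely to upgrade this: since $L$ and $L'$ are both idempotent and symmetric, any isotopism between them can be replaced by the isomorphism given by its symbol component, so $L=L'_{\gamma'\gamma^{-1}}$ and hence $L_\gamma=L'_{\gamma'}$. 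In other words, the lemma from \cite{WI05} is applied to the \emph{composite} isotopism relating two Steiner quasigroups, not to the canonical labelling of a single square; that is the step your proposal is missing, and it is what makes the claim $L_\gamma=L_{\alpha\beta\gamma}$ unnecessary.
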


\begin{proof}
  Suppose that $L$ and $L'$ are isotopic Steiner quasigroups and let
  $L_{\alpha\beta\gamma}$ and $L'_{\alpha'\beta'\gamma'}$ be their
  respective canonical forms obtained from \aref{a:canonical}. By
  \tref{t:isoLsamecan}, these forms must be equal and hence
  $L=L'_{(\alpha'\alpha^{-1})(\beta'\beta^{-1})(\gamma'\gamma^{-1})}$.
  By \cite[Lem.\,6]{WI05} it follows that $L=L'_{(\gamma'\gamma^{-1})}$
  and hence $L_{\gamma}=L_{\gamma'}$. The result follows.
\end{proof}

Of course, given total symmetry, it would be equally valid to chose $L_\alpha$
or $L_\beta$ to be the canonical representative.

\subsubsection{Canonical Steiner subsystems}

The techniques in \sref{s:canonicalLS} determined canonical labellings
of Latin squares by labelling Latin subsquares and then branching if
the whole square had not been labelled. In the Cayley table of a
Steiner quasigroup there may be Latin subsquares which do not
correspond to Steiner subsystems. Such subsquares should not pose
obstacles to fast labelling, although they might if we used the
technique behind \tref{t:lift}. In this subsection we describe a
labelling algorithm that determines a canonical isomorphism of a
Steiner quasigroup in quasipolynomial time. This algorithm is similar
to the algorithm in \sref{s:canonicalLS} in that we will label row
cycles in a particular order. However, we will maintain the idempotent
totally symmetric structure of the Steiner quasigroup. The only
subsquares which might cause us to branch are those that correspond to
Steiner subsystems.

We first expand on some notions related to Steiner quasigroups from
earlier in the section.
Let $L$ be the Cayley table of a Steiner quasigroup.  Let $(R,R')$ be
an inverse pair in $L$.  The inverse pair $(R, R')$ is
\emph{contiguous} if $R$ is contiguous and the columns of $R\cup R'$
are an interval in $[n]$. Given the inverse pair $(R,R')$ we say
that $R$ is the \emph{left} row cycle and $R'$ is the \emph{right}
row cycle if every column in $R$ has a lower index than every column
in $R'$.

An inverse pair is \emph{paired increasing} if it is contiguous and the left
row cycle is increasing. Due to the total symmetry of the Steiner quasigroup
the remaining cycle is uniquely determined, see \fref{f:inversepairlabelled}.
We say that a Steiner quasigroup is in \emph{paired standard form} if
\begin{itemize}
  \item every inverse pair within the first two rows is paired increasing,
  \item the singular row cycle in the first two rows
    has $\{1,2,3\}$ as its symbols,
  \item the non-singular row cycles in the first two rows are sorted
    by length, with shorter cycles in columns with higher indices,
  \item $\Gamma_{1,2}(L)$ is lexicographically maximum among
    the cycle structures of all pairs of rows.
\end{itemize}

\begin{figure}[h]
  \[
  \begin{array}{c|cccc|cccc}
    &t&{t+1}&\cdots&{t+\rho-1}&{t+\rho}&{t+\rho+1}&\cdots&{t+2\rho-1}\\
    \hline
    1&t+\rho&t+\rho+1& \cdots&t+2\rho -1& t & t+1 & \cdots & t+\rho - 1 \\
    2&t+\rho+1&t+\rho+2&\cdots&t+\rho& t+\rho - 1 & t & \cdots & t+\rho-2 
  \end{array}
  \]
  \caption{A contiguous inverse pair that is labelled to be paired increasing.\label{f:inversepairlabelled}}
\end{figure}

Let $\SS$ be the set of all Steiner quasigroups
that are in paired standard form.  Let $L$ be a Steiner
quasigroup and let $\SS(L)$ be the set generated by the following
procedure. Let $R_{\max}=R_{\max}(L)$ be the set of all ordered pairs
$(i,j)$ such that $ \Gamma_{i,j}(L)$ is lexicographically maximum.
For each $(i,j) \in R_{\max}$,
we compute the partial permutation $\alpha$ such
that $\alpha(i) = 1$, $\alpha(j)=2$, $\alpha(i\circ j) = 3$ and
$\alpha(x)=\star$ otherwise. This creates
the singular row cycle $(132)$. We then extend the partial permutation $\alpha$
in all possible ways such that the resulting labelled quasigroup 
is in paired standard form. The next result is analogous
to \lref{lem:isoclass} and follows immediately from the definitions of
$\SS$ and $\SS(L)$.

\begin{lemma}
  If $\II(L)$ denotes the isomorphism class of the Cayley table $L$ of
  a Steiner quasigroup, then $\SS(L) = \SS\cap \II(L)$.
\end{lemma}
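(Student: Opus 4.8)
The plan is to establish the two inclusions separately, exactly as one would for \lref{lem:isoclass}, using only that isomorphisms of Steiner quasigroups preserve both the cycle structure of row pairs and the product relation $x\circ y=z$.

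For the forward inclusion $\SS(L)\subseteq\SS\cap\II(L)$, I would observe that every element of $\SS(L)$ is produced from some $(i,j)\in R_{\max}(L)$ by extending the initial partial permutation $\alpha$ (with $\alpha(i)=1$, $\alpha(j)=2$, $\alpha(i\circ j)=3$) to a full permutation for which the labelled table is a Steiner quasigroup in paired standard form. Being fully labelled, such an extension is a genuine isomorphism, so the resulting square $L_\alpha$ lies in $\II(L)$; and by the very condition enforced during the extension it lies in $\SS$. Hence $\SS(L)\subseteq\SS\cap\II(L)$.

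For the reverse inclusion $\SS\cap\II(L)\subseteq\SS(L)$, take any $M\in\SS\cap\II(L)$ and fix an isomorphism $\psi\colon L\to M$, so that $M=L_\psi$. Set $i=\psi^{-1}(1)$ and $j=\psi^{-1}(2)$. Since $M$ is in paired standard form, $\Gamma_{1,2}(M)$ is lexicographically maximum among all pairs of rows of $M$; because $\psi$ preserves the cycle structure of every row pair, $\Gamma_{i,j}(L)=\Gamma_{1,2}(M)$ is lexicographically maximum in $L$, so $(i,j)\in R_{\max}(L)$. The singular row cycle in the first two rows of $M$ has symbol set $\{1,2,3\}$, which by \lref{l:rowcycleSTS} and the remark following it forces $1\circ 2=3$ in $M$; applying $\psi^{-1}$ and using that isomorphisms respect the product gives $i\circ j=\psi^{-1}(3)$, i.e.\ $\psi(i\circ j)=3$. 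Thus $\psi$ restricted to $\{i,j,i\circ j\}$ coincides with the initial partial permutation $\alpha$ attached to $(i,j)$ in the construction of $\SS(L)$. As $M=L_\psi$ is itself in paired standard form, $\psi$ is one of the full extensions of $\alpha$ that the procedure enumerates, whence $M\in\SS(L)$.

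The argument is essentially a definitional unwinding, so I do not anticipate a genuine obstacle. The one step that warrants care is the reverse inclusion: one must verify that the pair $(i,j)$ read off from $\psi$ actually belongs to $R_{\max}(L)$ and that $\psi$'s action on the singular triple matches the prescribed initialisation, both of which rest on the structural invariance of cycle structure and of $\circ$ under isomorphism. Once these are checked, the fact that paired standard form pins down the singular cycle's symbols to $\{1,2,3\}$ and the pair $(1,2)$ to be lex-maximum guarantees that the procedure's branching over extensions recovers every member of $\SS\cap\II(L)$, completing the proof.
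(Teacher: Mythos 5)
Your proposal is correct and matches the paper's approach: the paper states that the lemma ``follows immediately from the definitions of $\SS$ and $\SS(L)$'' (in analogy with \lref{lem:isoclass}), and your two inclusions are precisely the definitional unwinding being invoked. The only points needing care --- that $(i,j)=(\psi^{-1}(1),\psi^{-1}(2))$ lands in $R_{\max}(L)$ because isomorphisms preserve cycle structures of row pairs, and that $\psi(i\circ j)=3$ because the singular cycle of $M$ has symbols $\{1,2,3\}$ --- are handled correctly.
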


In the remainder of this section we will describe a
canonical labelling algorithm by modifying the algorithm given in
\sref{s:canonicalLS}. There are two main differences between the 
algorithm from \sref{s:canonicalLS} and the one that we will define
here. The first difference is the need to generate an isomorphism $\alpha$, 
rather than an isotopism $(\alpha,\beta,\gamma)$. The
second difference is that we may exploit the symmetry properties which allow us
to use the labelling of a non-singular cycle to immediately give the labelling
of the inverse pair containing that cycle, by \lref{l:rowcycleSTS}.

\begin{algorithm}[tb]
  \caption{Label an inverse pair of row cycles in the Cayley table of a Steiner quasigroup.
  \label{a:rowcyclelabelsts}}
  \begin{algorithmic}[1]
    \Input
    \Desc{$S$}{A Steiner quasigroup.}
    \Desc{$(i,j)$}{A pair of row indices.}
    \Desc{$\alpha$}{A partial labelling of $S$. }
    \Desc{$s$}{A symbol in $S$ that is unlabelled by $\alpha$.}
    \EndInput
    \Output
    \Desc{$\alpha'$}{A partial labelling of $S$.}
    \EndOutput
\Procedure{LabelInversePair}{}
\State $\sigma\gets s$.
\State $\lambda\gets P[\ell_{i,j}(s)]$.
\State $\alpha' \gets \alpha$.
\For{$k$ from $1$ to $\ell_{i,j}(s)$}
\State $\curt\gets\curt+2$.
\State $\alpha'(\sigma)\gets \lambda + \ell_{i,j}(s)$.
\State $T_\alpha[\curt-1]\gets\sigma$.
\State Let $b$ be the column such that $L[i,b]=\sigma$.
\State $\alpha'(b)\gets \lambda$.
\State $T_\alpha[\curt]\gets b$.
\State $\sigma\gets L[j,b]$.
\State $\lambda\gets \lambda+1$.
\EndFor
\State $P[\ell_{i,j}(s)] \gets P[\ell_{i,j}(s)] + 2\ell_{i,j}(s)$.
\State \Return $\alpha'$.
\EndProcedure    
\end{algorithmic}
\end{algorithm}
Most of the procedures used in our first algorithm can be repurposed
with minimal changes. The most substantial change is to
\aref{a:rowcyclelabel}, which requires care to preserve the
total symmetry of the Steiner quasigroup.  To that end, we present
\aref{a:rowcyclelabelsts} which will label an inverse pair. The
labelling obtained from \aref{a:rowcyclelabelsts} is consistent with
the labelling shown in \fref{f:inversepairlabelled}.
\aref{a:rowcyclelabelsts} is given a symbol $s$ in a row cycle $R$,
which is part of some inverse pair,
$(R,R')$. \aref{a:rowcyclelabelsts} produces a partial labelling
$\alpha'$ such that in $L_{\alpha'}$ we have that
$R$ is the left row cycle of
$(R,R')$ and $\alpha'(s)$ is the leftmost symbol in the first row
among all the symbols of $R\cup R'$.  Note that unlike
\aref{a:rowcyclelabel}, $\alpha'(s)$ is not the smallest label among
the symbols in $R\cup R' $.  \aref{a:rowcyclelabelsts} has
$\ell_{i,j}(s)$ steps, each one of which labels
one symbol of $R$ and one symbol of $R'$. At the beginning of each step,
$\sigma$ is the next symbol of $R$ to be labelled and $\lambda$ is the
smallest label yet to be given. \aref{a:rowcyclelabelsts} then
proceeds to give the symbol $\sigma$ the label $\lambda+\ell_{i,j}(s)$
and give the label $\lambda$ to the column $b$
such that $i\circ b=\sigma$. From \lref{l:rowcycleSTS} and the fact
that $\sigma$ is initially $s$, we see that
$\sigma$ is always a symbol in $R$ and
$b$ is always a symbol in $R'$. Finally, $\sigma$ and $\lambda$ are
updated by choosing $\sigma = L[j,b]$ and increasing $\lambda$ by 1,
ensuring that successive values of $\sigma$ in $R$
are given consecutive labels.
At each step, elements $\sigma$ and $b$
such that $i\circ b =\sigma$ are given labels $\ell_{i,j}(s)$ apart,
which ensures that $L_{\alpha'}[1,t] = t+\ell_{i,j}(s)$
for $P[\ell_{i,j}(s)]\le t<P[\ell_{i,j}(s)]+\ell_{i,j}(s)$,
since $\lambda$ is initially given the value
$P[\ell_{i,j}(s)]$.  The choice of $\sigma$ at each step ensures that
$L_{\alpha'}[2,t]=L_{\alpha'}[1,t+1]$ for 
$P[\ell_{i,j}(s)]\le t\le P[\ell_{i,j}(s)]+ \ell_{i,j}(s)-2$. Thus
the row cycle $R$ in $L_{\alpha'}$ is completely determined and is
increasing. It follows from total symmetry that $R'$ is determined
uniquely and $(R,R')$ is paired increasing.

Only minor changes to \aref{a:extend} are required.
First, replace each isotopism with an
isomorphism, i.e., set $\alpha=\beta=\gamma$ and
$\alpha'=\beta'=\gamma'$.
Note that \Lref{line:labelrowcycle} of \aref{a:extend}  now requires a
call to \aref{a:rowcyclelabelsts} and should be replaced with
$\alpha'\gets\textsc{LabelInversePair}(L, (i,j), \alpha', s)$.
We finish with the changes to \aref{a:extend} by noting that for a
partial labelling $\alpha$, it is unnecessary to check every
unexplored cell of $L_\alpha$ to ensure it contains no $\star$.
By the total symmetry of the Steiner quasigroup, we need only
check unexplored cells in, say, the upper triangular section of
$L_{\alpha}$ and so the use of $\Succ$ in \Lref{line:succ} of
\aref{a:extend} is inefficient. For this reason, we define a new successor
function that can be used instead of the successor function
defined earlier.
\[
\Succ{}'(x,y) =
\begin{cases}
  (1, y+1), & \text{if } x = y, \\
  (x+1,y), & \text{if } x < y.
\end{cases}
\]

Analogously to the changes made to \aref{a:extend}, each isotopism in
Procedures~\ref{a:branch} and \ref{a:canonical} is replaced with an
appropriate isomorphism and the call to \aref{a:rowcyclelabel} on
\Lref{line:rowoutput} of \aref{a:branch} is replaced with
$\alpha'\gets \textsc{LabelInversePair}(L, (i,j), \alpha, s)$.

Up to this point we have only been concerned with the labelling of
non-singular row cycles. To complete the labelling of the Steiner
quasigroup we need to consider the singular row cycle. Consider
some $(i,j) \in R_{\text{max}}(L)$ and let $i \circ j = k$.
It follows that $(ikj)$ is the $3$ cycle containing the idempotent
entries in rows $i$ and $j$. We define the partial labelling
$\alpha(i)=1$, $\alpha(j)=2$ and $\alpha(k)=3$. To account for
this change, the array $P$ requires modification. However, this
modification is trivial, as we calculate the values of $P$ by ignoring
the singular cycle and incrementing the value $P[k]$ by $3$,
for all $k \in [n]$. After labelling the singular cycle we continue by
updating $P$, $T_\alpha$ and $\curt$ appropriately. This must be done
for each new $(i,j)$ selected in \aref{a:canonical} before the first
branch step on \Lref{line:firstbranch}.

Having modified \aref{a:canonical} to use \aref{a:rowcyclelabelsts} to
preserve total symmetry, we have the follow analogue of
\tref{t:isoLsamecan} for Steiner quasigroups.

\begin{theorem}\label{t:isoSsamecan}
  Let $L_1$ and $L_2$ be Cayley tables of two isomorphic Steiner
  quasigroups and let $L_1'$ and $L_2'$ be the Latin squares obtained
  by running the modified \aref{a:canonical} on $L_1$ and $L_2$,
  respectively. Then $L_1'=L_2'$.
\end{theorem}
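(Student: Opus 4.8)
The plan is to mirror the proof of \tref{t:isoLsamecan} exactly, making only the adjustments forced by the two differences between the original algorithm and its Steiner-quasigroup variant: we now track a single permutation $\alpha$ (rather than a triple $(\alpha,\beta,\gamma)$) and each call to \aref{a:rowcyclelabelsts} labels an entire inverse pair rather than a single row cycle. So let $\phi$ be an isomorphism from $L_1$ to $L_2$, pick $(i_1,j_1)\in R_{\max}(L_1)$ and set $(i_2,j_2)=(\phi(i_1),\phi(j_1))$, which lies in $R_{\max}(L_2)$ since cycle structure is an isomorphism invariant. As before I would consider the two search trees $T_x$ of $\textsc{Branch}(L_x,(i_x,j_x),\star)$ and show that their leaves are in correspondence via $\phi$: for each leaf $\alpha_1$ of $T_1$ there is a leaf $\alpha_2$ of $T_2$ with $\alpha_1=\alpha_2\phi$. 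Since \aref{a:canonical} returns the lexicographically least labelled square over all such leaves and all pairs in $R_{\max}$, this correspondence forces $L_1'=L_2'$.

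The heart of the argument is again an induction on depth $k$ in the search tree, proving that for every node $\alpha_1\in A_k$ there is $\alpha_2\in B_k$ with $\alpha_1=\alpha_2\phi$, with base case $A_0=\{\star\}=B_0$. For the inductive step I would take a non-root $\alpha_1\in A_k$ with parent $\alpha_1'\in A_{k-1}$, use the hypothesis to obtain $\alpha_2'\in B_{k-1}$ with $\alpha_1'=\alpha_2'\phi$, let $s_1$ be the symbol whose execution of Lines~\ref{line:rowoutput}--\ref{line:extendoutput} of $\textsc{Branch}$ produces $\alpha_1$, and set $s_2=\phi(s_1)$. Because $\phi$ is an isomorphism, $s_1$ and $s_2$ lie in row cycles of equal length, so they are chosen at the same branching step and the global list $P$ (which depends only on cycle structure and on the lengths already labelled) agrees for the two runs. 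The key technical claim, replacing the \textsc{LabelRowCycle} compatibility used in \tref{t:isoLsamecan}, is that $\textsc{LabelInversePair}$ is $\phi$-equivariant: if $\alpha_1^\ast=\alpha_2^\ast\phi$ and $s_2'=\phi(s_1')$ with $s_1'$ in a row cycle of length $\ell$, then the outputs $\alpha_x^\dagger$ satisfy $\alpha_1^\dagger=\alpha_2^\dagger\phi$. This follows because \aref{a:rowcyclelabelsts} assigns labels deterministically from $s$, $P[\ell_{i,j}(s)]$ and the local structure of $r_i\cup r_j$, all of which are preserved by $\phi$; by \lref{l:rowcycleSTS} the inverse cycle $R'$ is determined by $R$, so labelling $R'$ introduces no new choices and preserves the correspondence. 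I would also note that the singular row cycle is handled identically in both runs, since $\phi(i_1\circ j_1)=i_2\circ j_2$ and the partial labelling $\alpha(i)=1,\alpha(j)=2,\alpha(i\circ j)=3$ is forced, so it contributes nothing that could break the equivariance.

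Finally, as in the original proof, since $\textsc{Branch}$ augments the partial labelling by a sequence of applications of \aref{a:rowcyclelabelsts}, the whole extension step is $\phi$-equivariant provided the sequence of symbols chosen corresponds under $\phi$. The initial choices $s_1,s_2$ correspond by construction, and since $L_{\alpha_1^\ast}=L_{\alpha_2^\ast}$ as labelled arrays (using $\alpha_1^\ast=\alpha_2^\ast\phi$), the $\star$ in the $\Succ'$-least unexamined cell occurs in the same position in both, with preimages related by $\phi$; hence each subsequent symbol chosen in \aref{a:extend} corresponds as well. The main obstacle, and the only place needing genuine care beyond transcribing the earlier proof, is verifying the equivariance of \aref{a:rowcyclelabelsts}: one must check that the two-to-one relationship between row cycles and cycles in the cycle graph (\lref{l:rowcycleSTS}) does not create a spurious choice of which cycle of an inverse pair is labelled ``left.'' Because the procedure always designates as the left cycle the one containing the input symbol $s$, and $s_2=\phi(s_1)$, this designation is itself $\phi$-equivariant, so no ambiguity arises. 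With this verified, $\alpha_1=\alpha_2\phi$ holds for the constructed $\alpha_2$, completing the induction and hence the theorem.
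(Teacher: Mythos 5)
Your proposal is correct and follows exactly the route the paper intends: the paper states \tref{t:isoSsamecan} without proof as an immediate analogue of \tref{t:isoLsamecan}, and your argument is precisely the transcription of that proof with the necessary adjustments (single permutation $\alpha$, $\phi$-equivariance of \textsc{LabelInversePair} including the left/right designation via the input symbol, the forced labelling of the singular cycle, and $\Succ'$ in place of $\Succ$). The points you single out as needing care are the right ones, and your verification of each is sound.
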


Finally, we observe that the changes made to the algorithm in
\sref{s:canonicalLS} do not change the worst case time
complexity of any of the procedures.  Thus we have:

\begin{theorem}\label{t:stscanonical}
  Let $L$ be a Steiner quasigroup and let $\ell(L)$ be the length of the
  longest row cycle in $L$. The modified \aref{a:canonical} determines the
  canonical labelling of $L$ in $O(n^{\log_2{n}-\log_2{\ell(L)}+ 3})$ time.
\end{theorem}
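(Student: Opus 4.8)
The plan is to argue that the modified version of \aref{a:canonical} for Steiner quasigroups has essentially the same recursive branching structure as the original \aref{a:canonical}, and therefore inherits the worst-case analysis carried out in \tref{t:timetoextendpartiallabelling}. The key observation is that the modifications described in this subsection---replacing isotopisms by isomorphisms, calling \aref{a:rowcyclelabelsts} in place of \aref{a:rowcyclelabel}, using $\Succ'$ in place of $\Succ$, and pre-labelling the singular row cycle---do not alter the asymptotic running time of any individual procedure, nor the shape of the implicit search tree. First I would re-examine the three supporting lemmas (\lref{lem:polyextend}, \lref{lem:symdouble}) and confirm that their proofs go through verbatim, or with only cosmetic changes, in the Steiner setting.

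The main structural claim is the analogue of \lref{lem:symdouble}: that each branching step at least doubles the number of labelled elements. In the Steiner case, \aref{a:rowcyclelabelsts} labels an \emph{entire inverse pair} $(R,R')$ when given a symbol in the longest unlabelled row cycle, so a single call increments $\curt$ by $2\ell_{i,j}(s)$ rather than $\ell_{i,j}(s)$. The argument is that if $L_\alpha$ is a labelled Latin subsquare (corresponding to a Steiner subsystem) of order $k$, then the inverse pair lying outside it forces, via the extension phase of \aref{a:extend}, at least $k$ further labelled symbols, exactly as in the proof of \lref{lem:symdouble}. Hence the child of any node at depth $k\ge1$ again has at least $2^{k-1}\ell(L)$ labelled symbols, so the search tree has depth at most $\log_2(n/\ell(L))+1$, with each node having at most $n$ children (one per choice of longest-cycle symbol in $S$).

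For the per-leaf cost I would verify the analogue of \lref{lem:polyextend}: that the total time spent labelling and extending per isotopism (now isomorphism) is $O(n^2)$. Each cell of $L_\alpha$ is inspected at most once; the switch to $\Succ'$ restricts the search to (roughly) the upper triangle, which only decreases the work; and each call to \aref{a:rowcyclelabelsts} runs in $O(n)$ time, labelling one inverse pair per $\star$ encountered. The pre-labelling of the singular cycle and the re-initialisation of $P$ (incrementing each $P[k]$ by $3$) add only $O(n)$ overhead per pair $(i,j)\in R_{\max}$, which is absorbed. Combining a depth-$d$ tree with $d\le\log_2(n/\ell(L))+1$, at most $n$ children per node, and $O(n^2)$ per leaf yields $O(n^{d+2})=O(n^{\log_2 n-\log_2\ell(L)+3})$, matching the bound.

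The main obstacle I anticipate is not the arithmetic but ensuring that the branching multiplier in the Steiner analogue of \lref{lem:symdouble} is genuinely preserved: one must check that a subsquare forcing a branch really does correspond to a Steiner subsystem (as noted in the text, this is exactly the only kind of subsquare that causes branching here), and that \aref{a:rowcyclelabelsts} labels inverse pairs in such a way that the doubling bound holds with $\ell(L)$ being the longest \emph{row cycle} length rather than the longest inverse-pair length. Since non-singular row cycles have length at most $(n-3)/2$, and a branching step labels a whole inverse pair of combined size $2\ell_{i,j}(s)$, the doubling estimate is if anything stronger than in the Latin square case, so the same $d\le\log_2(n/\ell(L))+1$ bound on the depth holds. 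Once this is confirmed, the conclusion follows exactly as in \tref{t:timetoextendpartiallabelling}.
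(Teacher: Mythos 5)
Your proposal is correct and follows the same route as the paper, which simply observes that the modifications (labelling inverse pairs, using isomorphisms, the $\Succ'$ traversal, and pre-labelling the singular cycle) leave the per-procedure costs and the branching structure of Theorem~\ref{t:timetoextendpartiallabelling} intact. Your write-up merely makes explicit the verification of the analogues of Lemmas~\ref{lem:symdouble} and~\ref{lem:polyextend} that the paper leaves implicit, including the (correct) remark that labelling a whole inverse pair only strengthens the doubling bound.
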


It is harder to establish the average time complexity of canonical
labelling for Steiner quasigroups. In particular, an analogue of
\tref{t:polycanonical} for Steiner quasigroups, which would require
analogues of Theorems~\ref{t:largeSS} and~\ref{t:longcyc}, appears
elusive.  However, we certainly believe it to be true:

\begin{conjecture}\label{con:steiner}
  Let $S$ be a Steiner quasigroup of order $n$ selected uniformly at
  random.  With probability $1-o(1)$, the length $\ell(S)$ of the
  longest row cycle in $S$ exceeds the size of the largest
  proper Steiner subsystem of $S$.
\end{conjecture}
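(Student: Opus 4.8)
The plan is to mimic, for Steiner quasigroups, the two‑sided strategy used for Latin squares in \sref{s:Latin}: prove separately that (i) with probability $1-o(1)$ every proper Steiner subsystem of $S$ has order at most $n^{1/2}\log^{c}n$ for some constant $c$, and (ii) with probability $1-o(1)$ some pair of rows of $S$ carries a row cycle of length at least $n^{1/2}\log^{c'}n$ with $c'>c$. A union bound over the two $o(1)$ failure events then gives $\ell(S)$ exceeding the order of the largest proper subsystem, as required. Since a proper subsystem of order $m$ forces $n\ge 2m+1$, the subsystems we must rule out have order in the range $[\,n^{1/2}\log^{c}n,\ (n-1)/2\,]$.

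For part (i), I would follow the proof of \tref{t:largeSS} almost verbatim, replacing its Latin‑square inputs with their Steiner analogues. Writing $N(n)$ for the number of Steiner triple systems of order $n$, the role of \tref{t:uppbndLS} is played by the asymptotic enumeration $\log N(n)=\tfrac16 n^{2}\log n-\tfrac13 n^{2}+o(n^{2})$ due to Keevash and to Linial and Luria. Fixing an order $m$ with $2m+1\le n$ and taking a union bound over the $\binom nm=\exp(O(n))$ choices of point set, it suffices to bound the probability that a specified $m$‑set supports a subsystem. The number of systems forcing that subsystem is at most $N(m)$ times the number of completions of the induced partial triple system on the remaining pairs; dividing by $N(n)$ and carrying out the resulting entropy computation—an analogue of the function $\theta(\alpha)$ appearing in the proof of \tref{t:largeSS}—should show that the forcing cost dominates the $\exp(O(n))$ point‑set factor once $m$ is of order $\Theta(n)$.

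For part (ii), I would pass to the cycle graph $G_{a,b}$, which is $2$‑regular on $n-3$ vertices, and use the correspondence, established in \lref{l:rowcycleSTS}, whereby each cycle of $G_{a,b}$ arises from an inverse pair of non‑singular row cycles. The strategy is the one behind \tref{t:longcyc}: bound the number of cycles of $G_{a,b}$ by $O(n^{1/2})$ with overwhelming probability, so that, since these cycles partition $n-O(1)$ vertices, pigeonhole yields a $G_{a,b}$‑cycle—and hence a row cycle—of length $\Omega(n^{1/2})$; a Hardy--Ramanujan union bound over cycle‑length profiles, exactly as in \tref{t:longcyc}, then sharpens this to $n^{1/2}\log^{1-\eps}n$. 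The required input is a bound, for a uniformly random Steiner quasigroup, on the probability that $G_{a,b}$ has a prescribed cycle‑length profile $c_1,\dots,c_\kappa$, playing the role that Corollary~4.5 of \cite{CGW08} plays for Latin squares.

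The hard part is supplying these two enumeration‑theoretic inputs, and I expect part (ii) to be the main obstacle—consistent with our remark that an analogue of \tref{t:longcyc} appears elusive. For part (i), the entropy argument handles subsystems of order $\Theta(n)$ given enumeration accurate to $o(n^{2})$ in the exponent, but pushing the threshold down to $n^{1/2}\log^{c}n$ requires both markedly sharper enumeration of Steiner triple systems and a Steiner analogue of the Godsil--McKay estimate \tref{t:randLR} for small subsystems, and no such local statistic for partial triple systems is currently known. For part (ii), the cycle‑profile probability bound rests on switching and enumeration technology for Steiner triple systems that is far less developed than the Latin‑rectangle machinery of \cite{CGW08}; establishing even the weaker statement that $G_{a,b}$ typically has $O(n^{1/2})$ cycles would already suffice to beat the subsystem threshold, and securing that bound is, in my view, where the real difficulty lies.
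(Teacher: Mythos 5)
This statement is Conjecture~\ref{con:steiner}: the paper does not prove it, and explicitly says that the required analogues of Theorems~\ref{t:largeSS} and~\ref{t:longcyc} for Steiner triple systems ``appear elusive.'' Your proposal is therefore not a proof to be checked against the paper's argument --- there is none --- but a research programme, and to your credit you say as much. The outline you give (bound the largest proper subsystem from above, bound the longest row cycle from below, union-bound the two failure events) is exactly the strategy the authors use for Latin squares and the one they presumably have in mind here, so as a plan it is sound. But neither half is actually established by what you write, and you should not present this as progress toward a proof of the conjecture.

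Concretely, the gaps are these. For part (i), the Latin-square proof of \tref{t:largeSS} leans on three quantitative inputs that have no known Steiner analogues: an upper bound on the number of squares with error only $O(n\log^2 n)$ in the exponent (\tref{t:uppbndLS}), the fact that all $k\times n$ rectangles have comparable numbers of completions (\tref{t:extensions}), which is what makes the union bound over row-sets legitimate, and the Godsil--McKay local estimate (\tref{t:randLR}) for the small-subsquare regime. The Keevash/Linial--Luria enumeration you cite has error $o(n^2)$ in the exponent, which is too coarse to beat the $\exp(O(n))$ union-bound factor except in the $m=\Theta(n)$ regime, and even there you have no control on the number of completions of a partial triple system containing a prescribed subsystem --- ``should show'' is doing all the work. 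For part (ii), everything rests on a cycle-length-profile probability bound for $G_{a,b}$ playing the role of Corollary~4.5 of \cite{CGW08}; no such bound exists, and without it neither the $O(n^{1/2})$ bound on the number of cycles nor the Hardy--Ramanujan refinement can be run. So the conjecture remains open after your proposal, and the honest conclusion is that you have identified, rather than closed, the two missing ingredients.
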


Quackenbush \cite{Qua80} conjectures that random Steiner triple
systems have a low probability of having any proper Steiner
subsystem. However, Kwan \cite{Kwa20} suggests that this conjecture
fails with regard to Steiner subsystems of order $7$. However, we
believe that it will be true for subsystems of size larger than some
constant. If so then \cref{con:steiner} should be true with room to spare.
If it is, then our algorithm has polynomial average-time complexity.

\begin{theorem}
  If \cref{con:steiner} is true, then canonical labelling (and hence
  also isomorphism testing) of Steiner triple systems can be achieved
  in average-case $O(n^5)$ time.
\end{theorem}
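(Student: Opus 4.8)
The plan is to mirror the proof of \tref{t:polycanonical}, with \cref{con:steiner} playing the combined role that Theorems~\ref{t:largeSS} and~\ref{t:longcyc} played there. First I would treat the good case. Suppose $S$ is a Steiner quasigroup of order $n$ whose longest row cycle has length $\ell(S)$ strictly greater than the order $m$ of its largest proper Steiner subsystem, and fix any $(i,j)\in R_{\max}$. The modified \aref{a:canonical} first labels the singular $3$-cycle (fixing the block on $\{1,2,3\}$) and then, in its opening call to the modified \textsc{Branch}, labels a longest non-singular row cycle $R$ via \aref{a:rowcyclelabelsts}. By \lref{l:rowcycleSTS} this simultaneously fixes the inverse partner $R'$, and the columns of $R$ alone already comprise $\ell(S)$ distinct points. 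Any Steiner subsystem containing $R$ must therefore contain these $\ell(S)$ points, so has order at least $\ell(S)>m$ and cannot be proper; it must be all of $S$. Hence the ensuing call to \textsc{Extend} completes a labelling of the entire square without ever reaching \Lref{line:newbranch}, exactly as in \tref{t:polycanonical}.

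Next I would bound the cost in this good case. There are at most $\binom{n}{2}$ ordered pairs in $R_{\max}$, and for each of them at most $n$ symbols lie in a longest row cycle. The Steiner analogue of \lref{lem:polyextend} still yields $O(n^2)$ time per isomorphism produced, since the cell-checking and the per-cycle labelling performed by \aref{a:rowcyclelabelsts} cost no more than before, while replacing $\Succ$ by $\Succ{}'$ only decreases the work. Multiplying these gives $O(n^2\cdot n\cdot n^2)=O(n^5)$, and by \cref{con:steiner} this good case occurs with probability $1-o(1)$.

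The remaining, and principal, difficulty is the bad case $\ell(S)\le m$, in which the implicit search tree may genuinely branch. For such instances I would fall back on the worst-case estimate \tref{t:stscanonical}, namely $O(n^{\log_2 n-\log_2\ell(S)+3})$, which is at worst quasipolynomial. The delicate point, exactly as in the closing display of \tref{t:polycanonical}, is that the bad case must contribute only $o(1)$ to the expectation; and here the bare assertion of probability $1-o(1)$ in \cref{con:steiner} is \emph{not} by itself sufficient, since multiplying an event probability that is only known to be $o(1)$ by a quasipolynomial worst-case bound need not yield a polynomial quantity. What one genuinely needs is that the probability of the event $\ell(S)\le m$ decays faster than $n^{-\log_2 n}$, so that its contribution to the average is
\[
\exp\big(-\omega(\log^2 n)\big)\cdot n^{\log_2 n+3}=o(1).
\]
Establishing a superpolynomial tail bound of this strength is where I expect the real work to lie; it is precisely the quantitative sharpening the authors intend when they remark that \cref{con:steiner} should hold ``with room to spare'', and it would follow, for example, from Steiner analogues of Theorems~\ref{t:largeSS} and~\ref{t:longcyc}. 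Granting such a tail, the expectation is dominated by the good-case bound and equals $O(n^5)$.
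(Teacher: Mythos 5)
Your proposal follows exactly the route the paper intends: the paper states this theorem with no accompanying proof, presenting it as an immediate consequence of \cref{con:steiner} via the argument of \tref{t:polycanonical}, and your good-case analysis is the correct instantiation of that argument. In particular, your observation that a longest non-singular row cycle, occupying $\ell(S)$ columns (indeed $2\ell(S)+2$ points once the two rows and the symbols are counted, by \lref{l:rowcycleSTS}), cannot lie inside a proper Steiner subsystem once $\ell(S)$ exceeds the order of the largest such subsystem --- so that the first call to \textsc{Extend} must complete the labelling without ever reaching \Lref{line:newbranch} --- is precisely what is needed, and the $O(n^2)\cdot n\cdot O(n^2)=O(n^5)$ count then goes through as in \tref{t:polycanonical}.

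The quantitative worry you raise about the bad case is genuine, and it is a gap in the paper's statement rather than in your reasoning. In \tref{t:polycanonical} the bad event has probability $o(\exp(-\frac14 n^{1/2}))$ by \tref{t:longcyc}, which comfortably absorbs the worst-case factor $n^{\log_2 n+3}=\exp(O(\log^2 n))$; but \cref{con:steiner} as literally stated only makes the bad event have probability $o(1)$, and $o(1)\cdot n^{\log_2 n+3}$ need not be bounded, so the expectation is not controlled. The authors appear aware of this informally (the remark that the conjecture ``should be true with room to spare''), but a formal proof of the theorem as stated does require a tail bound of the strength you describe, e.g.\ probability $\exp(-\omega(\log^2 n))$ for the event that the longest row cycle fails to exceed the largest proper subsystem, exactly as would follow from Steiner analogues of Theorems~\ref{t:largeSS} and~\ref{t:longcyc}. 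In short: same approach as the paper, correct where the paper's implicit argument is correct, and you have accurately located the one place where the conditional claim, read literally, does not close.
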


\section{One-factorisations}\label{s:fact}

In this section we describe how the algorithm defined in
\sref{s:canonicalLS} can be adapted for $1$-factorisations of complete
graphs.
A $1$-factorisation of a graph is a partition of the edge set of that
graph into spanning $1$-regular subgraphs called $1$-factors. A
\emph{sub-$1$-factorisation} of order $k$ of a $1$-factorisation
$\{F_1,\ldots,F_{2n-1}\}$ of the complete graph $K_{2n}$ is a
$1$-factorisation $\{F_1',\ldots,F_{k-1}'\}$ of $K_{k}$ such that for
all $i=1,\ldots, k-1$, we have $F_i'\subseteq F_j$ for some $j$. Let
$F$ and $G$ be two $1$-factorisations of $K_{2n}$. Then $F$ and $G$
are isomorphic if there is a mapping
$\phi:V(K_{2n})\rightarrow V(K_{2n})$ such that $f \in F$ if and only
if $\phi(f) \in G$.

A quasigroup $(Q,\circ)$ is \emph{unipotent} if there is a element $y$
such that $x\circ x = y$ for all $x\in Q$. One can encode a
$1$-factorisation of $K_{2n}$ either as a unipotent commutative 
quasigroup of order $2n$ or an idempotent commutative quasigroup of
order $2n-1$.
In the former case, this is done for a $1$-factorisation
$\{F_1,\ldots,F_{2n-1}\}$ of $K_{2n}$ by defining multiplication
$\circ$ on the set $V(K_{2n})$ as $i\circ i = 1$ for all
$i\in V(K_{2n})$ and $x\circ y=k+1$ for every edge $\{x,y\}$ that
is in $F_k$.
See \cite{WI05} for further details.  It is also true that
$1$-factorisations of the complete bipartite graph are equivalent to
general Latin squares. Thus, isomorphism testing of $1$-factorisations of 
complete bipartite graphs can be performed in average-case polynomial
time, by \tref{t:isotopytimeLS}.

Before presenting an analogue to the algorithm
in \sref{s:canonicalLS} we briefly outline the history of results on
$1$-factorisations relevant to us.
A \emph{perfect} $1$-factorisation is a $1$-factorisation
in which each pair of $1$-factors forms a Hamiltonian
cycle. A famous conjecture by Kotzig asserts that there exists a
perfect $1$-factorisation of $K_{n}$ for all even $n$. Only three
infinite families of perfect $1$-factorisation of $K_{n}$ are known
\cite{BMW06} covering every $n$ of the form $n=2p$ or $n=p+1$ where
$p$ is an odd prime. Enumerations have been completed up to $K_{16}$
and constructions have been found for other sporadic orders;
see \cite{GW20} for the most recent list.

In \cite{DDS05} a weakened form of perfect $1$-factorisations is
studied, called sequentially perfect $1$-factorisations, for which the
factors of the $1$-factorisation are ordered and only (cyclically)
consecutive pairs of $1$-factors need to form a Hamiltonian cycle. It
is shown in~\cite{DDS05} that a sequentially perfect $1$-factorisation
of $K_{n}$ exists for all even $n$.

For a proper $k$-edge-colouring $\gamma$ of a graph $G$ with chromatic
index $k$, let $L_\gamma(k,G)$ be the maximum length of a bicoloured
cycle. Let $L(k,G)=\min_{\gamma}L_\gamma(k,G)$, where the minimum is
taken over proper $k$-edge-colourings of $G$. Dukes and Ling \cite{DL09}
gave upper bounds on $L(k,G)$ for two graphs of particular interest to
us, namely the complete bipartite graph and the complete graph. Benson
and Dukes \cite{BD16} subsequently improved the bound for $K_{n,n}$,
leading to the following:

\begin{theorem}\label{t:nolargecycles}
  For all $n$, $L(n,K_{n,n}) \leq 182$  and
  for all odd $n$, $L(n,K_{n+1}) \leq 1720$.
\end{theorem}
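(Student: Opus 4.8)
The plan is to prove both bounds by explicit construction, first translating the edge-colouring problem into the Latin-square and $1$-factorisation language developed in \sref{s:fact}. A proper $n$-edge-colouring of $K_{n,n}$ is exactly a Latin square $L$ of order $n$: the two parts index the rows and columns, and the colour of the edge $\{r,c\}$ is the symbol $L[r,c]$. For two colours $a$ and $b$, the edges of those colours form a $2$-regular bipartite subgraph whose cycles are the bicoloured $(a,b)$-cycles, and these are precisely the cycles of the permutation $\pi_a^{-1}\pi_b$ of rows, where $\pi_s$ sends each row to the column holding symbol $s$. Thus $L(n,K_{n,n})\le c$ is equivalent to the existence of a Latin square of order $n$ in which, for every pair of symbols, this product permutation has all cycles of length at most $c/2$. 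Likewise, by the encoding of \sref{s:fact}, a proper $n$-edge-colouring of $K_{n+1}$ with $n+1$ even is a $1$-factorisation of $K_{n+1}$, and the bicoloured cycles are the unions of pairs of $1$-factors; bounding $L(n,K_{n+1})$ amounts to constructing a $1$-factorisation in which every pair of factors meets in short cycles. The whole difficulty is that the target is a constant: the construction must work uniformly for all $n$.

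The construction strategy I would pursue is a recursive, block-structured one built from a bounded family of constant-size gadgets. Writing the index set in a suitable mixed-radix or factored form, I would assemble $L$ (respectively the $1$-factorisation) from copies of small base colourings whose bicoloured cycles are individually short, arranged so that the colour classes are confined to bounded regions. The essential design goal is a confinement property: any bicoloured cycle, once it enters a block, is forced either to close up within a bounded number of blocks or to be routed back, so that its length cannot grow with $n$. The explicit constants $182$ and $1720$ would then emerge from optimising the base gadgets and from the overhead incurred when gadgets are glued together, with the commutativity and symmetry constraints needed for $K_{n+1}$ plausibly accounting for the larger value in the second bound.

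Because a single uniform construction rarely covers every residue class, I would next reduce the set of orders that must be treated directly. For most $n$ the recursive construction applies after factoring out a small part of $n$; the remaining orders fall into finitely many residue classes or small values, and these I would dispatch by a twisted product with a fixed ingredient or by explicit, computer-assisted construction. The bound for $K_{n+1}$ is stated only for odd $n$ precisely because the parity of $n+1$ governs which ingredients are available, so that the gluing step closes cleanly in the even order $n+1$.

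I expect the main obstacle to be exactly the confinement step: controlling how a bicoloured cycle can chain across the assembled structure. Naive amalgamations, such as the Kronecker product of two Latin squares, cause bicoloured cycle lengths to combine multiplicatively through least common multiples, so cycles grow without bound; avoiding this requires a twisted or frame-based composition together with a careful invariant showing that each bicoloured cycle is trapped within $O(1)$ gadgets. Establishing that invariant, and then tightening the resulting constant to the stated values through the case analysis, is where the real work lies.
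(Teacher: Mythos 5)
The paper does not prove this statement at all: it is quoted from Dukes and Ling \cite{DL09} and Benson and Dukes \cite{BD16}, so there is no internal proof to compare against. Judged on its own terms, your proposal is a research outline rather than a proof. The translation step is fine (a proper $n$-edge-colouring of $K_{n,n}$ is a Latin square, bicoloured cycles correspond to cycles of $\perm{a}{b}$-type permutations up to a factor of $2$ and up to conjugacy, and the $K_{n+1}$ case is the $1$-factorisation encoding of \sref{s:fact}). But everything after that is a statement of intent: you name a ``confinement property'' as the design goal, observe correctly that naive Kronecker products fail because cycle lengths combine through least common multiples, and then say that establishing the invariant ``is where the real work lies.'' That is precisely the content of the theorem, and it is not supplied. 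In particular the constants $182$ and $1720$ cannot ``emerge from optimising the base gadgets'' without an actual construction in hand; they are outputs of specific finite existence results, not tunable parameters of a generic gluing scheme.

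The missing idea, concretely, is the pairwise-balanced-design (linear space) machinery that \cite{DL09} and \cite{BD16} actually use. One builds a linear space on the $n$ points in which every pair of points lies in a proper flat of bounded size, arranges the Latin square (or $1$-factorisation) so that the subarray generated by any two colours is contained in a subsquare indexed by such a flat, and then the confinement is automatic: a bicoloured cycle cannot leave the bounded subsquare that contains the pair generating it. The set of orders admitting such structures is PBD-closed, so a finite list of base cases (this is where computer-assisted or explicit small constructions enter, and where the constants come from) covers all $n$. Your sketch gestures at ``bounded regions'' and ``twisted or frame-based composition'' but never identifies the flat-closure mechanism that makes the cycles provably trapped, so the central step of the argument is absent.
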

As $L(n,K_{n,n})$ is the minimum length of the longest row cycle among
Latin squares of order $n$, \tref{t:nolargecycles} shows that row
cycles longer than a fixed length can be avoided in Latin squares of
any order.

The Cayley tables of elementary abelian 2-groups provide a family of
Latin squares of unbounded order in which no row cycle has length more
than 2. More generally, by \tref{t:nolargecycles} it is known that the
minimum value of $\ell(L)$ is uniformly bounded for all orders $n$.
Thus the best bound that \tref{t:timetoextendpartiallabelling} can provide
on the worst case time complexity is $n^{O(1)+\log_2n}$.

In \cite{Mes09} it is shown that for all even $n$ and even $k<n$ there
exists a $1$-factorisation of $K_n$ avoiding a $k$-cycle (in the sense
that there is no pair of $1$-factors whose union contains a $k$-cycle).
Partial results for $1$-factorisation of $K_n$ avoiding $n$-cycles
(i.e., Hamiltonian cycles) are also obtained. It is also shown in
\cite{Mes09} that there exists a $1$-factorisation of $K_n$ avoiding
cycles of lengths 4 and 6 so long as $n\geq 10$ and $n\equiv2\bmod4$.

\subsection{Canonical $1$-factorisations}

In \sref{s:STS} we extended the algorithm defined in
\sref{s:canonicalLS} in two different ways to obtain canonical labelling
algorithms for Steiner quasigroups (or equivalently, for
Steiner triple systems) that preserve the structure of
the Steiner quasigroup. In this subsection we will perform the
analogous task for $1$-factorisations of the complete graph $K_{2n}$.

If we use the idempotent symmetric encoding of a $1$-factorisation
then we have an analogue of \tref{t:lift}.

\begin{theorem}\label{t:lift1F}
  Let $L$ be an idempotent symmetric Latin square and let
  $(\alpha,\beta,\gamma)$ be the canonical labelling of $L$ obtained
  by \aref{a:canonical}. Then $L_\gamma$ serves as a canonical
  representative of the rrs-isotopism class of $L$.
\end{theorem}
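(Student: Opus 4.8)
The plan is to mirror the proof of \tref{t:lift} almost verbatim, replacing the pair ``isotopic Steiner quasigroups / isomorphism'' with ``rrs-isotopic idempotent symmetric Latin squares / rrs-isotopism''. First I would take two idempotent symmetric Latin squares $L$ and $L'$ that are rrs-isotopic, and let $(\alpha,\beta,\gamma)$ and $(\alpha',\beta',\gamma')$ be the canonical labellings obtained by \aref{a:canonical}. Since rrs-isotopism is a special case of isotopism, $L$ and $L'$ are in particular isotopic, so \tref{t:isoLsamecan} applies and gives $L_{\alpha\beta\gamma}=L'_{\alpha'\beta'\gamma'}$. Rearranging, this says that $L$ and $L'$ differ by the composite isotopism $\big((\alpha')^{-1}\alpha,(\beta')^{-1}\beta,(\gamma')^{-1}\gamma\big)$; writing it the other way, $L=L'_{(\alpha'\alpha^{-1})(\beta'\beta^{-1})(\gamma'\gamma^{-1})}$, exactly as in the proof of \tref{t:lift}.

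The crux is then to invoke the observation from \cite{WI05} — the same lemma (\cite[Lem.\,6]{WI05}) used in \tref{t:lift} — which controls isotopisms between idempotent symmetric squares. In the Steiner (totally symmetric) case that lemma collapsed an arbitrary isotopism between two idempotent symmetric squares to an isomorphism, so that $L=L'_{(\gamma'\gamma^{-1})}$ and hence $L_\gamma=L_{\gamma'}$. Here the target class is coarser: we want to show that the canonical representative $L_\gamma$ depends only on the \emph{rrs-isotopism} class. So the step I would carry out is to verify that the cited lemma applied to our idempotent symmetric setting yields that the isotopism relating $L$ and $L'$ reduces to an rrs-isotopism, i.e.\ one in which the row and column permutations coincide. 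That is the natural analogue: total symmetry forced $\alpha=\beta=\gamma$, whereas plain symmetry (commutativity) should force only $\alpha=\beta$, since reflecting across the diagonal swaps the roles of rows and columns while fixing symbols. From this I would conclude $L_\gamma=L_{\gamma'}$ and that $L_\gamma$ is therefore a well-defined representative of the rrs-isotopism class, and that it is itself idempotent and symmetric because applying $\gamma$ to symbols alone (together with the forced common row/column relabelling) preserves those properties.

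The main obstacle I anticipate is getting the precise statement of \cite[Lem.\,6]{WI05} to deliver the rrs-conclusion rather than the full isomorphism conclusion. In \tref{t:lift} the quasigroups were \emph{totally} symmetric, which is a strictly stronger hypothesis than the idempotent \emph{symmetric} hypothesis here; the lemma may have been stated for the totally symmetric case, so I would need to either cite the correct variant for merely commutative idempotent squares or reprove the relevant reduction directly. The direct argument is short: if $(\rho,\sigma,\tau)$ is an isotopism between two symmetric Latin squares, then applying it and then reflecting across the main diagonal shows that $(\sigma,\rho,\tau)$ is also an isotopism to the same (symmetric) target, and comparing the two via the canonical labelling forces the row and column permutations to agree, giving $\rho=\sigma$, i.e.\ an rrs-isotopism, while $\tau=\gamma'\gamma^{-1}$ governs the symbols. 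Once that reduction is in hand, the remaining steps are routine bookkeeping identical in form to \tref{t:lift}, so the only real content is nailing down this symmetry-reduction step and confirming it outputs an rrs-isotopism.
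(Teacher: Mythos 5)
Your overall skeleton matches what the paper intends: \tref{t:lift1F} is stated without proof precisely because the proof of \tref{t:lift} is meant to carry over verbatim, and the first half of your argument (apply \tref{t:isoLsamecan} to the rrs-isotopic, hence isotopic, pair $L,L'$ and extract the composite isotopism $((\alpha'\alpha^{-1}),(\beta'\beta^{-1}),(\gamma'\gamma^{-1}))$) is exactly right. Your worry about the scope of \cite[Lem.\,6]{WI05} is also resolved more simply than you propose: the paper's own description of that result, in the paragraph preceding \tref{t:lift}, is that isotopic \emph{idempotent symmetric} Latin squares are isomorphic --- it is not restricted to the totally symmetric case --- so no new variant is needed and the rest of the proof of \tref{t:lift} applies unchanged, yielding $L=L'_{(\gamma'\gamma^{-1})}$ and hence $L_\gamma=L'_{\gamma'}$.

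The genuine problem is the ``direct argument'' you offer as a fallback: it is false, and it also misidentifies what the reduction must deliver. An isotopism $(\rho,\sigma,\tau)$ between symmetric (even idempotent symmetric) Latin squares need not satisfy $\rho=\sigma$. For odd $n$, let $x\circ y\equiv 2^{-1}(x+y)\pmod n$ on $\Z_n$; this square is idempotent and symmetric, yet $\big(x\mapsto x+a,\;y\mapsto y+b,\;z\mapsto z+2^{-1}(a+b)\big)$ is an autotopism for all $a,b$, with $\rho\neq\sigma$ whenever $a\neq b$. Your intermediate observation that $(\sigma,\rho,\tau)$ is also an isotopism is correct, but two distinct isotopisms between the same pair of squares coexist happily (their difference is just an autotopism of the target), so ``comparing the two'' forces nothing. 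Moreover, what the proof actually requires is not that the row and column components of the composite isotopism agree --- in general they do not --- but that its \emph{symbol} component $\gamma'\gamma^{-1}$, applied diagonally, is itself an isomorphism between the two squares; that is exactly the content of \cite[Lem.\,6]{WI05} being invoked. (Your remark that $\rho=\sigma$ together with idempotency would upgrade to $\rho=\sigma=\tau$ is correct but moot, since $\rho=\sigma$ cannot be established.) So the proposal succeeds only along the branch where you cite the lemma in its idempotent-symmetric form; the self-contained reduction you sketch does not work.
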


However, the idempotent form corresponds to rooted $1$-factorisations
of $K_n$, where one vertex is distinguished as a root, and cannot be
moved by any isomorphism.

From this point forward we will refer to an arbitrary
$1$-factorisation $F$ of the complete graph and the corresponding
unipotent Latin square $\UU = \UU(F)$. It is clear that we could use
the algorithm defined in \sref{s:canonicalLS} to obtain a canonical
labelling of the Latin square $\UU$. However this canonical labelling
may not preserve the unipotent structure nor the symmetry of a Latin
square.

We first consider the structure of the Latin square $\UU$ derived from the
$1$-factorisation $F$ of the complete graph $K_{2n}$. Let $(i,j)$ be indices of
two rows of $\UU$. If we consider the cycle structure of $r_i \cup r_j$ then we
observe that the unipotent elements $i \circ i = 1$ and $j \circ j = 1$ of $r_i
\cup r_j$ are contained in a $2$-cycle as $i \circ j = j \circ i = k$, for some
$k \neq 1$. Let $\alpha$ and $\gamma$ be two permutations of $[n]$ such that
$\gamma(1) = 1$, it is easy to see that the labelling $(\alpha, \alpha,
\gamma)$ preserves the unipotent structure and symmetry of the Latin square
$\UU$. This informs a small set of changes to the procedures set out in
\sref{s:canonicalLS}. 
A symmetric unipotent Latin
square $\UU$ is in \emph{factor standard form} if 
\begin{itemize}
\item $\perm{1}{2}(\UU)$ contains the $2$-cycle $(12)$,
\item the remaining row cycles in the first two rows
  are in standard form and sorted by
  length, with shorter cycles in columns with higher indices,
\item $\Gamma_{1,2}(\UU)$ is lexicographically maximum among the
  cycle structures of the pairs of rows in $\UU$.
\end{itemize}
Let $\FF$ be the set of all unipotent symmetric Latin squares that
are in factor standard form.

For an arbitrary $1$-factorisation $F$, let $\FF(\UU)$ be the set of
all labellings $\UU_{\alpha\alpha\gamma}$ that can be generated in the
following way. Let $R_{\text{max}}=R_{\text{max}}(\UU)$ be the set of
ordered pairs $(i,j)$ such that $\Gamma_{i,j}(\UU)$ is
lexicographically maximum. For each $(i,j) \in R_{\text{max}}$,
we start with the partial labelling $(\alpha,\alpha,\gamma)$
whose only defined labels are
$\alpha(i)=1=\gamma(1)$, and $\alpha(j) = 2=\gamma(i\circ j)$, giving
the $2$-cycle $(12)$ as the first row cycle of
$\UU_{\alpha\alpha\gamma}$. The other values of the permutation
$\alpha$ are then assigned in each possible way such
that the remaining row cycles in the
first two rows of $\UU_{\alpha\alpha\iota}$ are contiguous and weakly
decreasing in order of length. For each choice of $\alpha$, we
let $\gamma$ be the unique permutation
such that $\UU_{\alpha\alpha\gamma}$ is in factor standard form. The
following lemma follows immediately.

\begin{lemma}\label{l:rrs}
  If $\II(\UU)$ denotes the rrs-isotopism class of a Latin
  square $\UU$, then $\FF(\UU) = \FF \cap \II(\UU)$. 
\end{lemma}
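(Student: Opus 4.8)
The plan is to mirror the proof of \lref{lem:isoclass}, establishing the two inclusions $\FF(\UU)\subseteq\FF\cap\II(\UU)$ and $\FF\cap\II(\UU)\subseteq\FF(\UU)$ directly from the definition of $\FF$ and from the procedure generating $\FF(\UU)$.

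For the forward inclusion I would first observe that every square produced by the procedure has the form $\UU_{\alpha\alpha\gamma}$, so it is obtained from $\UU$ by an rrs-isotopism and therefore lies in $\II(\UU)$. To see that it also lies in $\FF$, I would check that the rrs-isotopism $(\alpha,\alpha,\gamma)$ with $\gamma(1)=1$ preserves both symmetry (because the row and column permutations coincide) and unipotency (because the constant diagonal symbol $1$ is fixed by $\gamma$), as already noted before the statement. The remaining task is to confirm factor standard form: the initial labels $\alpha(i)=1=\gamma(1)$, $\alpha(j)=2$ and $\gamma(i\circ j)=2$ force the $2$-cycle $(12)$ in $\perm{1}{2}(\UU_{\alpha\alpha\gamma})$; the procedure explicitly arranges the remaining cycles to be contiguous, weakly decreasing in length, and (via the forced choice of $\gamma$) in standard form; and choosing $(i,j)\in R_{\max}(\UU)$ guarantees that $\Gamma_{1,2}(\UU_{\alpha\alpha\gamma})=\Gamma_{i,j}(\UU)$ is lexicographically maximum, since cycle structure is an isotopism invariant and the isotopism sends rows $i,j$ to rows $1,2$.

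For the reverse inclusion I would take an arbitrary $M\in\FF\cap\II(\UU)$ and exhibit the choices in the procedure that generate it. Since $M\in\II(\UU)$, there is an rrs-isotopism $(\alpha,\alpha,\gamma)$ with $M=\UU_{\alpha\alpha\gamma}$, and unipotency of $M$ forces $\gamma$ to fix the common diagonal symbol, which we take to be $1$. Setting $(i,j)=(\alpha^{-1}(1),\alpha^{-1}(2))$, the factor-standard-form requirement that $\Gamma_{1,2}(M)$ be lexicographically maximum, together with the invariance of cycle structure under the isotopism, shows that $\Gamma_{i,j}(\UU)$ is lexicographically maximum, i.e.\ $(i,j)\in R_{\max}(\UU)$. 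The $2$-cycle $(12)$ in $M$ then pins down the initial labels $\alpha(i)=1=\gamma(1)$, $\alpha(j)=2$ and $\gamma(i\circ j)=2$ used by the procedure, while the sortedness and standard-form conditions on the remaining cycles of $M$ mean exactly that $\alpha$ is one of the column labellings the procedure enumerates. As $\gamma$ is then the unique permutation making $\UU_{\alpha\alpha\gamma}$ factor standard, we conclude $M\in\FF(\UU)$.

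The argument is essentially bookkeeping: no inequality or probabilistic estimate is involved. The only point requiring care is the exact correspondence between the \emph{intrinsic} defining conditions of $\FF$ (properties of the finished square) and the \emph{operational} conditions imposed while building a member of $\FF(\UU)$; in particular one must verify that preserving unipotency and symmetry forces the row and column permutations to agree and $\gamma(1)=1$, and that the lexicographic-maximality clause of factor standard form is precisely what selects the pairs in $R_{\max}(\UU)$. This is the analogue of the remark that \lref{lem:isoclass} follows immediately from its set-up.
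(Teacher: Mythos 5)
Your proposal is correct and matches the paper, which simply asserts that the lemma "follows immediately" from the definitions of $\FF$ and $\FF(\UU)$; your two inclusions are exactly the definitional bookkeeping that assertion elides. The only point worth noting is that you (reasonably) read the unipotent diagonal symbol as $1$ throughout, which is the paper's encoding convention and is needed for the reverse inclusion to pin down $\gamma(1)=1$.
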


Note that \lref{l:rrs} fails if rrs-isotopism is replaced by
isotopism.  There are unipotent symmetric Latin squares that are
isotopic but not rrs-isotopic, although curiously it was shown in
\cite{MW22} that this is only possible for orders that are divisible
by $4$.

We now give a small set of adjustments to the algorithm from
\sref{s:canonicalLS}, in order to compute a canonical labelling for
unipotent symmetric Latin squares. This gives an algorithm for finding
the canonical labelling of a $1$-factorisation that is more natural
than a direct application of the algorithm given in
\sref{s:canonicalLS}.

The only major changes are to Lines~\ref{line:updateP} and
\ref{line:firstbranch} of \aref{a:canonical}.  Before updating $P$,
define the partial labelling $(\alpha',\alpha',\gamma')$ such that
$\alpha'(i) = 1=\gamma'(1)$, $\alpha'(j) = 2=\gamma'(i\circ j)$ and
all other elements are unlabelled.  When $P$ is updated in
\Lref{line:updateP}, this should be done by calculating the cycle
structure but ignoring the row cycle of length $2$ on the idempotent
elements, and incrementing the value $P[k]$ by $2$ for each $k\in[n]$. 
Finally the call to \aref{a:branch} in \Lref{line:firstbranch} should
be replaced with $(\alpha^*,\alpha^*,\gamma^*) \gets$
\Call{Branch}{$L$, $(i,j)$, $(\alpha',\alpha',\gamma')$}.

By initialising the partial labelling $(\alpha',\alpha',\gamma')$ in
this way, any subsequent partial labelling formed by calling
\aref{a:rowcyclelabel} preserves symmetry, since the column index $b$
in \Lref{line:updatebeta} and the row index $a$ in
\Lref{line:updatealpha} must in fact be the same element, given that
$c_1=i$.  Therefore all isotopisms generated by \aref{a:canonical} are
of the form $(\alpha,\alpha,\gamma)$. Given that the initial Latin
square is unipotent and symmetric, its canonical form must also be
unipotent and symmetric.  Thus the following analogue to
\tref{t:isoLsamecan} is immediate.

\begin{theorem}\label{t:isoFsamecan}
  Let $L_1$ and $L_2$ be two rrs-isotopic unipotent symmetric Latin
  squares and let $L_1'$ and $L_2'$ be the Latin squares obtained by
  running the modified \aref{a:canonical} on $L_1$ and $L_2$,
  respectively. Then $L_1'=L_2'$.
\end{theorem}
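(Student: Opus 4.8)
The plan is to mirror the structure of the proof of \tref{t:isoLsamecan}, adapting it to the modified algorithm that works with isomorphisms $(\alpha,\alpha,\gamma)$ rather than general isotopisms. Let $\phi=(\rho,\rho,\sigma)$ be an rrs-isotopism from $L_1$ to $L_2$; since both squares are unipotent symmetric and the algorithm only produces labellings of the form $(\alpha,\alpha,\gamma)$, I would set up the same search-tree comparison as before. For each pair $(i_1,j_1)\in R_{\text{max}}(L_1)$, the corresponding pair in $L_2$ is $(i_2,j_2)=(\rho(i_1),\rho(j_1))$, and I would verify that this pair indeed lies in $R_{\text{max}}(L_2)$ because rrs-isotopism preserves the cycle structure $\Gamma_{i,j}$ of each pair of rows. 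Just as in \tref{t:isoLsamecan}, it suffices to establish a depth-by-depth correspondence between the nodes of the two search trees $T_1$ and $T_2$, showing by induction on depth $k$ that every node $\psi_1\in A_k$ has a partner $\psi_2\in B_k$ with $\psi_1=\psi_2\phi$ (composing triples componentwise, with the convention $\pi(\star)=\star$).

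The inductive step follows the template of the earlier proof almost verbatim, with two substitutions. First, wherever \aref{a:rowcyclelabel} was invoked in \tref{t:isoLsamecan}, I would invoke \aref{a:rowcyclelabelsts}; the key fact to check is that if two partial isomorphisms satisfy $\psi_1^*=\psi_2^*\phi$ and we feed in corresponding symbols $s_1$ and $s_2=\sigma(s_1)$ lying in inverse pairs of equal length, then the outputs again satisfy the same relation. Here the extra care is that \aref{a:rowcyclelabelsts} labels an entire inverse pair $(R,R')$ at once, and I would argue that because $\phi$ is an rrs-isotopism it carries the inverse pair through row $i_1,j_1$ to the inverse pair through $i_2,j_2$, so the simultaneous labelling of $R$ and $R'$ is compatible with $\phi$. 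Second, in the initialisation I must account for the modified \Lref{line:updateP} and \Lref{line:firstbranch}: both trees begin not at $(\star,\star,\star)$ but at the partial labelling that fixes the singular $2$-cycle $(12)$ on the idempotent elements, sending $i_x\mapsto 1$, $j_x\mapsto 2$, and $i_x\circ j_x\mapsto 2$ in the symbol coordinate with $\gamma(1)=1$. Since $\phi$ maps the idempotent cell $i_1\circ i_1=1$ to $i_2\circ i_2=1$ and maps $i_1\circ j_1$ to $i_2\circ j_2$, these two initial labellings are themselves related by $\phi$, which establishes the base case $A_0=B_0\phi$.

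The remainder of the argument is essentially unchanged: the global list $P$ depends only on the cycle structure of the rows and the lengths of cycles already labelled, so it evolves identically in both processes as long as they label inverse pairs of equal lengths; and because $L_{1,\,\alpha_1^*\alpha_1^*\gamma_1^*}=L_{2,\,\alpha_2^*\alpha_2^*\gamma_2^*}$ at each stage, the $\Succ'$-least $\star$ occurs in the same cell of both labelled squares, with preimages related by $\sigma$. Thus the sequence of symbols chosen by \aref{a:extend} in the two runs corresponds throughout, and the leaves of $T_1$ and $T_2$ are in $\phi$-correspondence. Since the modified \aref{a:canonical} returns the lexicographically minimal leaf over all pairs in $R_{\text{max}}$, and lexicographic comparison is preserved under the correspondence, the two canonical outputs coincide: $L_1'=L_2'$.

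I expect the main obstacle to be the verification that \aref{a:rowcyclelabelsts} respects the $\phi$-correspondence on inverse pairs. Unlike the single-cycle labelling of \aref{a:rowcyclelabel}, here one symbol $s$ determines the labelling of both $R$ and its inverse $R'$ via the total-symmetry relation of \lref{l:rowcycleSTS}, adapted to the unipotent symmetric setting; I must confirm that an rrs-isotopism $\phi$ sends the inverse pair containing $s_1$ to the inverse pair containing $\sigma(s_1)$ and that the induced labels on columns and symbols match up after composition with $\phi$. Everything else is a faithful transcription of \tref{t:isoLsamecan}, and indeed the paper signals as much by calling the result ``immediate.''
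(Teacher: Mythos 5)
Your overall strategy is the right one, and it is what the paper intends when it calls this result ``immediate'': the modified algorithm is the original \aref{a:canonical} with a different initial partial labelling, so the search-tree induction from \tref{t:isoLsamecan} transfers, with the base case now being the nontrivial initial labelling $\alpha'(i_x)=1=\gamma'(1)$, $\alpha'(j_x)=2=\gamma'(i_x\circ j_x)$ (and your check that an rrs-isotopism between unipotent squares fixes the unipotent symbol, so the two initial labellings are $\phi$-related, is correct and worth making explicit). However, you have imported the wrong machinery for the inductive step. The one-factorisation adaptation in \sref{s:fact} does \emph{not} use \aref{a:rowcyclelabelsts}, inverse pairs, or $\Succ'$ --- all of that belongs to the Steiner quasigroup adaptation of \sref{s:STS} and to \tref{t:isoSsamecan}. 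The modified algorithm here calls the unmodified \aref{a:rowcyclelabel} and the original $\Succ$; the only changes are to Lines~\ref{line:updateP} and \ref{line:firstbranch} of \aref{a:canonical}.

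This matters beyond bookkeeping, because the fact you say you would need to verify --- that an rrs-isotopism ``carries the inverse pair through rows $i_1,j_1$ to the inverse pair through $i_2,j_2$'' --- rests on a structure that does not exist in this setting. \lref{l:rowcycleSTS} and the inverse-pair decomposition are consequences of \emph{total} symmetry (commutativity plus semisymmetry); a unipotent commutative quasigroup is merely commutative, and there is in general no second row cycle whose columns and symbols are the symbols and columns of a given one. An argument built on labelling inverse pairs would therefore fail here. What actually preserves the symmetric unipotent form is much simpler: since the initial labelling forces $c_1=i$, at each step of \aref{a:rowcyclelabel} the row index $a$ found at \Lref{line:updatealpha} equals the column index $b$ found at \Lref{line:updatebeta} (by $L[a,i]=\sigma=L[i,b]$ and symmetry), so every labelling produced has the form $(\alpha,\alpha,\gamma)$ automatically, and the single-row-cycle correspondence argument from \tref{t:isoLsamecan} applies verbatim. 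Replace your inverse-pair discussion with this observation and the proof is correct.
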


Finally as with the Steiner quasigroup case, the worst case running
time is unaffected by the changes and we have:

\begin{theorem}\label{t:factcanonical}
  Let $L$ be a unipotent symmetric Latin square and let $\ell(L)$ be
  the length of the longest row cycle in $L$. \aref{a:canonical}
  determines the canonical labelling of $L$ in
  $O(n^{\log_2{n}-\log_2{\ell(L)} + 3})$ time.
\end{theorem}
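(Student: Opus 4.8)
The plan is to reprove \tref{t:timetoextendpartiallabelling} almost verbatim, since the modifications that adapt \aref{a:canonical} to unipotent symmetric Latin squares are confined to the initialisation steps (Lines~\ref{line:updateP} and~\ref{line:firstbranch}) and leave the combinatorial skeleton of the search untouched. As before, I would analyse the search tree implicit in the modified \aref{a:canonical}: the root is the initial partial labelling, each leaf corresponds to one of the (symmetric) isotopisms produced, and the total running time is bounded by the number of leaves times the cost per leaf. The whole argument then reduces to re-establishing two supporting facts in this modified setting.

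First I would establish the analogue of \lref{lem:polyextend}, namely that each leaf is reached in $O(n^2)$ time. The modified procedure still labels one row cycle per $\star$ encountered, via \aref{a:rowcyclelabel} (now forced to act symmetrically by the special initial labelling, since $c_1=i$ forces the column index $b$ and the row index $a$ to coincide), and the extension phase still inspects each of the at most $n^2$ cells of $L_{\alpha\beta\gamma}$ at most once. As \aref{a:rowcyclelabel} labels a single row cycle in $O(n)$ time and at most $n$ row cycles are labelled in total, the $O(n^2)$ per-leaf bound is unchanged; the symmetry constraint and the two pre-labelled idempotent elements only reduce the work. Second I would establish the analogue of \lref{lem:symdouble}: executing one branch step (Lines~\ref{line:rowoutput}--\ref{line:extendoutput} of \aref{a:branch}) on a state with $k$ labelled symbols yields a child with at least $2k$. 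The proof of \lref{lem:symdouble} uses only that $L_{\alpha\beta\gamma}$ is a Latin subsquare properly contained in the child $L_{\alpha'\beta'\gamma'}$, that the newly labelled row cycle lies outside it, and that \aref{a:extend} completes the labelled region to a Latin square; none of these structural facts depends on the coupling $\alpha=\beta$, so the argument carries over directly.

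With these two facts in hand I would assemble the bound exactly as in \tref{t:timetoextendpartiallabelling}. For $(i,j)\in R_{\max}$, the first branch step labels a longest \emph{unlabelled} row cycle, whose length is $\ell(L)$ (the idempotent $2$-cycle is already labelled, so for $\ell(L)>2$ a genuine longest cycle remains unlabelled), and \aref{a:extend} grows it to a Latin subsquare of order at least $\ell(L)$. Hence a depth-$1$ state carries at least $\ell(L)$ labelled elements and, by the doubling property, a depth-$k$ state carries at least $2^{k-1}\ell(L)$; the tree therefore has depth $d\le\log_2(n/\ell(L))+1$. Each state has at most $|S|\le n$ children, so there are $O(n^d)$ leaves, and at $O(n^2)$ per leaf the total time is $O(n^{d+2})=O(n^{\log_2 n-\log_2\ell(L)+3})$.

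The main obstacle here is bookkeeping rather than conceptual: I must check that the modified update of the list $P$ (which ignores the idempotent $2$-cycle and increments each $P[k]$ by $2$) together with the symmetric use of \aref{a:rowcyclelabel} preserves the invariants on which both supporting facts rest. In particular I would verify that the pre-labelled idempotent elements never interfere with the existence and choice of the column $c_1$ with $\beta'(c_1)=1$, nor with the claim that a freshly labelled row cycle lies wholly outside the current subsquare, so that the $O(n)$-per-cycle labelling and the doubling argument both remain valid. Once these invariants are confirmed, the worst-case exponent is exactly that of the unmodified algorithm and the result follows.
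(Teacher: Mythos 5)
Your proposal is correct and follows essentially the same route as the paper, which proves this theorem simply by observing that the modifications for unipotent symmetric squares do not change the worst-case analysis of Theorem~\ref{t:timetoextendpartiallabelling}; you have merely spelled out the verification (the analogues of Lemmas~\ref{lem:polyextend} and~\ref{lem:symdouble}, and the bookkeeping for $P$ and the pre-labelled idempotent elements) that the paper leaves implicit.
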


Similarly to the Steiner quasigroup case, analogues to
Theorems~\ref{t:largeSS} and~\ref{t:longcyc} for $1$-factorisations
would allow us to determine the average-case time complexity of
\aref{a:canonical}.

\begin{conjecture}\label{con:1fact}
  Let $\UU$ be the Latin square representing a $1$-factorisation of
  $K_{2n}$ selected uniformly at random.  With probability $1-o(1)$,
  the length $\ell(\UU)$ of the longest row cycle in $\UU$ exceeds the
  size of the largest proper sub-$1$-factorisation of $\UU$.
\end{conjecture}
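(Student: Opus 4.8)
The plan is to establish \cref{con:1fact} by proving two quantitative statements that together force the longest row cycle to outgrow every proper subsystem, exactly mirroring the way \tref{t:largeSS} and \tref{t:longcyc} combine in the proof of \tref{t:polycanonical}. Writing $\UU$ for the unipotent symmetric Latin square of order $2n$ attached to a random $1$-factorisation of $K_{2n}$, I would prove (a) that with probability $1-o(1)$ the largest proper sub-$1$-factorisation of $\UU$ has order at most $n^{1/2}\log^{1/2+\eps}n$, and (b) that with probability $1-o(1)$ the longest row cycle of $\UU$ has length at least $n^{1/2}\log^{1-\eps}n$. Since a sub-$1$-factorisation of order $k$ sits inside $\UU$ as a principal $k\times k$ unipotent symmetric Latin subsquare, a row cycle longer than the order of a proper subsystem cannot be confined to that subsystem; thus (a) and (b) with $\eps$ small immediately yield the conjecture. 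The essential point is that $\UU$ is \emph{not} distributed as a uniformly random Latin square, so neither \tref{t:largeSS} nor \tref{t:longcyc} can be invoked directly, and each must be re-proved in the $1$-factorisation model.

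For part (a) I would replay the structure of the proof of \tref{t:largeSS}. First, a union bound over the $\binom{2n}{k}$ choices of support reduces matters to the probability $P_k$ that a random $1$-factorisation contains a \emph{fixed} subsystem of order $k$. In the regime $k=\alpha\cdot 2n$ with $\alpha$ bounded away from $0$, I would bound the number of $1$-factorisations carrying the subsystem by an enumeration bound for the subsystem times a crude completion count, and divide by the total number of $1$-factorisations; as before the exponent should be a concave function of $\alpha$, negative on $(0,1)$. In the sparse regime $k=o(n)$ I would use a pointwise subsystem-probability estimate. Both steps require $1$-factorisation analogues of the imported ingredients: an upper bound on the number of $1$-factorisations of $K_{2n}$ (replacing \tref{t:uppbndLS}), a matching lower bound on the number of partial $1$-factorisations, i.e.\ families of $k$ edge-disjoint $1$-factors (replacing \tref{t:lowbndLR}), a bounded-ratio completion estimate (replacing \tref{t:extensions}), and a Godsil--McKay type formula for the probability that a random partial $1$-factorisation contains a prescribed subsystem (replacing \tref{t:randLR}).

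For part (b) I would follow the proof of \tref{t:longcyc}. The union of two $1$-factors of $\UU$ decomposes $K_{2n}$ into even cycles, and these govern the row cycles in the corresponding pair of rows; the heuristic from two independent uniform perfect matchings predicts a longest cycle of linear order, far exceeding $n^{1/2}$. By vertex symmetry of the model it suffices to analyse a single fixed pair of rows. To make this rigorous I would seek a symmetric analogue of Corollary~4.5 of \cite{CGW08}, bounding the probability that a fixed pair of rows of $\UU$ realises a prescribed cycle structure by a product of reciprocals of the cycle lengths, together with an analogue of Theorem~4.9 of \cite{CGW08} bounding the number of such cycles. Given these, the same Hardy--Ramanujan union bound over partitions used in \tref{t:longcyc} would deliver the required lower bound on $\ell(\UU)$.

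The main obstacle is the enumeration and conditioning theory underpinning part (a), and to a lesser extent part (b). The proofs of Theorems~\ref{t:extensions} and~\ref{t:randLR}, and the results of \cite{CGW08}, all exploit the ability to build a Latin rectangle one row at a time, so that completing or extending reduces to nearly independent choices; the commutativity of $\UU$ and its unipotent diagonal couple rows to columns and destroy this row-by-row independence. Establishing a bounded completion ratio and a sharp subsystem-probability estimate for $1$-factorisations --- presumably via switching or entropy arguments adapted to proper edge-colourings of $K_{2n}$ --- is therefore the crux, and is precisely the step the paper flags as currently elusive. Everything downstream of these estimates is a routine transcription of the arguments already given for Latin squares.
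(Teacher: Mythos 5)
You have been asked to prove \cref{con:1fact}, but this statement is presented in the paper as an open conjecture: the authors give no proof of it, and indeed they state it precisely because the analogues of \tref{t:largeSS} and \tref{t:longcyc} for random $1$-factorisations are not available. Your proposal correctly identifies the intended logical skeleton --- prove (a) that the largest proper sub-$1$-factorisation is whp of order at most $n^{1/2}\log^{1/2+\eps}n$ and (b) that the longest row cycle is whp of length at least $n^{1/2}\log^{1-\eps}n$, then combine them as in the proof of \tref{t:polycanonical} --- and you are also right that $\UU$ is not distributed as a uniform Latin square, so that \tref{t:largeSS} and \tref{t:longcyc} cannot simply be cited. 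But nothing in the proposal is actually established. Every load-bearing ingredient --- the upper bound on the number of $1$-factorisations of $K_{2n}$, the matching lower bound on the number of partial $1$-factorisations, the bounded completion-ratio estimate, the Godsil--McKay-type probability that a random partial $1$-factorisation contains a prescribed subsystem, and the symmetric analogues of the cycle-structure bounds of \cite{CGW08} --- is introduced as a hypothesis rather than proved, and you concede in your final paragraph that obtaining them is ``precisely the step the paper flags as currently elusive.'' A reduction of one open problem to four or five other open problems is a research programme, not a proof; the gap here is not a single missing step but the entire analytic content of the argument.

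If you want to salvage something checkable, the place to start is the deduction at the end of your first paragraph: you should verify carefully that a sub-$1$-factorisation of order $k$ really does correspond to a $k\times k$ subsquare of the unipotent encoding $\UU$ (it does, on the rows and columns indexed by its vertex set, with symbol set consisting of $1$ together with the labels of the $k-1$ ambient factors), and that a row cycle of length exceeding $k$ therefore cannot lie inside it. That part is sound and is the only portion of the proposal that can be certified without new enumeration theory for $1$-factorisations.
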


\begin{theorem}
  If \cref{con:1fact} is true, then canonical labelling (and hence
  also isomorphism testing) of $1$-factorisations of complete graphs,
  can be achieved in average-case $O(n^5)$ time.
\end{theorem}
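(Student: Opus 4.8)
The plan is to follow the proof of \tref{t:polycanonical} essentially verbatim, with \cref{con:1fact} supplying the structural input that Theorems~\ref{t:largeSS} and~\ref{t:longcyc} supplied there, and with the modified \aref{a:canonical} of this subsection in place of the original. Let $\UU$ be a random $1$-factorisation of $K_{2n}$, viewed as a unipotent symmetric Latin square of order $N=2n$, and write $m(\UU)$ for the order of the largest proper sub-$1$-factorisation of $\UU$. I would split the sample space into the good event $\{\ell(\UU)>m(\UU)\}$ and its complement. By \cref{con:1fact} the good event has probability $1-o(1)$, and on it the longest row cycle lies in no proper sub-$1$-factorisation. Hence, for every $(i,j)\in R_{\max}$, the inverse pair labelled on \Lref{line:rowoutput} of the modified \textsc{Branch} together with the extension on \Lref{line:extendoutput} already labels all of $\UU$, so the nested call on \Lref{line:newbranch} is never reached. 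As in \tref{t:polycanonical} there are at most $\binom N2$ pairs in $R_{\max}$ and at most $N$ admissible starting symbols per pair, each handled in $O(N^2)$ time by the analogue of \lref{lem:polyextend}; this yields $O(N^2\cdot N\cdot N^2)=O(n^5)$ on the good event.

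It then remains to bound the average contribution of the complementary bad event $\{\ell(\UU)\le m(\UU)\}$. On this event I would invoke the worst-case guarantee of \tref{t:factcanonical}, which gives running time $O\bigl(N^{\log_2 N-\log_2\ell(\UU)+3}\bigr)=O\bigl(N^{3}(N/\ell(\UU))^{\log_2 N}\bigr)$. Partitioning the bad event by dyadic ranges $\ell(\UU)\in[t,2t)$, the contribution of each range is at most
\[
\Pr\!\bigl(\ell(\UU)\in[t,2t),\ \ell(\UU)\le m(\UU)\bigr)\cdot O\bigl(N^{3}(N/t)^{\log_2 N}\bigr).
\]
For the top range $t\ge N/4$ the factor $(N/t)^{\log_2 N}\le 4^{\log_2 N}=N^{2}$ is already polynomial, and since by \cref{con:1fact} the total probability of the bad event is $o(1)$, these instances contribute $o(N^{5})$; they require no further tail input.

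The main obstacle is the remaining regime $\ell(\UU)<N/4$, where the factor $(N/t)^{\log_2 N}$ is quasipolynomial and must be absorbed by a correspondingly small probability. This is exactly the point at which \tref{t:polycanonical} used the \emph{exponential} tails of Theorems~\ref{t:largeSS} and~\ref{t:longcyc} to drive the analogous product to $o(1)$, whereas \cref{con:1fact} as stated offers only a $1-o(1)$ guarantee and so gives no control on this product by itself. To complete the proof of the theorem as worded, the crux is therefore to show, for random $1$-factorisations, that $\Pr(\ell(\UU)\in[t,2t))$ decays fast enough in the small-$t$ regime to beat $(N/t)^{\log_2 N}$, namely to establish the $1$-factorisation analogues of the lower-tail bound behind \tref{t:longcyc} and of the large-subsquare bound of \tref{t:largeSS}. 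I would attempt these by the switching and second-moment arguments that underpin those two theorems, adapted to the unipotent symmetric model; once such tails are available the bad-event contribution is $o(1)$ and the two estimates combine to give the claimed average-case $O(n^5)$ bound, exactly as in \tref{t:polycanonical}.
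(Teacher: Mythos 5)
The paper gives no explicit proof of this theorem: it is stated as an immediate analogue of \tref{t:polycanonical}, and the intended argument is precisely your first paragraph. On the event of \cref{con:1fact} no proper sub-$1$-factorisation contains a row cycle of maximum length, so the Latin subsquare reached by \textsc{Extend} after such a cycle is labelled must be all of $\UU$, and \Lref{line:newbranch} is never executed; with $O(n^2)$ pairs in $R_{\max}$, $O(n)$ starting symbols per pair, and $O(n^2)$ work per extension by the analogue of \lref{lem:polyextend}, this gives $O(n^5)$. (One small correction: the $1$-factorisation modification still labels single row cycles via \aref{a:rowcyclelabel}, with symmetry forced by $c_1=i$; inverse pairs and \textsc{LabelInversePair} belong to the Steiner modification of \sref{s:STS}, not to this one.)

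Your second and third paragraphs go beyond the paper, and your diagnosis there is correct: under a literal expected-running-time reading, the bare $1-o(1)$ of \cref{con:1fact} cannot absorb the quasipolynomial worst case of \tref{t:factcanonical} in the regime where $\ell(\UU)$ is small --- exactly the step at which \tref{t:polycanonical} consumed the exponential tails of Theorems~\ref{t:longcyc} and~\ref{t:largeSS}. The paper is silent on this point; it implicitly discards the $o(1)$-probability bad set, presumably because (as it says of the Steiner analogue) the conjecture is expected to hold ``with room to spare''. So you have exposed a genuine looseness in the paper's conjecture-to-theorem step rather than made an error. Be aware, though, that your proposed repair does not rescue the literal statement either: proving $1$-factorisation analogues of Theorems~\ref{t:largeSS} and~\ref{t:longcyc} is exactly the ingredient the paper itself describes as missing (and, in the Steiner case, ``elusive''), you do not carry it out, and if such tail bounds were available they would imply \cref{con:1fact} outright and render the hypothesis redundant. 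A faithful reconstruction of the paper's intent is your first paragraph alone, with the bad event either read out of the statement (``$O(n^5)$ time with probability $1-o(1)$'') or handled by assuming the $o(1)$ in the conjecture decays faster than $n^{-\log_2 n}$.
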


We remark that although \cref{con:1fact} was stated for unipotent symmetric
Latin squares, we also believe it to be true for idempotent symmetric
Latin squares.

\section{Other combinatorial objects}\label{s:losotros}

In this section we briefly discuss applications of our algorithm to
isomorphism testing for other combinatorial objects that can be
encoded using Latin squares.

The isotopism problem for quasigroups is equivalent to the isotopism
problem for Latin squares as studied in \sref{s:canonicalLS}.
However, quasigroup isomorphism is in some ways more similar to the
isomorphism problem for Steiner triple systems that we studied in
\sref{s:STS}. To begin, we write the Cayley table of our quasigroup as
a Latin square $L$.  We find the set of pairs of rows of $L$ with the
lexicographically largest cycle structure.  For each such pair
$(i,j)$, we then consider isomorphisms which label the row cycles in
that pair of rows in a similar way to \aref{a:rowcyclelabel}. We
use the same list $P$ which pre-allocates columns according to
the lengths of cycles. Then when labelling a cycle of length $\ell$ we
will map its column indices to $P[\ell],\dots,P[\ell]+\ell-1$. We try
all $\ell$ options for which of its columns is put first (i.e.~in
position $P[\ell]$). Each subsequent column is determined by the rule
that after labelling column $c$ we label the column $c'$ that satisfies
$i\circ c'=j\circ c$.
In this way, we produce contiguous row cycles that are arranged in
weakly decreasing order of length. We do not have any control over the
symbols in these cycles, since we must apply the same permutation to
the symbols that we apply to the columns (and rows).  We first label
the row cycle that hits column $i$, then the row cycle that hits
column $j$ (if that is a different cycle). After that, we label row cycles
in an order decided by the first $\star$ that we find in
the array induced by the labelled rows and labelled columns. This
array is traversed with the same $\Succ$ function that we used
in \sref{s:canonicalLS}. When we encounter a $\star$ its preimage
will be in the next row cycle that we label. If we complete a
traversal without finding a $\star$, then the labelled rows
and columns must induced a subquasigroup. In such a situation we branch
by labelling a longest unlabelled row cycle. 

Our method will run in polynomial time for any family of quasigroups
in which no longest row cycle lies inside a chain of nested
subquasigroups where the length of the chain is unbounded (hence
requiring an unbounded number of branch steps). In particular, this is
true of almost all quasigroups, since by \tref{t:largeSS} and
\tref{t:longcyc} they will contain a row cycle longer than the order
of their largest subquasigroup. We thus have:

\begin{theorem}
  Quasigroup isomorphism can be solved in average-case polynomial time.
\end{theorem}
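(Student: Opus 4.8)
The plan is to follow the proof of \tref{t:polycanonical} almost verbatim, replacing ``subsquare'' by ``subquasigroup'' throughout. A uniformly random quasigroup of order $n$ is precisely a uniformly random Latin square $L$ of order $n$, and every proper subquasigroup of $L$ is in particular a proper subsquare whose row, column and symbol sets coincide. Hence the order of the largest proper subquasigroup is bounded above by the order of the largest proper subsquare, and \tref{t:largeSS} applies to it directly.

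First I would fix $0<\eps<1/4$. By \tref{t:largeSS}, with probability $1-\exp(-\omega(n\log^2n))$ the square $L$ has no proper subsquare, and hence no proper subquasigroup, of order exceeding $n^{1/2}\log^{1/2+\eps}n$. By \tref{t:longcyc}, with probability $1-o(\exp(-\frac14 n^{1/2}))$ the longest row cycle of $L$ has length $\ell(L)\ge n^{1/2}\log^{1-\eps}n$. Since $1-\eps>1/2+\eps$ whenever $\eps<1/4$, these two events together give, with high probability, that $\ell(L)$ strictly exceeds the order of the largest proper subquasigroup of $L$. Moreover every pair $(i,j)\in R_{\max}$ has a longest cycle of length exactly $\ell(L)$, because the lexicographically maximum cycle structure has maximal first entry.

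Next I would argue that on this high-probability event the algorithm needs only a single branching step for each $(i,j)\in R_{\max}$. The first step labels a longest row cycle, of length $\ell(L)$, and the ensuing extension phase closes the labelled rows and columns into a subquasigroup of order at least $\ell(L)$. As $\ell(L)$ exceeds the order of every proper subquasigroup, this subquasigroup cannot be proper, so it is all of $L$ and no second branch is triggered. There are at most $\binom n2$ pairs in $R_{\max}$, each offering at most $n$ choices for the first symbol, and by the analogue of \lref{lem:polyextend} each extension costs $O(n^2)$; hence the total time is $O(n^2\cdot n\cdot n^2)=O(n^5)$ with high probability. The bad cases are controlled exactly as in \tref{t:polycanonical}: the worst-case bound is $n^{O(\log n)}$ (the isomorphism analogue of \tref{t:timetoextendpartiallabelling}, obtained because each branch step at least doubles the number of labelled points, as in \lref{lem:symdouble}), and multiplying by the failure probability $o(\exp(-\frac14 n^{1/2}))$ leaves an $o(1)$ contribution to the expected running time.

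The main obstacle I expect is not the probabilistic estimate, which is immediate from the two stated theorems, but the structural verification that, in the isomorphism variant where the same permutation is applied to rows, columns and symbols, the only configurations forcing a second branch step are genuine subquasigroups rather than arbitrary subsquares. Concretely, I would need to confirm that whenever an extension traversal completes without encountering a $\star$ the labelled rows and columns induce a \emph{subquasigroup}, so that its order is governed by \tref{t:largeSS}, and that a single extension started from a cycle of length $\ell(L)$ indeed fills the whole table under the inequality above. Establishing the canonical-labelling correctness (the isomorphism analogue of \tref{t:isoLsamecan}) is also required, but this should be routine given the parallel development for Steiner quasigroups in \sref{s:STS}.
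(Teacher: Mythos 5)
Your proposal matches the paper's argument: the paper also derives this theorem by observing that a subquasigroup is in particular a subsquare, so \tref{t:largeSS} and \tref{t:longcyc} together imply that almost all quasigroups have a row cycle longer than their largest proper subquasigroup, after which the single-branch analysis and bad-case control of \tref{t:polycanonical} carry over unchanged. The structural points you flag (that a completed traversal yields a subquasigroup, and that the canonicity argument adapts to the isomorphism setting) are exactly the ones the paper addresses in its informal discussion preceding the theorem, so your route is essentially identical.
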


Colbourn and Colbourn \cite{CC80} noted that in polynomial time
their algorithm canonically labelled quasigroups who 
largest proper subquasigroup has bounded order.

A \emph{loop} is a quasigroup with an identity element. The Cayley
table of a loop is a Latin square with one row and one column in which
the symbols occur in order. Every quasigroup is isotopic to a loop.
Indeed, isotopism provides an $n!(n-1)!$ to 1 map from quasigroups to loops
which preserves the size of the largest subsquare and the length of the
longest row cycle. Hence, we know that almost all loops have a row cycle
longer than the order of their largest subloop, which gives:

\begin{theorem}
  Loop isomorphism can be solved in average-case polynomial time.
\end{theorem}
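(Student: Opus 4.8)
The plan is to apply the quasigroup isomorphism algorithm described above to the Cayley table of a loop, and to show that its average running time is polynomial when the average is taken over the uniform distribution on loops. Two loops are isomorphic precisely when their Cayley tables are isomorphic as quasigroups (a quasigroup isomorphism between loops necessarily fixes the identity), so no new algorithm is needed; all the content lies in the average-case analysis, and in particular in transferring Theorems~\ref{t:largeSS} and~\ref{t:longcyc} from random Latin squares to random loops.

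First I would set up the correspondence. After relabelling its identity to the element $1$, a loop is exactly a reduced Latin square, so a uniformly random loop of order $n$ is a uniformly random reduced Latin square. The $n!(n-1)!$-to-$1$ reduction map from Latin squares to loops noted above is surjective, and since every loop has the same number of preimages it carries the uniform distribution on Latin squares to the uniform distribution on loops. It also preserves both the length of the longest row cycle and the order of the largest proper subsquare, because these are isotopism invariants: row and column permutations leave each $\perm{i}{j}$ fixed up to conjugacy, so cycle structures are unchanged, and subsquare orders are transported bijectively by any isotopism. Consequently the event ``the longest row cycle exceeds the largest proper subsquare'' has the same probability for a random loop as for a random Latin square, and for $\eps<1/4$ this probability is $1-o(\exp(-\tfrac14 n^{1/2}))$ by Theorems~\ref{t:largeSS} and~\ref{t:longcyc}. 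As the order of the largest proper subloop is at most that of the largest proper subsquare, it follows that almost every loop has a longest row cycle contained in no proper subquasigroup.

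Finally I would re-run the timing argument of Theorem~\ref{t:polycanonical} over the loop distribution. For the \emph{good} loops a longest row cycle lies in no proper subquasigroup, so the algorithm needs a single branch step for each pair in $R_{\max}$ and terminates in $O(n^5)$ time. For the remaining loops I would invoke the worst-case bound of Theorem~\ref{t:timetoextendpartiallabelling}; since loops are Latin squares this is $n^{O(\log n)}$, and multiplied by the $o(\exp(-\tfrac14 n^{1/2}))$ probability of a bad loop it contributes only $o(1)$ to the average, exactly as in the proof of Theorem~\ref{t:polycanonical}. I expect the main obstacle to be the distributional bookkeeping: one must confirm that the reduction map is genuinely \emph{uniformly} $n!(n-1)!$-to-$1$ (so that a uniform Latin square yields a uniform loop rather than a biased one), that the two relevant invariants survive the map, and that the inequality between the largest proper subloop and the largest proper subsquare points the right way so that the long-cycle dominance is inherited. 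Once these are checked, the bound is immediate.
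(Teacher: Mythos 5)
Your proposal is correct and follows essentially the same route as the paper: the paper's justification is precisely that isotopic reduction gives a uniformly $n!(n-1)!$-to-$1$ map from Latin squares to loops preserving the largest subsquare order and the longest row-cycle length, so Theorems~\ref{t:largeSS} and~\ref{t:longcyc} transfer to random loops and the quasigroup algorithm needs only one branch step with high probability, with the bad cases absorbed exactly as in Theorem~\ref{t:polycanonical}. Your write-up simply makes explicit the distributional bookkeeping that the paper leaves implicit.
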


The worst case performance for our quasigroup isomorphism algorithm
remains $n^{O(\log n)}$, matching the original bound by Miller
\cite{Mil78}.  The algorithm can of course be utilised for any special
subclass, such as groups or the quasigroups which encode Mendelsohn triple
systems. However, doing so will lose the guarantee of polynomial
average-case performance.

Miller \cite{Mil78} also described how an algorithm for 
labelling Latin squares can be used for testing isomorphism of nets and
finite projective planes and affine planes. In each case, the complexity is
at most some polynomial times the complexity of the Latin square labelling.
Hence, our algorithm from \sref{s:canonicalLS} can be used to
canonically label nets, affine planes and projective planes. If any of
the Latin squares used to build such an object has a long cycle
then our algorithm will run in polynomial time. However, we no longer
have a provable bound on the average-case complexity. The typical structure
of Latin squares that occur in affine planes is plausibly very different
to the structure of random Latin squares. At the very least it is not
reasonable to expect information about average cases when even the existence
question is far from settled.

\section{Discussion}\label{s:con}

In this paper we have introduced an algorithm for canonically labelling Latin
squares, and any other combinatorial object that can be encoded using Latin
squares.  In a first, we have been able to prove average-case polynomial
time-complexity for our algorithm.  When finding a canonical representative,
our algorithm will grow a partial labelling until it is a labelling. In the
process it sometimes makes choices that lead to branching in the search tree.
These choices are made every time \aref{a:branch} is called, which happens when
the partial labelling induces a Latin square.  As a consequence, a long chain
of nested subsquares can create sufficiently many branches that the complexity
becomes superpolynomial. Fortunately, such chains are rare in a relative sense.
However, the following  construction demonstrates that there are many species
of Latin squares that contain them.

Start with the Cayley table of the elementary abelian $2$-group of order $n$.
It can be tiled with a set $I$ of $n^2/4$ intercalates. It also has the
property that any sequence of choices for the labelling of a subsquare will
lead to a chain of nested subsquares of length $\log_2(n)$.  Now form a set $S$
of $2^{n^2/4}$ distinct Latin squares by turning any subset of the intercalates
in $I$ (to turn an intercalate, replace it by the other possible intercalate on
the same symbols). These squares all have the property that any sequence of
choices for the labelling of any subsquare will lead to a chain of nested
subsquares of length either $\log_2(n)$ or $\log_2(n)-1$. Also there are a lot
of different species represented in $S$.  Any species contains at most
$6(n!)^3=2^{O(n\log n)}$ different Latin squares, so there are at least
$2^{n^2(1/4-o(1))}$ species represented in $S$.

As already discussed, our algorithm sometimes performs poorly when given a Latin
square containing a long chain of nested subsquares. However, unlike the
algorithm of Colbourn and Colbourn \cite{CC80}, the existence of such a chain
does not necessarily imply poor performance. A Latin square may contain a
long chain of nested subsquares and also contain a long cycle, perhaps even
one of length $n$. As we have
shown, the existence of a long cycle gives polynomial time complexity.
For example, it is easy to construct a Latin
square that contains the Cayley table of an elementary abelian $2$-group as a
subsquare in its first $n/2$ rows, but has a row $r_{n/2+1}$ such that
$r_1 \cup r_{n/2+1}$ is a row cycle of length $n$.  Such a Latin square
contains a chain of nested subsquares of length $\log_2(n)$, but will be
labelled by our algorithm in polynomial time.

\begin{table}
\begin{center}
\begin{tabular}{|c|ccccc|}
\hline
Order&Min&Max&Mode&Mean&Std Dev.\\
\hline
10&0&23&13&12.2&3.05\\
15&3&51&18&19.0&3.98\\
20&0&48&26&25.8&4.74\\
25&9&68&32&32.6&5.40\\
30&10&68&39&39.4&5.99\\
35&19&92&45&46.2&6.54\\
40&16&89&53&53.0&7.03\\
45&24&102&60&59.8&7.50\\
50&28&110&66&66.6&7.94\\
\hline
\end{tabular}
\caption{\label{T:cyclendata}Number $H$ of Hamiltonian row cycles in
random Latin squares.}
\end{center}
\end{table}

On the basis of \cite{CGW08} it is reasonable to expect that the
probability of two given rows of a random $n\times n$ Latin square
producing a Hamiltonian cycle is asymptotically $e/n$ (which is the
asymptotic probability that a random derangement of $[n]$ will be an
$n$-cycle). Since there are $\binom{n}{2}$ pairs of rows, we might
then anticipate that the expected value of $H$, the number of
Hamiltonian row cycles in the whole square, would be asymptotic to
$e(n-1)/2$. To test this hypothesis we used the Jacobson-Matthews
Markov chain \cite{JM96} to generate one million Latin squares of each
of the orders 10, 15, 20, 25, 30, 35, 40, 45 and 50.  The resulting
values of $H$ are summarised in \Tref{T:cyclendata}, where we give the
minimum value, maximum value, mode, mean and standard deviation.  In
each case the mean value of $H$ agreed exactly with $e(n-1)/2$ to the
3 significant digit accuracy quoted. Also, in our sample there was
only one square of order greater than 10 that had $H=0$, and it had
order $20$. The data in \Tref{T:cyclendata} displays some bias towards
a higher maximum value of $H$ for odd orders compared to even
orders. This can be explained by an observation based on the parity of
row permutations within a set of three rows. If the order is odd then
all three pairs of rows within a set of three rows can produce
Hamiltonian cycles, whereas this is not possible when the order is
even. This effect will diminish for larger orders, where the probability
of more than one Hamiltonian cycle within any given set of three rows becomes
negligible.

\begin{figure}
  \centering
  \begin{tikzpicture}
    \begin{axis}
      [
        mark size = 1.0pt,
        scaled ticks = false,
        width = 0.7\textwidth,
        height = 0.5\textwidth,
      xmax = 115,
      xmin = 20,
      yticklabel style={/pgf/number format/1000 sep=},
      ymax = 52000,
      ymin = 0
      ]
      \addplot[
        color = black,
        fill = white,
        mark = square,
        only marks
        ]
        coordinates {
        ( 100, 17 )
        ( 101, 13 )
        ( 102, 6 )
        ( 103, 5 )
        ( 104, 2 )
        ( 106, 1 )
        ( 110, 1 )
        ( 28, 1 )
        ( 30, 1 )
        ( 31, 1 )
        ( 32, 3 )
        ( 33, 2 )
        ( 34, 1 )
        ( 35, 5 )
        ( 36, 16 )
        ( 37, 26 )
        ( 38, 64 )
        ( 39, 87 )
        ( 40, 134 )
        ( 41, 204 )
        ( 42, 326 )
        ( 43, 477 )
        ( 44, 698 )
        ( 45, 1059 )
        ( 46, 1475 )
        ( 47, 2125 )
        ( 48, 2921 )
        ( 49, 3989 )
        ( 50, 5426 )
        ( 51, 7221 )
        ( 52, 9068 )
        ( 53, 11502 )
        ( 54, 14528 )
        ( 55, 17604 )
        ( 56, 20985 )
        ( 57, 24839 )
        ( 58, 28627 )
        ( 59, 32704 )
        ( 60, 36510 )
        ( 61, 40249 )
        ( 62, 43687 )
        ( 63, 46340 )
        ( 64, 47873 )
        ( 65, 49571 )
        ( 66, 50254 )
        ( 67, 49961 )
        ( 68, 49338 )
        ( 69, 47419 )
        ( 70, 44991 )
        ( 71, 42313 )
        ( 72, 38970 )
        ( 73, 35482 )
        ( 74, 31676 )
        ( 75, 28153 )
        ( 76, 24255 )
        ( 77, 20674 )
        ( 78, 17524 )
        ( 79, 14530 )
        ( 80, 12088 )
        ( 81, 9775 )
        ( 82, 7827 )
        ( 83, 6100 )
        ( 84, 4695 )
        ( 85, 3509 )
        ( 86, 2754 )
        ( 87, 2143 )
        ( 88, 1493 )
        ( 89, 1091 )
        ( 90, 842 )
        ( 91, 554 )
        ( 92, 383 )
        ( 93, 290 )
        ( 94, 188 )
        ( 95, 112 )
        ( 96, 106 )
        ( 97, 62 )
        ( 98, 36 )
        ( 99, 18 )        
      };
    \end{axis}
  \end{tikzpicture}
  \caption{\label{f:plot50}Frequency plot of $H$ among random Latin squares of
  order $50$.}
\end{figure}

\fref{f:plot50} shows a plot of the results for our one million Latin
squares of order 50. The horizontal axis shows $H$ and the vertical
axis shows how many of the million squares achieved each $H$ value.
Similar behaviour was observed for the other orders.

On the basis of our numerical evidence, we advance this conjecture:

\begin{conjecture}
  Let $\ell(L)$ denote the length of the longest row cycle in a Latin
  square $L$. As $n\rightarrow\infty$, asymptotically almost all
  Latin squares of order $n$ have $\ell(L)=n$.
\end{conjecture}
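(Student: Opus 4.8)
The natural strategy is the second-moment method applied to $H$, the number of Hamiltonian row cycles. For $1\le i<j\le n$ let $X_{i,j}$ be the indicator that the derangement $\perm{i}{j}$ is a single $n$-cycle, so that $H=\sum_{i<j}X_{i,j}$ and $\ell(L)=n$ precisely when $H\ge 1$. The plan is to prove $\E H\to\infty$ together with $\mathrm{Var}(H)=o\big((\E H)^2\big)$, whence Chebyshev's inequality gives $\P[H=0]\le \mathrm{Var}(H)/(\E H)^2\to 0$ and the conjecture follows. By the row-exchangeability of a uniformly random Latin square, $\E H=\binom{n}{2}\,p_n$, where $p_n:=\P[\perm{1}{2}\text{ is an }n\text{-cycle}]$, so the first task is to control $p_n$.

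For the first moment it suffices to establish the lower bound $p_n=\Omega(1/n)$, since then $\E H=\Omega(n)\to\infty$, and matching the heuristic value $p_n\sim e/n$ is a bonus. Even the lower bound is not routine. The extension estimate \tref{t:extensions} compares the completion counts of two distinct two-row rectangles only up to a factor $\exp(O(n\log^2 n))$, and this degrades the uniform-rectangle value $(n-1)!/D_n\sim e/n$ (where $D_n$ is the number of derangements) into the useless lower bound $(e/n)\exp(-O(n\log^2 n))$: it is far too coarse to resolve a polynomial-scale probability. I would instead appeal to the refined cycle-structure analysis of Cavenagh, Greenhill and Wanless \cite{CGW08}, strengthening it as needed to extract a lower bound of the correct order $1/n$ for the probability of a single $n$-cycle.

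The heart of the proof is the second moment. Writing $\E[H^2]=\E H+\sum_{\text{share one}}\E[X_{i,j}X_{i,l}]+\sum_{\text{disjoint}}\E[X_{i,j}X_{k,l}]$, the diagonal term contributes $\E H=O(n)=o\big((\E H)^2\big)$, so everything reduces to the two families of covariances. For the $O(n^3)$ pairs of row-pairs sharing a single row I would show $\E[X_{i,j}X_{i,l}]=O(1/n^2)$, so that family sums to $O(n)$; for the $O(n^4)$ pairwise-disjoint pairs I would need the sharper statement $\mathrm{Cov}(X_{i,j},X_{k,l})=o(1/n^2)$ on average, so that this family also sums to $o(n^2)=o\big((\E H)^2\big)$. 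In probabilistic terms these are asymptotic-independence statements: the joint cycle structure of $\perm{1}{2}$ and $\perm{1}{3}$ (three rows) and of $\perm{1}{2}$ and $\perm{3}{4}$ (four rows) must factorise to leading order, the latter with a quantitative rate.

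The disjoint-pair covariance is the main obstacle. The refined tools behind the single-pair bound \eref{e:Pbnd} of \cite{CGW08} describe one pair of rows but say nothing about the joint law of two disjoint pairs, while the entropy and switching estimates available elsewhere are hopelessly coarse at the required precision. The realistic route is to adapt the high-precision moment machinery developed for intercalates by Kwan, Sah and Sawhney \cite{KSS21} to the global statistic $H$; this is genuinely harder than the local case, because a Hamiltonian row cycle spans the entire square and so cannot be treated by the bounded-support switchings that make intercalate moments tractable. Pushing the four-row covariance down to $o(1/n^2)$ — rather than the trivial $O(1/n)$ furnished by $\mathrm{Cov}(X_{i,j},X_{k,l})\le \E[X_{i,j}X_{k,l}]\le p_n$ — is where essentially all the difficulty lies.
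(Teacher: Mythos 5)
The statement you were asked to prove is one of the paper's \emph{conjectures}: the paper offers no proof of it at all, only numerical evidence (one million Jacobson--Matthews samples per order, summarised in \Tref{T:cyclendata} and \fref{f:plot50}, with the heuristic $\E H\sim e(n-1)/2$). So there is no proof in the paper to compare against, and any correct argument here would be new research. Your submission, however, is not such an argument: it is a plan in which every load-bearing estimate is deferred. The outer logic (Chebyshev: $\E H\to\infty$ and $\mathrm{Var}(H)=o((\E H)^2)$ imply $H\ge1$ a.a.s.) is fine and is certainly the natural frame, but none of the three quantitative inputs is established. For the first moment you correctly observe that \tref{t:extensions} is uselessly coarse at polynomial scale, but your fallback --- ``strengthening as needed'' the analysis of \cite{CGW08} --- is a hope, not a step: the tool that paper provides (Corollary 4.5, quoted here as \eref{e:Pbnd}) is an \emph{upper} bound on the probability of a prescribed cycle structure, and no lower-bound technology of the required precision $p_n=\Omega(1/n)$ exists in that reference or anywhere else cited. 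The three-row estimate $\E[X_{i,j}X_{i,l}]=O(1/n^2)$ and, above all, the four-row covariance bound $\mathrm{Cov}(X_{i,j},X_{k,l})=o(1/n^2)$ are likewise stated as targets with no argument; you yourself concede the last one is ``where essentially all the difficulty lies,'' and your suggestion to adapt the machinery of \cite{KSS21} comes with the accurate caveat that Hamiltonian row cycles have global support and so defeat the bounded-support switching arguments that make intercalate moments tractable.

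In short: you have correctly diagnosed why this is hard, but a proof whose every key inequality is prefixed by ``I would show'' is a research proposal, not a proof. This is consistent with the status of the statement in the paper --- the authors stop at a conjecture precisely because estimates of this precision for random Latin squares are beyond present methods (the best rigorous lower bound on $\ell(L)$, their \tref{t:longcyc}, is only $n^{1/2}\log^{1-\eps}n$, far short of $n$). To turn your outline into a proof you would need, at minimum, a genuinely new lower-bound method for two-row cycle statistics in random Latin squares and a quantitative joint-independence result for disjoint row pairs; neither is supplied.
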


Furthermore, it seems likely that there will typically be a linear
number of Hamiltonian cycles amongst the pairs of rows of $L$. If this
is true then the algorithm we have described will have $O(n^4)$
average-case time complexity, improving on \tref{t:polycanonical} by a
factor of $n$. In practice, we do not need to use the longest
cycles. There could be good reasons to use shorter cycles if there is
a cycle length which occurs but is uncommon.  When finding the lengths
of all cycles in a Latin square, it is a simple matter to count how
many cycles of each length occur. By choosing a length that is
uncommon but still long enough to exceed the order of any subsquare,
we might obtain an algorithm that has $O(n^3)$ average-case time
complexity. We could not do better with this approach because it takes
cubic time to find all the cycle lengths. However, to prove the
$O(n^3)$ bound we would need to show that with high probability there
is some cycle length which occurs but does not occur many times. This
seems plausible but beyond present methods of proof.

There is another adjustment to our algorithm which does not affect our
analysis but is likely to improve the practical running time. For
simplicity, we have concentrated throughout on row cycles, but it is
well known that Latin squares also possess column cycles and symbol
cycles. By running on a conjugate Latin square if necessary, our
algorithm could be adjusted to use whichever is most favourable out of
the row, column and symbol cycles. The lengths of these different
types of cycles can vary considerably.  It is possible for a Latin
square to have no Hamiltonian row cycles while the row cycles in one
of its conjugates are all Hamiltonian \cite{AW22}.

\subsection*{Acknowledgements}

The authors are very grateful for feedback from Matthew Kwan, Brendan
McKay, G\'abor Nagy, Adolfo Piperno, Pascal Schweitzer and Petr
Vojt\v{e}chovsk\'y.

 
  \let\oldthebibliography=\thebibliography
  \let\endoldthebibliography=\endthebibliography
  \renewenvironment{thebibliography}[1]{%
    \begin{oldthebibliography}{#1}%
      \setlength{\parskip}{0.2ex}%
      \setlength{\itemsep}{0.2ex}%
  }%
  {%
    \end{oldthebibliography}%
  }

\end{document}